\newcommand\sizefigure{0.40}
\renewcommand{\div}{\operatorname{div}}
\newcommand{\Rr}{{\mathbb{R}}}
\newcommand{\Nn}{{\mathbb{N}}}
\newcommand{\Tt}{{\mathbb{T}}}
\newcommand{\Hh}{{\overline{H}}}
\newcommand{\bx}{{\bf x}}
\newcommand{\bv}{{\bf v}}
\newcommand{\epsi}{\epsilon}
\def\d{{\rm d}}
\def\dx{{\rm d}x}
\def\dt{{\rm d}t}
\def\leq{\leqslant}
\def\geq{\geqslant}
\numberwithin{equation}{section}
\newtheoremstyle{thmlemcorr}{10pt}{10pt}{\itshape}{}{\bfseries}{.}{10pt}{{\thmname{#1}\thmnumber{
#2}\thmnote{ (#3)}}}
\newtheoremstyle{thmlemcorr*}{10pt}{10pt}{\itshape}{}{\bfseries}{.}\newline{{\thmname{#1}\thmnumber{
\newtheoremstyle{defi}{10pt}{10pt}{\itshape}{}{\bfseries}{.}{10pt}{{\thmname{#1}\thmnumber{
#2}\thmnote{ (#3)}}}
\newtheoremstyle{remexample}{10pt}{10pt}{}{}{\bfseries}{.}{10pt}{{\thmname{#1}\thmnumber{
#2}\thmnote{ (#3)}}}
\newtheoremstyle{ass}{10pt}{10pt}{}{}{\bfseries}{.}{10pt}{{\thmname{#1}\thmnumber{
A#2}\thmnote{ (#3)}}}
\theoremstyle{thmlemcorr}
\newtheorem{theorem}{Theorem}
\numberwithin{theorem}{section}
\newtheorem{proposition}[theorem]{Proposition}
\theoremstyle{thmlemcorr*}
\newtheorem{theorem*}{Theorem}
\newtheorem{lemma*}[theorem]{Lemma}
\newtheorem{corollary*}[theorem]{Corollary}
\newtheorem{proposition*}[theorem]{Proposition}
\newtheorem{problem*}[theorem]{Problem}
\newtheorem{conjecture*}[theorem]{Conjecture}
\theoremstyle{defi}
\newtheorem{definition}[theorem]{Definition}
\newtheorem{hyp}{Assumption}
\newtheorem{claim}{Claim}
\newtheorem{problem}{Problem}
\theoremstyle{remexample}
\newtheorem{remark}[theorem]{Remark}
\newtheorem{teo}[theorem]{Theorem}
\newtheorem{lem}[theorem]{Lemma}
\newtheorem{pro}[theorem]{Proposition}
\newtheorem{cor}[theorem]{Corollary}
\theoremstyle{ass}
\begin{document}

\title[Selection for MFGs]{ The selection problem for some first-order stationary Mean-field games}

\author{Diogo A. Gomes}
\address[D. A. Gomes]{
        King Abdullah University of Science and Technology (KAUST), CEMSE Division, Thuwal 23955-6900. Saudi Arabia, and  
        KAUST SRI, Center for Uncertainty Quantification in Computational Science and Engineering.}
\email{diogo.gomes@kaust.edu.sa}

\author{Hiroyoshi Mitake}
\address[H. Mitake]{
        Graduate School of Mathematical Sciences, University of Tokyo, 3-8-1 Komaba, Meguro-ku, Tokyo, 153-8914, Japan.}
\email{mitake@ms.u-tokyo.ac.jp}

\author{Kengo Terai}
\address[K. Terai]{
        Graduate School of Mathematical Sciences, University of Tokyo, 3-8-1 Komaba, Meguro-ku, Tokyo, 153-8914, Japan.}
\email{terai@ms.u-tokyo.ac.jp}

\keywords{Mean field games; Hamilton-Jacobi equation; selection problem; vanishing discount}
\subjclass[2010]{
        35A01, 
        49L25,  	
        91A13.  
    }

\thanks{
        D. Gomes was partially supported by King Abdullah University of Science and Technology (KAUST) baseline funds and KAUST OSR-CRG2017-3452.
        H. Mitake was partially supported by the JSPS grants: KAKENHI \#19K03580, \#19H00639, \#16H03948, \#17KK0093.        
        K. Terai was supported by King Abdullah University of Science and Technology (KAUST) through the Visiting Student Research Program (VSRP)
}
\date{\today}

\begin{abstract}

Here, we study the existence and the convergence of solutions for the vanishing discount MFG problem with a quadratic Hamiltonian. We give conditions under which the 
discounted problem has a unique classical solution and prove convergence 
of the vanishing-discount limit to a unique solution up to constants.
Then, we establish refined asymptotics for the limit. 
When those conditions do not hold, the limit problem may not have a unique solution
and its solutions may not be smooth,
as we illustrate in an elementary example. 
Finally, we investigate the stability of regular weak solutions and 
address the selection problem. Using ideas from Aubry-Mather theory, we establish a selection criterion for the limit.
\end{abstract}

\maketitle

\section{Introduction}



Mean-field games (MFG) model systems with many rational noncooperative players,
describe the player's optimal strategies and determine the statistical properties of their distribution.
These games are often determined by a system of a Hamilton-Jacobi equation coupled
with a transport or Fokker-Planck equation. 
In the study of stationary Hamilton-Jacobi equations, 
a standard method
to obtain a solution is to consider the vanishing discount problem. 
This was the strategy used originally in \cite{LPV} in the study of homogenization problems. 
For second-order MFG,
the existence of a solution 
for the discounted problem was shown, for example, in \cite{GMit} and \cite{EvGom} and for first-order
MFG in \cite{FG2}  in the sense of weak solutions and in \cite{GraCard} using variational methods.  
In the second-order case, the vanishing discount limit was studied 
in \cite{CardPorrLongTimeME}. 
In the first-order case, the theory is not as much developed and the vanishing discount limit has not 
been examined previously.   
Here, our goal is to 
study the limit behavior as $\epsilon\to 0$ of 
the following discounted first-order stationary
mean-field game.
\begin{problem}
\label{P1}
Let $\Tt^d$ be the $d$-dimensional flat torus identified with $[0,1]^d$. 
Let
$V:\Tt^d \to \Rr$, 
$V\in C^{1,\alpha}(\Tt^d)$,  $g:[0,\infty) \to \Rr\cup\{-\infty\}$, {$g\in C^{1,\alpha}((0,+\infty))$},
  with $g$ strictly increasing, and fix a discount rate, $\epsi>0$.  
 Find $u^\epsi, m^\epsi :\Tt^d \to \Rr$ with $m^\epsi (x)\geq
0$
such that 
\begin{equation}\label{DP}
        \begin{cases}
                &\epsi u^\epsi+\frac{1}{2}|Du^\epsi|^2+V(x)=g(m^\epsi) \quad\rm{in}\  \Tt^d, \\
                &\epsi m^\epsi -\mathrm{div}(m^\epsi Du^\epsi) = \epsi \quad \rm{in} \ \Tt^d. \\
        \end{cases}
\end{equation}
\end{problem}

We say that $(u^\epsilon, m^\epsilon)$ is a classical solution of the preceding problem
if {$u^\epsilon \in C^{2,\alpha}(\Tt^d)$ and $m^\epsilon \in C^{1,\alpha}(\Tt^d)$} with $m^\epsilon\geq 0$.  As 
 we show in Proposition \ref{density}, $m^\epsilon$ cannot vanish, hence, $m^\epsilon>0$. 
As in the case of Hamilton-Jacobi equations, 
we expect that, as 
$\epsilon\to 0$, the solutions of \eqref{DP}
converge, maybe through subsequences after adding a suitable constant to $u^\epsilon$, to a solution of 
 the following first-order MFG. 
 \begin{problem}
 \label{P2}
With $g$, $V$ as in Problem \ref{P1}, find
 	$u, m:\Tt^d \to \Rr$ with $m\geq
 	0$ and $\Hh\in \Rr$ such that 
\begin{equation}\label{CP}
        \begin{cases}
                &\frac{1}{2}|Du|^2+V(x)=g(m)+\bar{H} \quad\text{in}\  \Tt^d, \\
                & -\mathrm{div}(mDu) = 0 \quad \text{in} \ \Tt^d, \\
                & m(x)\geq 0,\ \ \int_{\Tt^d} m dx=1.
        \end{cases}
\end{equation}
\end{problem} 


Because \eqref{CP} is invariant under addition of constants to $u$, we can prescribe the additional normalization condition
\begin{equation}
\label{nu}
\int_{\Tt^d}u d x=0.  
\end{equation}

According to \cite{FG2} (also see \cite{FGT1}), Problem \ref{P2} admits weak solutions
under suitable polynomial growth conditions of $g$, see Corollary 6.3 in \cite{FG2}. Here, in Section
\ref{selection}, under a 
different set of hypothesis
and using a limiting
argument, we establish the existence of solutions for Problem \ref{P2}. 
A natural question in the analysis of the limit $\epsi\to 0$ is the {\em selection problem}; that is,  whether the sequence 
$(u^\epsi, m^\epsi)$ converges (not just whether a subsequence converges) and if so, what is the limit
among all possible solutions of \eqref{CP}. 
This matter is our main focus here. 

For Hamilton-Jacobi equations,  
the discounted problem corresponds to the following control problem. 
 Let ${\bf x}(t) \in \Rr^d$ be the state of an agent at the time $t$. This agent can change its state by choosing a control ${\bf v} \in L^{\infty}([0,\infty),\Rr^d)$. Thus, its trajectory,
  $\bf{x}(t)$,  is determined by  $\dot{\bf{x}}\rm(t)=\bf{v}\rm(t)$, with initial condition $\bx(t)=x\in \Tt^d$.
  The agent selects the control to minimize the cost functional 
\begin{equation*}
J(x; \bv)= \int^\infty_0 e^{-\epsi t}L({\bf x}(t),\dot{{\bf x}}(t)) \,\dt,
\end{equation*}
for a given Lagrangian, $L:\Tt^d \times \Rr^d \to \Rr$.  
 The  {\em value function}, $u^\epsi$, is given by
\begin{equation*}
u^\epsi(x)=\inf_{\bf{v}} J(x; \bv), 
\end{equation*}
where the infimum is taken over  $\bv\in L^\infty([0, +\infty), \Rr^d)$. 

The 
 {\em  Hamiltonian},\ $H:\Tt^d \times \Rr^d \to \Rr$, corresponding to this control problem is the Legendre transform of $L$;  that is, \[H(x,p)=\sup_{v \in \Rr^d}-p\cdot v-L(x,v).\]  
Under standard coercivity and convexity assumptions on $L$,
 $u^\epsi$ is the unique viscosity solution of the discounted Hamilton-Jacobi equation, 
\begin{equation}\label{D}
\epsi u^\epsi +H(x,Du^\epsi)=0 \quad\rm{in}\  \Tt^d.
\end{equation}
For coercive Hamiltonians, the results in \cite{LPV}  give that 
$\epsi u^\epsi$ is uniformly bounded and that $u^\epsilon$ is equi-Lipschitz for $\epsi>0$. Thus, $u^\epsi-\min_{\Tt^d}u^\epsi$ uniformly converges to a function, $u$, along subsequences, as $\epsi \to 0$. Moreover, $\epsi u^\epsi$ converges to a constant $-\tilde{H}$. By stability of viscosity solutions, $(u,\tilde{H})$ solves
the ergodic Hamilton-Jacobi equation
\begin{equation}\label{E}
H(x,Du)=\tilde{H}  \quad\rm{in}\  \Tt^d, 
\end{equation}
where the unknowns are $u:\Tt^d \to \Rr$ and $\tilde{H} \in \Rr$.
 However, the solution of \eqref{E} may not be unique. Hence, the solution constructed above could depend on the particular subsequence used to extract the limit.
 The study of the selection problem was started in \cite{MR2458239}
 using the discounted Mather measures introduced in \cite{CDG}. The main convergence
 result was established in \cite{MR3556524}. 
{Subsequently, several authors investigated 
 and extended those ideas in \cite{MR3475295}, \cite{MR3670619},
  \cite{MR3682741}, and \cite{MR3581314} .}
 Recently, the case of non-convex Hamiltonians was addressed in \cite{2016arXiv160507532G}.

In MFGs, we consider a large population of agents where 
each agent seeks to optimize an objective function. Here, however, the running cost depends on statistical information about the players, encoded in a probability density,  $m:\Tt^d\times [0,\infty)\to \Rr$. In the model discussed here, the Lagrangian is 
 $\hat{L}(x,p)=\frac{1}{2}|p|^2-V(x)+g(m)$
 and each agent seeks to 
  minimize the functional
\begin{equation*}
\hat{J}(x)= \int^\infty_0 e^{-\epsi t}\left[ \frac{1}{2}|\dot{{\bf x}}(t)|^2-V({\bf x}(t))+g\big(m({\bf x}(t),t)\big)\right] \,\dt.
\end{equation*}
Now, we suppose that the value function, $u^\epsi:=\inf_{{\bf v}}\hat{J,}$ is smooth.
Then, $u^\epsi$ solves the first equation in \eqref{DP} and the optimal control is given by ${\bf v}(t)=-Du^\epsi({\bf x}(t))$.
Because the players are rational, they use this optimal control. Here, $\epsi$ represents the rate at which players quit the game, which occurs at independent and
memoryless times. Furthermore, new players join the game randomly at a rate $\epsi$, as can be seen by looking at the right-hand side of the second equation in \eqref{DP}. Then, in the stationary configuration, the density, $m$, is determined by the second equation in \eqref{DP}. Without an inflow of players, the only non-negative solution is trivial, $m=0$.








The theory for second-order stationary MFG is now well developed 
and in many cases the existence of smooth solutions can be established, 
see for example
\cite{GM},
\cite{GPatVrt},
\cite{PV15},
or
\cite{EvGom}.
For logarithmic nonlinearities, 
the existence of smooth solutions was proven in 
\cite{evans2003some}. However, this is a special case; 
as shown in Section \ref{LU}, 
for first-order MFG, the existence of smooth solutions may not hold (see  also a detailed discussion in \cite{Gomes2016b} and \cite{GNPr216}). 
Thus, 
in general, we need to consider weak solutions, see \cite{EFGNV2017} or \cite{FG2}
for an approach using monotone operators and \cite{GraCard} for a variational approach. 

One of the difficulties of first-order stationary MFG is the lack of regularizing terms in both the Hamilton-Jacobi equation and in the transport equation.  Nonetheless, the MFG system behaves somewhat like an elliptic equation. Here, we explore this effect and obtain conditions under which 
Problem \ref{P1} has classical solutions. These conditions are given in the following two assumptions.

\begin{hyp}\label{osc}
 $g$ and $V$ satisfy that $g^{-1}\big(g(1)-\mathrm{osc}_{x \in \Tt^d}V(x)\big)>0$.
\end{hyp}

\begin{hyp}\label{beta}
There exist constants $C_1>0$, $C_2>0$ and $\beta \in \Rr$ such that for all $z \geq 0$,
\[ g'(z)\geq C_1z^\beta, \]
\[g(z)\leq C_2+zg(z).\]
\end{hyp}
An example that satisfies the preceding assumptions is the following: 
\[g(m)=m^\alpha \:\:(\alpha>0), \:\: V(x)=c\sin(2\pi x)\,\,(0<c<1/2),\]
{where $d=1$ and $V$ is extended by periodicity to $\Rr$.} 
The preceding two assumptions are used to obtain lower bounds on the density and can be interpreted as follows. Because $g$ is increasing agents want to avoid crowded areas and prefer areas with low density. 
However, if the oscillation of the potential is large, the trade-off between a low-density area with high potential and a high-density area with low potential may not pay-off. Hence, the control of
the oscillation of $V$ given in Assumption \ref{osc} implies that no point is totally avoided by the agents. 

As we mentioned previously, 
the two preceding assumptions imply the existence of a classical solution for Problem \ref{P1} as stated in the following theorem. 
\begin{teo}\label{result1}
Suppose that Assumptions \ref{osc} and \ref{beta} hold. Then, for each $\epsi>0$,  Problem \ref{P1} has a unique classical solution $(u^\epsi,m^\epsi)$ with $m^\epsi>0$.
\end{teo}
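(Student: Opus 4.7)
The plan is to handle uniqueness by the standard Lasry--Lions monotonicity argument and to establish existence by a vanishing-viscosity approximation, obtaining uniform-in-$\delta$ estimates with the help of Assumptions \ref{osc} and \ref{beta} and then passing to the limit.

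\textbf{Uniqueness.} Let $(u_1,m_1)$ and $(u_2,m_2)$ be two classical solutions. I would subtract the two Hamilton--Jacobi equations, multiply by $m_1-m_2$ and integrate over $\Tt^d$, and likewise subtract the two Fokker--Planck equations, multiply by $u_1-u_2$, and integrate by parts (no boundary terms since we are on $\Tt^d$). Combining the two identities and using the pointwise algebraic identity
\[
\tfrac12\bigl(|Du_1|^2-|Du_2|^2\bigr)(m_1-m_2)-(m_1 Du_1-m_2 Du_2)\cdot (Du_1-Du_2) = -\tfrac12 (m_1+m_2)|Du_1-Du_2|^2,
\]
one obtains
\[
\tfrac12\int_{\Tt^d}(m_1+m_2)|D(u_1-u_2)|^2\,\dx+\int_{\Tt^d}\bigl(g(m_1)-g(m_2)\bigr)(m_1-m_2)\,\dx=0.
\]
Both integrands are non-negative (strict monotonicity of $g$), so they both vanish. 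Hence $m_1=m_2$, and Proposition \ref{density} gives $m_1+m_2>0$, so $Du_1\equiv Du_2$; the discount term $\epsi(u_1-u_2)$ in the HJ equation then forces $u_1=u_2$.

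\textbf{Existence.} For $\delta>0$, I would consider the regularised second-order system
\[
\epsi u-\delta\Delta u+\tfrac12|Du|^2+V=g(m),\qquad \epsi m-\delta\Delta m-\mathrm{div}(m Du)=\epsi\qquad\text{in }\Tt^d.
\]
Existence of a classical solution $(u^{\epsi,\delta},m^{\epsi,\delta})$ with $m^{\epsi,\delta}>0$ is obtained by standard second-order MFG techniques, adapting the arguments of \cite{EvGom} and \cite{GMit} to the source term $\epsi$ on the right of the Fokker--Planck equation. The core task is then to establish a priori bounds that are uniform in $\delta$ (at fixed $\epsi$): the $L^1$ normalisation $\int_{\Tt^d} m^{\epsi,\delta}\,\dx=1$ follows by integrating the FP equation; a strictly positive lower bound $m^{\epsi,\delta}\geq c(\epsi)>0$ is produced by the argument behind Proposition \ref{density}, which uses Assumption \ref{osc}; an $L^\infty$ bound on $m^{\epsi,\delta}$ (and hence on $g(m^{\epsi,\delta})$) and a control of $u^{\epsi,\delta}$ are obtained by energy estimates, multiplying the HJ equation by $m^{\epsi,\delta}$ and the FP equation by $u^{\epsi,\delta}$, integrating, and invoking Assumption \ref{beta}; finally, a Bernstein-type computation on the HJ equation produces a Lipschitz bound on $u^{\epsi,\delta}$, after which Schauder theory upgrades everything to uniform $C^{2,\alpha}\times C^{1,\alpha}$ bounds. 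Passing to the limit $\delta\to 0$ along a subsequence via Arzel\`a--Ascoli yields a classical solution of Problem \ref{P1}, and the uniqueness part promotes the subsequential limit to a genuine limit.

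\textbf{Main obstacle.} The principal difficulty is the uniform strict positivity of $m^{\epsi,\delta}$, which is exactly where Assumption \ref{osc} enters: without a bound on $\mathrm{osc}_{\Tt^d} V$, agents could concentrate too aggressively away from peaks of $V$, causing $m$ to degenerate and destroying the regularity needed for a classical solution of the first-order HJ equation. Assumption \ref{beta} plays a complementary but technically essential role by providing the growth control on $g$ needed both to close the energy estimates and to translate an $L^1$ bound on $g(m^{\epsi,\delta})$ into an $L^\infty$ bound on $m^{\epsi,\delta}$. Once these two uniform bounds are available, the rest of the argument is fairly standard bootstrap and compactness.
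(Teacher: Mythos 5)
Your uniqueness argument is correct and coincides with the paper's (the Lasry--Lions identity \eqref{identity}, positivity of $m^\epsi$ from Proposition \ref{density}, and the discount term forcing $u_1=u_2$). The existence half, however, has a genuine gap: the uniform-in-$\delta$ estimates you assert do not follow from the arguments you cite, because the paper's key estimates use the first-order structure in an essential way. Specifically, the uniform lower bound $m\geq c>0$ is Proposition \ref{lbd}, not Proposition \ref{density} (which only gives non-vanishing, with no quantitative bound); its proof evaluates the first-order Fokker--Planck equation at a minimum point $\tilde x$ of $u$, where $Du(\tilde x)=0$ yields $m(\tilde x)=\epsi/(\epsi-\Delta u(\tilde x))\geq 1$, and then feeds this into the pointwise identity $g(m(x))=\epsi u(x)+\frac{1}{2}|Du(x)|^2+V(x)$. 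In your regularised system, the FP equation at $\tilde x$ carries the extra term $-\delta\Delta m(\tilde x)$, whose sign is unknown ($\tilde x$ is a critical point of $u$, not of $m$), and the HJ identity for $g(m(x))$ acquires the signless term $\delta\Delta u(x)$; neither step survives, and Assumption \ref{osc} alone does not repair this. Likewise, ``Schauder theory upgrades everything to uniform $C^{2,\alpha}\times C^{1,\alpha}$ bounds'' is not available: Schauder constants for the $\delta$-viscous equations blow up as $\delta\to 0$, and a Bernstein computation on the HJ equation alone does not close, since it requires control of $D g(m^{\epsi,\delta})$, which you do not have.

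The missing idea is the paper's observation that the first-order system is itself uniformly elliptic once one solves the HJ equation for $m=g^{-1}\left(\epsi u+\frac{1}{2}|Du|^2+V\right)$ and substitutes into the FP equation, obtaining the single quasilinear equation \eqref{EL}; its ellipticity \eqref{elliptic} comes from $g'>0$ together with the bounds of Propositions \ref{lbd} and \ref{ubd}, not from any added viscosity. The paper then runs a continuation method in the parameter $\lambda$ multiplying $V$: closedness of the set $\Lambda$ follows from the DeGiorgi--Nash--Moser estimates of Sections \ref{esti} and \ref{esti2} applied to \eqref{EL}, and openness follows from the implicit function theorem via the isomorphism property of the linearised operator (Lemma \ref{isolem}), with uniqueness again from \eqref{identity}. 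If you wish to retain a vanishing-viscosity scheme, you would have to reprove the analogues of Propositions \ref{lbd} and \ref{ubd} with the extra $\delta$-terms and $\delta$-independent constants; that is precisely the part your proposal asserts but does not supply.
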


The proof of this theorem is given in Section \ref{pthm} using a continuation method combined with the a priori estimates in Section \ref{esti} and the DeGiorgi-Nash-Moser argument outlined in Section \ref{esti2}. 
As a corollary of the preceding theorem, we obtain our first convergence result.  
\begin{cor}\label{cor1}
Suppose that Assumptions \ref{osc} and \ref{beta} hold. Then,  Problem \ref{P2} has a unique classical solution $(u,m,\bar H)$, with $m>0$  and $\int_{\Tt^d} u dx=0$. Furthermore, 
let  $(u^\epsi,m^\epsi)$ solve Problem \ref{P1}. Then
\[u^\epsilon -\int_{\Tt^d} u^\epsilon \mbox{ }dx \to u \:\:\: \text{in}\  C^{2, \alpha}(\Tt^d), \quad m^\epsilon \to m \:\:\: \text{in}\  C^{1, \alpha}(\Tt^d), \quad \epsilon u^\epsilon \to -\Hh  \:\:\: \text{uniformly. } \]


\end{cor}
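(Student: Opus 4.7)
The plan is to combine Theorem \ref{result1} with a standard compactness--limit--uniqueness scheme. If the a priori bounds underlying Theorem \ref{result1} are uniform in $\epsi$ down to $\epsi=0$, then Arzel\`a--Ascoli produces subsequential limits, these limits are classical solutions of Problem \ref{P2}, and uniqueness for Problem \ref{P2} upgrades subsequential convergence to convergence of the whole family.

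First, I would check that the estimates of Sections \ref{esti} and \ref{esti2} are uniform in $\epsi\in(0,\epsi_0]$: (i) $\epsi\|u^\epsi\|_\infty \le C$; (ii) $\|u^\epsi-\int_{\Tt^d} u^\epsi\,\dx\|_{C^{2,\alpha}(\Tt^d)}\le C$; (iii) $\|m^\epsi\|_{C^{1,\alpha}(\Tt^d)}\le C$; (iv) $m^\epsi(x)\ge \delta>0$ with $\delta$ independent of $\epsi$. Items (i)--(iii) come from the DeGiorgi--Nash--Moser bounds of Section \ref{esti2} combined with Schauder estimates for the first equation in \eqref{DP}, while (iv) is the lower bound of Proposition \ref{density}, whose proof uses Assumptions \ref{osc} and \ref{beta} in a way that does not require smallness of $\epsi$. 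Integrating the second equation of \eqref{DP} over $\Tt^d$ also yields $\int_{\Tt^d} m^\epsi\,\dx=1$ for every $\epsi>0$.

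Second, fix $\alpha'\in(0,\alpha)$ and use the compact embeddings $C^{2,\alpha}\hookrightarrow C^{2,\alpha'}$ and $C^{1,\alpha}\hookrightarrow C^{1,\alpha'}$ to extract a sequence $\epsi_k\downarrow 0$ such that
\[
u^{\epsi_k}-\int_{\Tt^d} u^{\epsi_k}\,\dx \longrightarrow u \ \text{in } C^{2,\alpha'}(\Tt^d), \qquad m^{\epsi_k}\longrightarrow m \ \text{in } C^{1,\alpha'}(\Tt^d),
\]
and, by (i), $\epsi_k\int_{\Tt^d} u^{\epsi_k}\,\dx \to -\Hh$ for some $\Hh\in\Rr$. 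The decomposition $\epsi_k u^{\epsi_k}=\epsi_k\bigl(u^{\epsi_k}-\int_{\Tt^d} u^{\epsi_k}\,\dx\bigr)+\epsi_k\int_{\Tt^d} u^{\epsi_k}\,\dx$ combined with (ii) shows $\epsi_k u^{\epsi_k}\to -\Hh$ uniformly. Passing to the limit termwise in \eqref{DP}---admissible since $m\ge \delta$ keeps us in the smooth range of $g$---identifies $(u,m,\Hh)$ as a classical solution of Problem \ref{P2} with $m\ge\delta>0$, $\int_{\Tt^d} m\,\dx=1$, and $\int_{\Tt^d} u\,\dx=0$.

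Third, I would prove uniqueness of classical solutions of Problem \ref{P2} with $m>0$ and $\int_{\Tt^d} u\,\dx=0$. The argument exploits the rigidity of the first-order continuity equation on $\Tt^d$: multiplying $-\div(mDu)=0$ by $u$ and integrating by parts (no boundary terms on the torus) yields $\int_{\Tt^d} m|Du|^2\,\dx=0$, so $Du\equiv 0$ and, by the normalization, $u\equiv 0$. The HJB equation in \eqref{CP} then reduces to $g(m)=V-\Hh$, whence $m=g^{-1}(V-\Hh)$; the mass constraint $\int_{\Tt^d} g^{-1}(V-\Hh)\,\dx=1$ is strictly monotone in $\Hh$ because $g^{-1}$ is strictly increasing, which fixes $\Hh$. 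As every subsequential limit coincides with this unique $(u,m,\Hh)$, the whole family converges.

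The main obstacle is step one: verifying that the constants in the estimates of Sections \ref{esti}--\ref{esti2} remain bounded as $\epsi\to 0$, especially the lower bound (iv), which is what keeps $g(m^\epsi)$ away from the possibly singular regime of $g$ and legitimizes pointwise passage to the limit in \eqref{DP}. Once this uniformity is secured, the remaining compactness, limit identification, and the transport-based uniqueness argument are standard.
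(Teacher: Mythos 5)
Your proposal is correct, and its first two steps coincide with the paper's proof of Corollary \ref{cor1}: the paper likewise observes that the estimates of Sections \ref{esti} and \ref{esti2} are $\epsi$-independent, extracts a subsequence along which $\epsi_j u^{\epsi_j}\to -\Hh$ uniformly and $(u^{\epsi_j}-\int_{\Tt^d}u^{\epsi_j}dx, m^{\epsi_j})$ converges, and identifies the limit as a classical solution of \eqref{CP}. Where you genuinely diverge is the uniqueness step. The paper invokes the Lasry--Lions identity \eqref{identity} from Section \ref{llm}, which yields that $m$ and $\Hh$ are unique among \emph{all} classical solutions and that $Du_1=Du_2$ on $\{m>0\}$; since the limit solution has $m>0$ by Proposition \ref{lbd}, every classical solution then shares this positive $m$, and $u$ is unique after normalization. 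You instead exploit the rigidity of the continuity equation on the torus: testing $-\div(mDu)=0$ with $u$ gives
\begin{equation}
\int_{\Tt^d} m\,|Du|^2\,dx=0,
\end{equation}
so any classical solution with $m>0$ has $u$ constant, $m=g^{-1}(V-\Hh)$, and $\Hh$ fixed by the strictly monotone mass constraint. Your route buys more in one direction --- it identifies the vanishing-discount limit \emph{explicitly} as the trivial solution with constant $u$, which is consistent with (and sharpens) the paper's remark in Section \ref{aee} that $u$ is constant on $\{m>0\}$ --- while the Lasry--Lions identity buys uniqueness in the larger class where $m$ may vanish: your computation alone only gives $Du=0$ on $\{m>0\}$ for such a competitor, so to recover the corollary's uniqueness assertion in full strength you should either append the identity \eqref{identity} or note that under Assumption \ref{osc} (e.g., when $g(0^+)=-\infty$) the Hamilton--Jacobi equation precludes $m$ from vanishing.

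Two small repairs. First, the uniform lower bound in your item (iv) is Proposition \ref{lbd}, which is where Assumption \ref{osc} enters; Proposition \ref{density} gives only the non-quantitative positivity of each $m^\epsi$ via the minimum principle. Second, your extraction yields convergence in $C^{2,\alpha'}$ for $\alpha'<\alpha$ only --- a gloss the paper's own proof also makes --- and upgrading to the stated $C^{2,\alpha}$ convergence requires an extra step, e.g., a Schauder estimate applied to the linear elliptic equation satisfied by the difference of solutions of \eqref{EL}, whose data tend to zero in the lower H\"older norms already obtained.
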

The proof of this corollary is given at the end of Section \ref{pthm}.

For second-order MFGs, the vanishing discount problem for mean-field games was addressed in \cite{CardPorrLongTimeME}. 
Inspired by the approach there, we consider
the following formal asymptotic expansion 
\begin{equation}
\label{aeeexp}
u^\epsi -\bar{H}/\epsi\sim u+\lambda+\epsi v, \qquad m^\epsi \sim m+\epsi \theta
\end{equation}
for the solution of Problem \ref{P1}. Using this expansion 
in \eqref{DP}, assuming that $(u,m,\lambda)$ solves Problem \ref{P2},  and 
matching powers of $\epsi$, we obtain the following problem that determines the
terms $\lambda$, $v$,  and $\theta$ in \eqref{aeeexp}. {To simplify the presentation, we discuss the case of $C^\infty$- solutions.}
\begin{problem}\label{P4} { Let $g$ be as in Problem \ref{P1} with $g\in C^\infty$} and let $(u,m)$
	be $C^\infty$- solutions of 
	 Problem \ref{P2} with $m>0$ and $\int u=0$. Find $v, \theta:\Tt^d \to \Rr$ and $\lambda \in \Rr$ such that 
	\begin{equation}\label{LLP}
	\begin{cases}
	&\lambda +u+Du\cdot Dv=g'(m)\theta \quad \rm{in} \ \Tt^d, \\
	& -\mathrm{div}(m Dv)-\mathrm{div}(\theta Du) = 1-m \quad \rm{in} \ \Tt^d. 
	\end{cases}
	\end{equation}
\end{problem}

\begin{remark}
	The normalization condition $\int u dx=0$ is required for the uniqueness of the constant $\lambda$. Given a solution of \eqref{LLP}, by adding a constant $\kappa$ to $u$ and subtracting $\kappa$ to $\lambda$, we produce another solution. 
\end{remark}

The existence of a solution to the preceding problem is established in Proposition \ref{lambda} 
in Section \ref{RA}. In that section,  
we prove the following improved asymptotic rate of convergence. 
\begin{theorem}
	\label{TT}	
	Suppose Assumption \ref{beta} holds. 
	Let  $(u^\epsi,m^\epsi)$ and $(u,m,\bar H)$, with $m>0$  and $\int u=0$, be classical solutions of, respectively, Problems \ref{P1} and \ref{P2}. Let $(v, \theta, \lambda)$ be the corresponding classical solution to Problem \ref{P4}.  Then,
	\[\lim_{\epsi \to 0} \left\| u^\epsi-\frac{\bar{H}}{\epsi}-u-\lambda\right\|_{\infty}+\|m^\epsi -m\|_{\infty}=0. \]
\end{theorem}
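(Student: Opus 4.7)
The plan is to substitute the asymptotic ansatz $u^\epsi = \bar{H}/\epsi + u + \lambda + \epsi v + w^\epsi$ and $m^\epsi = m + \epsi \theta + \mu^\epsi$ into \eqref{DP}, exploit that $(u,m,\bar{H})$ solves \eqref{CP} and $(v,\theta,\lambda)$ solves \eqref{LLP}, and show that the residuals $w^\epsi$ and $\mu^\epsi$ vanish uniformly. A first-order Taylor expansion $g(m+\epsi\theta+\mu^\epsi) = g(m) + g'(m)(\epsi\theta+\mu^\epsi) + $ (quadratic remainder), combined with the exact cancellations coming from Problems \ref{P2} and \ref{P4}, produces a coupled system of the schematic form
\begin{equation*}
\epsi w^\epsi + Du\cdot Dw^\epsi + \tfrac{1}{2}|Dw^\epsi|^2 - g'(m)\mu^\epsi = \epsi \Phi_1^\epsi + N_1^\epsi,
\end{equation*}
\begin{equation*}
\epsi \mu^\epsi - \mathrm{div}\bigl(m Dw^\epsi + \mu^\epsi Du\bigr) = \epsi\Phi_2^\epsi + N_2^\epsi,
\end{equation*}
where the $\Phi_i^\epsi$ are bounded in terms of $u,v,\theta,m$ and the $N_i^\epsi$ are nonlinear in $(w^\epsi, Dw^\epsi, \mu^\epsi)$.

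I then invoke the standard MFG monotonicity trick: multiply the first equation by $\mu^\epsi$ and the second by $w^\epsi$, integrate over $\Tt^d$, and subtract. By integration by parts, the first-order transport term $\int \mu^\epsi Du\cdot Dw^\epsi$ cancels the corresponding contribution from the divergence, and the algebraic identity $\tfrac{1}{2}(|Du^\epsi|^2 - |Du|^2) = Du\cdot Dw^\epsi + \tfrac{1}{2}|Dw^\epsi|^2$ (up to the $\epsi Dv$ correction) yields the coercive identity
\begin{equation*}
\int_{\Tt^d}\frac{m+m^\epsi}{2}|Dw^\epsi|^2\,\dx + \int_{\Tt^d} g'(m)(\mu^\epsi)^2\,\dx \;\leq\; C\epsi + \text{(lower-order, absorbable)}.
\end{equation*}
Using the uniform positivity $m\geq m_0>0$ --- a consequence of Proposition \ref{density}, the $C^{1,\alpha}$ regularity of $m$, and the compactness of $\Tt^d$ --- together with the lower bound $g'(m)\geq c_0>0$ given by Assumption \ref{beta}, and applying Young's and Poincar\'e's inequalities to absorb the $\epsi$-order and quadratic terms into the left-hand side, one concludes $\|Dw^\epsi\|_{L^2(\Tt^d)} + \|\mu^\epsi\|_{L^2(\Tt^d)} \to 0$.

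To upgrade this $L^2$ decay to the uniform convergence claimed in the theorem, I use the uniform $C^{2,\alpha}$ bound on $u^\epsi - \bar{H}/\epsi$ and the $C^{1,\alpha}$ bound on $m^\epsi$ coming from Sections \ref{esti}--\ref{esti2} and Corollary \ref{cor1}; these make $w^\epsi$ and $\mu^\epsi$ equicontinuous, so by Arzel\`a--Ascoli any subsequence has a $C^0$-convergent sub-subsequence whose limit must vanish since its $L^2$-norm does. The additive constant $\lambda$ is pinned down by the compatibility condition for \eqref{LLP}: if a subsequential uniform limit of $u^\epsi - \bar{H}/\epsi - u$ equaled some constant $c^\ast\neq \lambda$, then the rescaled correction $((u^\epsi - \bar{H}/\epsi - u - c^\ast)/\epsi, (m^\epsi - m)/\epsi)$ would, along a further subsequence, converge to a solution of \eqref{LLP} with $\lambda$ replaced by $c^\ast$, contradicting the uniqueness of the triple $(v,\theta,\lambda)$ from Proposition \ref{lambda}.

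The main obstacle lies precisely in this last step, namely the uniqueness of $\lambda$ in \eqref{LLP}; it rests on the same coercive quadratic form $\int m|Dv|^2\,\dx + \int g'(m)\theta^2\,\dx$ that drives the monotonicity estimate, together with the solvability normalization $\int_{\Tt^d}\theta\,\dx=0$ inherited from $\int m^\epsi = \int m = 1$. Everything else --- the residual equation derivation, the monotonicity identity, the Young/Poincar\'e absorption, and the Arzel\`a--Ascoli compactness --- follows a pattern structurally parallel to the second-order analysis of \cite{CardPorrLongTimeME}, and proceeds routinely once the regularity framework established in the previous sections is in place.
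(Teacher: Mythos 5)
Your overall strategy --- compare $(u^\epsi,m^\epsi)$ directly with the ansatz $\bar H/\epsi+u+\lambda+\epsi v$, $m+\epsi\theta$ via the Lasry--Lions monotonicity identity, then upgrade by compactness --- is genuinely different from the paper's proof, but as written it has a real gap at its center: the treatment of the mean of $w^\epsi$. Your coercive inequality claims a right-hand side ``$C\epsi+$ (lower-order, absorbable)'', but the residual from the Fokker--Planck equation pairs against $w^\epsi$ itself, not $Dw^\epsi$, and a priori the spatial average $c^\epsi=\int_{\Tt^d}w^\epsi\,dx$ is only $o(1/\epsi)$ (all that Corollary \ref{cor1} gives is $\epsi u^\epsi\to$ const uniformly). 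Poincar\'e absorbs only the zero-mean part, so the error term is really $O(\epsi^2)(1+|c^\epsi|)$, and the estimate does not close by itself: one must couple it with the integrated Hamilton--Jacobi equation, which yields $\epsi|c^\epsi|\lesssim \|Dw^\epsi\|_{L^2}+\|\mu^\epsi\|_{L^2}+\epsi^2$, and then solve the two inequalities self-consistently to get $\|Dw^\epsi\|_{L^2}+\|\mu^\epsi\|_{L^2}=O(\epsi)$ and $|c^\epsi|=O(1)$. This coupling is absent from your write-up, and without it both of your later steps are circular: Arzel\`a--Ascoli applied to $w^\epsi$ presupposes a uniform bound on its mean, and your final rescaling argument needs compactness of $\bigl((u^\epsi-\bar H/\epsi-u-c^\ast)/\epsi,(m^\epsi-m)/\epsi\bigr)$ in a topology strong enough to pass to the limit in \eqref{LLP}, i.e.\ precisely the $O(\epsi)$-rate bounds plus an elliptic bootstrap (differentiating the equation as in the paper's $H^k$ propositions for Problem \ref{P5}) that you never establish. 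Your stated conclusion $\|Dw^\epsi\|_{L^2}+\|\mu^\epsi\|_{L^2}\to 0$ is too weak for that step by a full factor of $\epsi$.

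For contrast, the paper sidesteps the residual-comparison difficulties entirely. It first builds a complete $H^k$ theory for the linearized \emph{discounted} system (Problem \ref{P5}): Lax--Milgram existence via the coercive form $\int_{\Tt^d} m|D\phi|^2+(Du\cdot D\phi+\epsi\phi)^2/g'(m)\,dx$, $\epsi$-uniform estimates $\|\epsi v^\epsi\|+\|Dv^\epsi\|+\|\theta^\epsi\|\leq C(\|A\|+\|B\|+1)$, bootstrap to all $H^k$, and the vanishing-discount limit (Proposition \ref{lambda}), where uniqueness for \eqref{LLP} identifies $\lambda$. Then, instead of estimating $u^\epsi-$ansatz, it runs a fixed-point argument in the ball $E_k$: for $(v,\theta)\in E_k$ it solves \eqref{LDP} with the quadratically small data $A,B$ ($\|A\|_{H^k}+\|B\|_{H^k}\leq C\hat C_k^2\epsi$), obtains a fixed point $(v^\epsi,\theta^\epsi)$, observes that the resulting pair of ansatz form solves \eqref{DP} \emph{exactly}, and invokes uniqueness of classical solutions of \eqref{DP} (identity \eqref{identity} together with $m^\epsi>0$) to conclude it \emph{is} $(u^\epsi,m^\epsi)$; the theorem then follows from $\|\epsi v^\epsi-\lambda\|_\infty\to 0$. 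The construction-plus-uniqueness route buys exactly what your direct route struggles with: the constant is pinned down automatically, and no a priori control of $u^\epsi-\bar H/\epsi$ is ever needed. Your approach is likely repairable along the self-consistent lines sketched above, but as submitted the key estimate and the identification of $\lambda$ are not proved.
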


\begin{remark}
	\label{lbrem}
	The preceding theorem remains valid if we replace Assumption \ref{beta}
	with the weaker condition that for any $z_0>0$ there exists $\gamma(z_0)>0$ such that 
	\[
	g'(z)>\gamma(z_0)
	\]
	for all $z>z_0$. 
\end{remark}

In the last section of the paper,
Section \ref{selection}, 
we investigate the asymptotic behavior of $(u^\epsi, m^\epsi)$ as $\epsi \to 0$. Here, we work with weak solutions in the sense of the definition below, and we consider the case where uniqueness of solution for Problem \ref{P2} may not hold.
In this case, we replace Assumption \ref{osc} and \ref{beta} 
the following assumption that still allows the existence of solutions to be established. 
\begin{hyp}
	\label{H3}
	There exist positive constants, $c_1, c_2\geq 0$,
	and a positive real number, $\alpha$, with
	\begin{itemize}
		\item[-] $\alpha>0$ if $d\leq 4$
		\item[-] $\alpha>\frac{d-4}{d}$
		if $4<d\leq 8$
		\item[-] $\alpha>\frac{d-4}{2}$ if $d\geq 8$ 
	\end{itemize}
	such that 
	\[
	c_1 m^{\alpha-1}\leq g'(m)\leq c_2 m^{\alpha-1}
	\]
	for all $m>0$. 
\end{hyp}

\begin{remark}
	From the preceding hypothesis, we obtain that there exist positive 
	constants $\tilde c_1, \tilde c_2$ and $C$ such that 
	\[
	\tilde c_1 m^\alpha -C \leq g(m)\leq \tilde c_2 m^\alpha +C.
	\]
\end{remark}

Of course, 
if Assumption \ref{osc} does not hold, we cannot ensure the existence of smooth solutions to Problem \ref{P1}. Nonetheless, the existence of weak solutions for Problem \ref{P1} was proven in \cite{FG2}. More precisely, we
consider the following result. 
\begin{teo}[from \cite{FG2}]
	\label{ws}
	Suppose that Assumption \ref{H3} holds. Then, Problem \ref{P1} has a weak solution
	$(m^\epsilon, u^\epsilon)$ as follows. There exists a constant $C$, independent of $\epsilon$
	such that
	\begin{enumerate}
		\item $m^\epsilon\geq 0$ and $\int_{\Tt^d} m^\epsilon	dx =1$,
		\item $\|(m^\epsilon)^{\frac{\alpha+1}{2}}\|_{W^{1,2}(\Tt^d)}\leq C$,
		\item $\|u^\epsi-\int_{\Tt^d} u^\epsi dx \|_{W^{1,2}(\Tt^d)}\leq C$,
		\item $\left|\epsilon \int_{\Tt^d} u^\epsi dx\right|\leq C$, 
		\item $\|(m^\epsilon)^{\frac{\alpha+1} 2}Du^\epsilon\|_{BV(\Tt^d)}\leq C$. 
	\end{enumerate}
	Moreover, 
	\begin{equation}
	\label{subsol}
	-\epsilon u^\epsilon -\frac{|Du^\epsilon |^2}{2}-V(x)+g(m^\epsilon)\geq 0
	\end{equation}
	in the sense of distributions, 
	with 
	\begin{equation}
	\label{intweaksol}
	(-\epsilon u^\epsilon -\frac{|Du^\epsilon |^2}{2}-V(x)+g(m^\epsilon))m^\epsilon =0, 
	\end{equation}
	almost everywhere. Furthermore, 
	\begin{equation}
	\label{epweaksol}
	\epsilon m^\epsilon -\div ( m^\epsilon Du^\epsilon)=\epsilon, 
	\end{equation}
	in the sense of distributions and almost everywhere. 
\end{teo}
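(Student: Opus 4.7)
The plan is to follow the monotone-operator and elliptic-regularization approach of \cite{FG2}, adapted to absorb the discount terms on both sides of \eqref{DP}. First I would approximate Problem \ref{P1} by the viscous system obtained by adding $-\sigma\Delta u^{\epsi,\sigma}$ and $-\sigma\Delta m^{\epsi,\sigma}$ to the two equations. For each $\sigma>0$, standard elliptic theory combined with the monotonicity of the MFG operator $(u,m)\mapsto(\tfrac12|Du|^2+V-g(m),-\div(mDu))$ yields a classical positive solution. The discount $\epsi>0$ is only helpful at this stage, providing extra coercivity, so this step is no harder than in the undiscounted case of \cite{FG2}.

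Next I would derive a priori bounds uniform in $\sigma$. The fundamental identity comes from testing the regularized HJ equation against $m^{\epsi,\sigma}-1$ and the FP equation against $u^{\epsi,\sigma}$, then subtracting. Thanks to $\int_{\Tt^d} m^{\epsi,\sigma}\,\dx=1$, the $\epsi$ source on the right of FP is precisely what cancels the $\epsi u^{\epsi,\sigma}$ contribution from the HJ test, producing the schematic identity
\begin{equation*}
\tfrac{1}{2}\int_{\Tt^d}(1+m^{\epsi,\sigma})|Du^{\epsi,\sigma}|^2\,\dx + \int_{\Tt^d}\bigl(g(m^{\epsi,\sigma})-g(1)\bigr)(m^{\epsi,\sigma}-1)\,\dx + \sigma R^{\epsi,\sigma} = \int_{\Tt^d} V(m^{\epsi,\sigma}-1)\,\dx,
\end{equation*}
with $R^{\epsi,\sigma}\geq 0$. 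Monotonicity of $g$ and the polynomial bound $g(m)m\geq \tilde c_1 m^{\alpha+1}-Cm$ from the remark after Assumption \ref{H3} then deliver (1), the $L^{\alpha+1}$ control on $m^{\epsi,\sigma}$, and the $L^2$ control on $Du^{\epsi,\sigma}$; combined with Poincar\'e this gives (3), while the scalar estimate (4) follows by integrating the HJ equation against $1$. For estimate (2) I would differentiate the HJ equation to obtain $g'(m^{\epsi,\sigma})Dm^{\epsi,\sigma}=\epsi Du^{\epsi,\sigma}+D^2u^{\epsi,\sigma}Du^{\epsi,\sigma}+DV$, multiply by $m^{\epsi,\sigma}Dm^{\epsi,\sigma}$, and use $g'(m)\geq c_1 m^{\alpha-1}$ to convert the left-hand side into $|\nabla (m^{\epsi,\sigma})^{(\alpha+1)/2}|^2$; the three dimension-dependent regimes in Assumption \ref{H3} arise precisely as the thresholds for absorbing the cubic cross term $\int m^{\epsi,\sigma}|Du^{\epsi,\sigma}|^2\cdot|D^2u^{\epsi,\sigma}|$ via Sobolev embedding of $W^{1,2}$. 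Estimate (5) follows by a further differentiation combined with the factorization $(m^{\epsi,\sigma})^{(\alpha+1)/2}\cdot Du^{\epsi,\sigma}$ into two factors each controlled by the preceding bounds.

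Finally I would let $\sigma\to 0$ along a subsequence. Weak-$*$ compactness supplies limits $(u^\epsi,m^\epsi)$ satisfying (1)--(5). Equation \eqref{epweaksol} passes to the limit distributionally because the BV bound (5) gives strong $L^1$ compactness of the flux $m^{\epsi,\sigma}Du^{\epsi,\sigma}$, while the $\epsi m^{\epsi,\sigma}$ term is already compact in $L^{\alpha+1}$. Convexity of $p\mapsto |p|^2/2$ together with weak lower semicontinuity yields only the one-sided inequality \eqref{subsol}. The main obstacle, and the point requiring most care, is the complementary slackness \eqref{intweaksol}: testing the regularized HJ against $m^{\epsi,\sigma}$ and using FP to eliminate $\int m^{\epsi,\sigma}|Du^{\epsi,\sigma}|^2$ yields that the integrand of \eqref{intweaksol} has vanishing integral at the level $\sigma>0$, so combined with the one-sided distributional inequality it must vanish almost everywhere in the limit. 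Stability of this identity under the $\sigma$-limit is secured by strong convergence of $m^{\epsi,\sigma}$ in $L^{\alpha+1}$ (obtained from estimate (2) and Sobolev embedding) together with the refined BV bound on $m^{\epsi,\sigma}Du^{\epsi,\sigma}$, which together make the cross term $m^{\epsi,\sigma}|Du^{\epsi,\sigma}|^2$ converge in the sense of measures to the correct limit.
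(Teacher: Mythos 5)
First, a framing point: the paper does not prove Theorem \ref{ws} at all --- it is imported verbatim from \cite{FG2}, and \cite{FG2} already treats the discounted problem directly (see the introduction: ``the existence of a solution for the discounted problem was shown \dots for first-order MFG in \cite{FG2}''), so no ``adaptation to absorb the discount terms'' is actually needed. Your overall architecture (regularize, derive $\sigma$-uniform estimates from the coupled system, pass to the limit by monotonicity and convexity) is broadly the right family of ideas, and your first-order identity is correct: the $\epsi u(m-1)$ terms do cancel exactly as you claim, and it delivers estimates (1), (3), (4) and the $L^{\alpha+1}$ bound on $m$.

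The genuine gap is in your derivation of estimate (2), and it propagates to (5). Multiplying the differentiated Hamilton--Jacobi equation $g'(m)Dm=\epsi Du+D^2u\,Du+DV$ by $mDm$ gives, via $g'(m)\geq c_1 m^{\alpha-1}$, a left-hand side bounded below by $c_1\int m^{\alpha}|Dm|^2\,\dx$, which controls $|D(m^{(\alpha+2)/2})|^2$, \emph{not} the claimed $|D(m^{(\alpha+1)/2})|^2\sim m^{\alpha-1}|Dm|^2$ --- the exponent is off by one power of $m$. Using the weight $Dm$ instead fixes the exponent but leaves the cross term $\int Dm\cdot D^2u\,Du\,\dx$, which cannot be absorbed by Sobolev embedding alone: integrating it by parts produces third derivatives of $u$, for which you have no bound. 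The standard repair --- and what makes the estimate work in the smooth setting of this very paper (see the computation \eqref{com} in the proof of Proposition \ref{ubd}, specialized to $p=0$) --- is to couple the two equations: testing the Fokker--Planck equation against $\div(Du)$ and substituting the differentiated HJ equation makes the Hessian appear with a \emph{favorable} sign, yielding schematically
\begin{equation*}
\int_{\Tt^d} g'(m)|Dm|^2+m\sum_{i,j}u_{x_ix_j}^2\,\dx=\int_{\Tt^d} Dm\cdot DV\,\dx+O(\epsi\text{-terms}).
\end{equation*}
This identity gives both the gradient part of (2) and the bound $\int m|D^2u|^2\,\dx\leq C$, which is exactly what your sketch of (5) is missing: $D\bigl(m^{(\alpha+1)/2}Du\bigr)$ contains the term $m^{(\alpha+1)/2}D^2u$, whose $L^1$ norm is controlled by Cauchy--Schwarz as $\bigl(\int m|D^2u|^2\bigr)^{1/2}\bigl(\int m^{\alpha}\bigr)^{1/2}$; the ``factorization into two factors each controlled by the preceding bounds'' does not by itself put a product of two $W^{1,2}$-type objects in $BV$. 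Relatedly, your attribution of the three dimension regimes in Assumption \ref{H3} to absorbing a cubic term $\int m|Du|^2|D^2u|$ is speculative and likely misplaced: a more identifiable role for those thresholds is in the limit passage, e.g.\ ensuring $m^{\epsi,\sigma}\in L^2$ uniformly (via $m^{(\alpha+1)/2}$ bounded in $W^{1,2}\hookrightarrow L^{2^*}$) so that the flux identification $m^{\epsi,\sigma}Du^{\epsi,\sigma}\rightharpoonup m^\epsi Du^\epsi$ holds --- an identification your complementary-slackness argument for \eqref{intweaksol} silently uses, together with lower semicontinuity of the jointly convex functional $(m,w)\mapsto |w|^2/m$, and which you should make explicit.
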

Similar techniques applied to Problem \ref{P2} yield the 
existence of
a number $\Hh$ and functions $(m,u)$ satisfying estimates 1-3 and 5 in Theorem \ref{ws} such that 
\begin{equation}
\label{ssole}
\Hh-\frac{|Du |^2}{2}-V(x)+g(m)\geq 0
\end{equation}
in the sense of distributions, 
with 
\begin{equation}
\label{asdfghjk}
(\Hh -\frac{|Du |^2}{2}-V(x)+g(m))m =0, 
\end{equation}
almost everywhere. Furthermore, 
\begin{equation}
\label{lpfp}
-\div ( m Du)=0, 
\end{equation}
in the sense of distributions and almost everywhere.

 When classical solutions are not available, we need to work with regular weak solutions, as defined next. 
\begin{definition}
A pair $(m^\epsilon, u^\epsilon)$ is a {\em regular weak solution} of Problem \ref{P1} if it satisfies 
\eqref{subsol}, \eqref{intweaksol} and \eqref{epweaksol} in the preceding theorem and, in particular, the same estimates 1-5 with the same constants. Similarly, a triple $(u,m,\Hh)$ is a {\em regular weak solution} of Problem \ref{P2} if it satisfies 
\eqref{ssole}, \eqref{asdfghjk},  \eqref{lpfp} and the estimates 1-5 in the preceding theorem with the same constants.
\end{definition}

In Section \ref{selection}, Proposition \ref{stabilitypro}, 
 we consider a sequence of regular weak solution of Problem \ref{P1} and show that, by extracting a subsequence if necessary, it converges to a regular weak solution of Problem \ref{P2}.  In particular, this approach gives 
 the existence a regular weak solution for Problem \ref{P2}.

Our selection result for regular weak solutions, proven in Section \ref{selection}, is the following theorem. 
\begin{teo}
\label{result2}
Let  $(u^\epsi,m^\epsi)$ be a regular weak solution of Problem \ref{P1}.
Suppose that $\langle u^\epsi \rangle \rightharpoonup \bar{u}$ in $H^1(\Tt^d)$ and that 
$m^\epsi\rightharpoonup \bar{m}$ weakly in $L^{1}(\Tt^d)$.
Let $(u,m)$ be a regular weak solution of Problem \ref{P2}. 
Then, 
\begin{equation}\label{converge}
\int_{\Tt^d} (g(m^\epsi)-g(m))(m^\epsi-m) dx\to 0
\end{equation}
and $\bar m=m$. 
Moreover, we have
\begin{equation}\label{criterion}
\int_{\Tt^d} \langle \bar u \rangle m dx
\leq 
\int_{\Tt^d} \langle u \rangle m dx, 
\end{equation}
where
\[
 \langle f\rangle = f(x)-\int_{\Tt^d}f dx.
 \]
\end{teo}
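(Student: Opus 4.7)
My plan is to adapt the Lasry--Lions monotonicity argument to the weak-solution setting. The first two conclusions follow from this computation combined with the Fokker--Planck identities, while the selection criterion \eqref{criterion} is the main obstacle and requires a refined passage to the limit.

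First, I would combine the subsolution inequality \eqref{subsol} with the complementarity identity \eqref{intweaksol} for $(u^\epsilon,m^\epsilon)$, testing respectively against $m$ and against $m^\epsilon$, together with the analogous manipulations of \eqref{ssole} and \eqref{asdfghjk} for $(u,m)$. After subtracting and using $\int(m^\epsilon-m)\,dx=0$, this yields the Lasry--Lions-type identity
\begin{equation*}
\int(g(m^\epsilon)-g(m))(m^\epsilon-m)\,dx \leq \epsilon\!\int u^\epsilon(m^\epsilon-m)\,dx + \tfrac12\!\int(|Du^\epsilon|^2-|Du|^2)(m^\epsilon-m)\,dx,
\end{equation*}
whose left-hand side is nonnegative by the monotonicity of $g$. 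Next, testing the Fokker--Planck equations \eqref{epweaksol} and \eqref{lpfp} against $u^\epsilon$ and $u$ yields
\begin{equation*}
\int m^\epsilon|Du^\epsilon|^2 = \epsilon\!\int u^\epsilon(1-m^\epsilon), \quad \int m^\epsilon Du^\epsilon\!\cdot\!Du = -\epsilon\!\int m^\epsilon u, \quad \int m\,Du\!\cdot\!Du^\epsilon = 0, \quad \int m|Du|^2 = 0,
\end{equation*}
the last of which implies $Du=0$ almost everywhere on $\{m>0\}$. Expanding $\int(m^\epsilon-m)(|Du^\epsilon|^2-|Du|^2)$ via these identities and rearranging would produce
\begin{equation*}
\int(g(m^\epsilon)-g(m))(m^\epsilon-m)\,dx + \tfrac12\!\int m|Du^\epsilon|^2\,dx + \tfrac12\!\int m^\epsilon|Du|^2\,dx \leq \epsilon\!\int\langle u^\epsilon\rangle\Bigl(\tfrac{m^\epsilon+1}{2}-m\Bigr)\,dx,
\end{equation*}
where the substitution $u^\epsilon\to\langle u^\epsilon\rangle$ is legitimate because the test function has mean zero. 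The right-hand side is $O(\epsilon)$ by H\"older combined with the Sobolev embedding of $H^1$ and the $L^{\alpha+1}$ bound on the densities provided by Theorem~\ref{ws}(2). Each of the three nonnegative left-hand-side terms is therefore $O(\epsilon)$; the first is precisely \eqref{converge}, and in addition $\int m|Du^\epsilon|^2\to 0$ and $\int m^\epsilon|Du|^2\to 0$.

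For the identification $\bar m=m$, I would exploit the quantified monotonicity that follows from Assumption \ref{H3}. Writing $g(a)-g(b)=\int_b^a g'(s)\,ds$ with $g'(s)\geq c_1 s^{\alpha-1}$ and applying the Cauchy--Schwarz inequality yields
\begin{equation*}
(g(a)-g(b))(a-b)\geq C_\alpha\bigl(a^{(\alpha+1)/2}-b^{(\alpha+1)/2}\bigr)^2,\quad a,b\geq 0.
\end{equation*}
Hence $(m^\epsilon)^{(\alpha+1)/2}\to m^{(\alpha+1)/2}$ strongly in $L^2$, so after passing to a subsequence $m^\epsilon\to m$ almost everywhere; Vitali's theorem together with the $L^{\alpha+1}$-equi-integrability then upgrades this to strong $L^1$ convergence. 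Since $m^\epsilon\rightharpoonup\bar m$ weakly in $L^1$ by assumption, the two limits must agree and $\bar m=m$.

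The selection criterion \eqref{criterion} is where I expect the hardest work. Dividing the key inequality of the first step by $\epsilon$ and rewriting $\tfrac{1}{\epsilon}\!\int m^\epsilon|Du^\epsilon|^2=\int u^\epsilon(1-m^\epsilon)$ via the Fokker--Planck identity, I would obtain
\begin{equation*}
\int\langle u^\epsilon\rangle\Bigl(m-\tfrac{m^\epsilon+1}{2}\Bigr)\,dx+\frac{1}{2\epsilon}\!\int m|Du^\epsilon|^2\,dx+\frac{1}{2\epsilon}\!\int m^\epsilon|Du|^2\,dx\leq 0,
\end{equation*}
which holds for every regular weak solution $(u,m)$ of Problem \ref{P2}. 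Passing to the limit by weak $L^2$ convergence of $\langle u^\epsilon\rangle$ and strong $L^{\alpha+1}$ convergence of $m^\epsilon$, and discarding the two nonnegative terms, gives $\int\langle\bar u\rangle m\,dx\leq 0$. The hardest part is to sharpen this into the full comparison \eqref{criterion}: since $(\bar u,m,\Hh)$ is itself a regular weak solution by the stability result (Proposition \ref{stabilitypro}), applying the bound with $u=\bar u$ and contrasting it with the bound for a generic $u$ should isolate an Aubry--Mather-type gap in the difference $\frac{1}{2\epsilon}\!\int m^\epsilon(|Du|^2-|D\bar u|^2)\,dx$. Combined with $Du=D\bar u=0$ almost everywhere on $\{m>0\}$, this should yield \eqref{criterion} after a careful bookkeeping of the $O(1)$ gradient-energy remainders.
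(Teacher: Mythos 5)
Your treatment of \eqref{converge} and of $\bar m=m$ is essentially sound. The symmetric Lasry--Lions computation you set up is a direct-PDE repackaging of the paper's Mather-measure argument (the measures $\mu^\epsi$, $\mu$ of Propositions \ref{P74} and \ref{P76} are just $m^\epsi\,dx$ and $m\,dx$ lifted to phase space along $-Du^\epsi$, $-Du$, so your Fokker--Planck identities are the holonomy conditions \eqref{dhc}, \eqref{hc} and \eqref{CCC}); modulo the mollification/Fatou care needed to test the distributional inequalities \eqref{subsol} and \eqref{ssole} against the non-smooth weights $m$ and $m^\epsi$ (the paper handles the analogous point via $\eta_\delta$ and Jensen), your key inequality
\begin{equation*}
\int_{\Tt^d}(g(m^\epsi)-g(m))(m^\epsi-m)\,dx+\tfrac12\int_{\Tt^d} m|Du^\epsi|^2\,dx+\tfrac12\int_{\Tt^d} m^\epsi|Du|^2\,dx\leq \epsi\int_{\Tt^d}\langle u^\epsi\rangle\Bigl(\tfrac{m^\epsi+1}{2}-m\Bigr)dx
\end{equation*}
is correct, and your quantified-monotonicity identification of $\bar m$ is fine (note one slip: the second identity in your list should read $\int m^\epsi Du^\epsi\cdot Du\,dx=\epsi\int u(1-m^\epsi)\,dx=-\epsi\int\langle u\rangle m^\epsi\,dx$; as written it holds only under the normalization $\int u\,dx=0$, though you never actually use it).

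The genuine gap is \eqref{criterion}, and it is structural, not bookkeeping. Expanding $\tfrac12\int(|Du^\epsi|^2-|Du|^2)(m^\epsi-m)\,dx$ into pure squares annihilates the cross term $\int m^\epsi Du^\epsi\cdot Du\,dx$, and it is exactly this cross term --- evaluated through the discounted Fokker--Planck identity as $-\epsi\int\langle u\rangle m^\epsi\,dx$ --- that produces the right-hand side of \eqref{criterion}. In your scheme that information migrates into the energy terms $\tfrac{1}{2\epsi}\int m^\epsi|Du|^2\,dx$ and $\tfrac{1}{2\epsi}\int m|Du^\epsi|^2\,dx$, which are genuinely $O(1)$, and discarding them yields only $\int\langle\bar u\rangle m\,dx\leq 0$. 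That is strictly weaker than \eqref{criterion}: in the example of Section \ref{SSEA}, $\int\langle u\rangle m\,dx=-\int_{\Tt^1}u\,dx$ and the minimal value $-\int\tilde u\,dx$ is strictly negative, so the bound by $0$ selects nothing. Your proposed repair --- applying the inequality with $u=\bar u$ and ``contrasting'' it with the inequality for generic $u$ --- cannot close the gap, because both inequalities face the same direction and one-sided inequalities cannot be subtracted. The fix is the paper's asymmetric route: instead of the symmetric difference, derive the two one-sided inequalities \eqref{PP1} and \eqref{PP2} separately, by applying Young's inequality $-Du^\epsi\cdot Du\leq\tfrac12|Du^\epsi|^2+\tfrac12|Du|^2$ against $m^\epsi\,dx$, keeping the cross term and converting it via holonomy into $-\epsi\int\langle u\rangle m^\epsi\,dx$, and bounding $\tfrac12|Du|^2\leq-W$ a.e.\ using \eqref{ssole} (and symmetrically for $\mu$). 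Adding them gives \eqref{RRR}; since the monotone term is non-negative, $\int\langle u^\epsi\rangle m\,dx\leq\int\langle u\rangle m^\epsi\,dx$ holds for each $\epsi$, and the weak/strong passage to the limit gives \eqref{criterion}.
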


The proof of the preceding theorem relies on ideas from Aubry-Mather theory introduced in \cite{MR2458239}. The paper ends with a short example that illustrates the preceding result.

\section{Lack of uniqueness}
\label{LU}

Here, we examine the uniqueness of solutions of \eqref{CP}. First, we use the 
uniqueness method by Lasry-Lions \cite{ll3} to show that the probability density, $m$, is unique. Thus, failure of uniqueness for \eqref{CP} requires
multiplicity of solutions, $u$,  of the Hamilton-Jacobi equation. 
Second, we revisit an example from \cite{Gomes2016b}, where uniqueness does not hold. 
This example serves to illustrate the selection principle derived in Section \ref{selection}. 
\subsection{Lasry-Lions method}
\label{llm}
The monotonicity argument introduced by Lasry-Lions (see, \cite{ll1}  or the lectures \cite{LCDF}), can be used to prove the uniqueness of solution for MFGs in the time-dependent case and gives the uniqueness of $m$ in the { stationary problem}. Here, we apply this technique to Problem \ref{P2}.
Let $(u_1,m_2,\bar{H}_1)$ and $(u_2,m_2,\bar{H}_2)$ be classical solutions of \eqref{CP}. Then,
\begin{equation*}
\begin{cases}
&\frac{1}{2}|Du_1|^2-\frac{1}{2}|Du_2|^2+\bar{H}_1-\bar{H}_2=g(m_1)-g(m_2) \\
& -\mathrm{div}(m_1Du_1)+\mathrm{div}(m_2Du_2) = 0. \\ 
\end{cases}
\end{equation*}
Now, we multiply the first equation by $(m_1-m_2)$ and the second equation by $(u_1-u_2)$. Next, subtracting the resulting identities and integrating by parts, we obtain
\begin{equation}\label{identity}
\int_{\Tt^d}(m_1-m_2)(g(m_1)-g(m_2))+\frac{1}{2}(m_1+m_2)|Du_1-Du_2|^2 \,\dx=0.
\end{equation}
Accordingly, $m_1=m_2=m$ on $\Tt^d$ { because $g$ is strictly increasing}. Moreover, $Du_1=Du_2$ on $m>0$. Hence, classical solutions $(u,m,\bar{H})$ of \eqref{CP} with $m>0$ are unique up to an additive term in $u$. Uniqueness may fail if $m$ vanishes, as we show in Section \ref{aee}.
A similar proof gives that \eqref{identity} holds for the solutions of \eqref{DP}. By Lemma \ref{density}, $m^\epsi$ is positive. Hence, classical solutions of Problem \ref{P1} are unique.



\subsection{An explicit example}
\label{aee}

Here, we compute two distinct solutions of \eqref{CP}. In the example below, the existence of a unique  smooth solutions fails and $m$ vanishes at an interval.  
We show that
 $u$ is a Lipschitz viscosity solution and $m$ is a probability density.
These solutions are regular weak solutions as defined in Section \ref{WSS}. 
 
Let $g(m)=m$, $d=1,$ and $V(x)=\pi \cos (4\pi x)$. Then, \eqref{CP} becomes
\begin{equation}\label{BBB}
\begin{cases}
&\frac{1}{2}|u_x|^2+\pi\cos(4\pi x)=m+\bar{H} \quad\rm{in}\  \Tt, \\
& -(mu_x)_x = 0 \quad \rm{in} \ \Tt, \\
& m(x)\geq 0,\ \ \int m =1.
\end{cases}
\end{equation}
 From the second equation in \eqref{BBB}, $mu_x$ is constant. In particular, due to periodicity $u$ achieves a maximum or a minimum 
in $\Tt$. At this maximum or minimum point $u_x=0$. Accordingly, $mu_x=0$.
Thus, $u$ is constant on the set $m>0$.
 From the first equation in \eqref{BBB}
 and taking into account 
 that $\int_{\Tt} m dx=1$, we have $\bar{H}=0$
and, thus, 
 \[
  m(x)=(\pi \cos(4 \pi x))^+. 
  \]
 The preceding expression vanishes in an interval, as can be seen in Figure \ref{F1}.  
Set
\begin{align*}
(\hat{u}(x))_x=&\sqrt{2(-\pi \cos(4\pi x))^+}\cdot \chi_{\{\frac 1 8<x<\frac 1 4 \vee \frac 5 8<x<\frac 3 4\}}\\&-\sqrt{2(-\pi \cos(4\pi x))^+}\cdot \chi_{\{\frac 1 4<x<\frac 3 8 \vee \frac 3 4<x<\frac 7 8\}},
\end{align*}

and
\begin{equation*}
(\tilde{u}(x))_x=\sqrt{2(-\pi \cos(4 \pi x))^+}\cdot \chi_{\{\frac 1 8<x<\frac 3 8\}}-\sqrt{2(-\pi \cos(4\pi x))^+}\cdot \chi_{\{\frac 5 8<x<\frac 7 8\}},
\end{equation*}
where $\chi$ is the characteristic function. We observe that $(\hat{u},m,0)$ and $(\tilde{u},m,0)$ solve \eqref{BBB}. These two solutions are viscosity solutions - $\tilde u_x$ is continuous,  and
$u_x$ only has downward jumps, see Figures \ref{F2} and \ref{F3}.

\begin{IMG}
{"mplot", 
PP = If[# > 0, #, 0] &;
Plot[ PP[(Pi Cos[ 4 Pi x])], {x, 0, 1}, AxesLabel -> {"x", "m"}]
}	
\end{IMG}

\begin{figure}[htb!]
	\begin{centering}
		\includegraphics[width=0.4\textwidth]{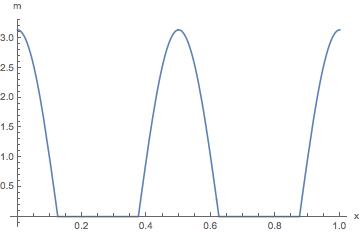}
		\caption{Density $m$ for \eqref{BBB} which exhibits areas with no agents.}
		\label{F1}
	\end{centering}
\end{figure}

\begin{IMG}
{"uxplot", 
Plot[Sqrt[2 PP[(-Pi Cos[ 4 Pi x])]] If[1/8 < x < 1/4 || 5/8 < x < 3/4, 1, 0]-Sqrt[2 PP[(-Pi Cos[ 4 Pi x])]] If[1/4 < x < 3/8 || 3/4 < x < 7/8, 1, 0], {x, 0, 1},
PlotPoints -> 200, MaxRecursion -> 10, AxesLabel -> {"x", Subscript["u", "x"]}]	
}
\end{IMG}
\begin{IMG}
	{"tildeuxplot", 
Plot[Sqrt[2 PP[(-Pi Cos[ 4 Pi x])]] If[1/8 < x < 3/8, 1, 0] - Sqrt[2 PP[(-Pi Cos[ 4 Pi x])]] If[5/8 < x < 7/8, 1, 0], {x, 0, 1}, PlotPoints -> 200, MaxRecursion -> 10, 
AxesLabel -> {"x", Subscript[OverTilde["u"], "x"]}]	}
\end{IMG}

\begin{figure}[h]
\centering	
	\begin{subfigure}[b]{\sizefigure\textwidth}
		\centering
		\includegraphics[width=\textwidth]{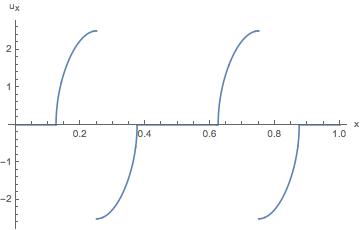}
		\caption{$u_x$}
		\label{F2}
	\end{subfigure}%
	\begin{subfigure}[b]{0.4\textwidth}
		\centering
		\includegraphics[width=\textwidth]{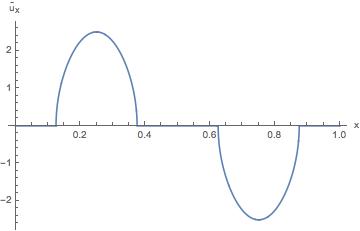}
		\caption{$\tilde u_x$}
		\label{F3}
	\end{subfigure}%
	\caption{Two distinct solutions, $u$ and $\tilde u$, of the Hamilton-Jacobi equation in \eqref{BBB}. Their gradients differ only when $m$ vanishes.}
\end{figure}

\section{Preliminary estimates}
\label{esti}

In this section, we establish preliminary a priori estimates for solutions of Problem \ref{P1}. To simplify the notation, we denote by $(u,m)$ 
a  
solution of Problem \ref{P1}, 
instead of $(u^\epsi,m^\epsi)$.
Here, we seek to establish bounds for $(u,m)$ that are uniform in $\epsi$. Accordingly,   
the bounds in this section depend only on the data, $g$, $V$, and $d$ but not on $\epsi$ nor on the particular solution.  
First, we show that  $m$ is a probability; that is, nonnegative and its integral is 1. Next, we establish a lower bound  and higher integrability for $m$. Finally, we prove Lipschitz bounds for $u$, which give the regularity of the solutions in the one-dimensional
case. The higher dimensional case requires further estimates that are addressed in the following section. 
\begin{pro}\label{density}
Let $(u,m)$ be a classical solution of Problem \ref{P1}. Then, for every $x\in \Tt^d$, $m(x)> 0$ and
\begin{equation}\label{AB}
 \int_{\Tt^d} m\,\dx =1.
\end{equation}
\end{pro}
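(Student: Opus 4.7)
The plan is to handle the two claims separately. The equality $\int_{\Tt^d} m\,dx = 1$ follows directly from integrating the Fokker--Planck-type equation, while the strict positivity $m>0$ follows from a minimum-principle argument at a hypothetical zero of $m$.

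For the mass identity, I would integrate the second equation of \eqref{DP} over $\Tt^d$. Since $\Tt^d$ has no boundary, $\int_{\Tt^d}\div(m Du)\,dx = 0$, and the right-hand side contributes $\epsilon\int_{\Tt^d}1\,dx = \epsilon$. This yields $\epsilon \int_{\Tt^d}m\,dx = \epsilon$, and dividing by $\epsilon>0$ gives \eqref{AB}.

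For the strict positivity, I would argue by contradiction. Suppose $x_0\in\Tt^d$ is such that $m(x_0)=0$. Since $m\in C^{1,\alpha}(\Tt^d)$ with $m\geq 0$, the point $x_0$ is an interior global minimum of $m$, so $Dm(x_0)=0$. Using the regularity $u\in C^{2,\alpha}(\Tt^d)$ and $m\in C^{1,\alpha}(\Tt^d)$, the divergence in the Fokker--Planck equation can be expanded classically as
\[
\div(m Du) = Dm\cdot Du + m\,\Delta u.
\]
Evaluating the second equation of \eqref{DP} at $x_0$ and substituting $m(x_0)=0$ and $Dm(x_0)=0$, every term on the left-hand side vanishes, yielding $0=\epsilon$. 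Since $\epsilon>0$, this contradiction rules out any zero of $m$, hence $m>0$ on $\Tt^d$.

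I do not anticipate a serious obstacle here. The only point worth noting is that the argument relies crucially on $\epsilon>0$ (which supplies the strictly positive right-hand side in the continuity equation) and on the classical regularity assumed in the definition of classical solution, so that the pointwise evaluation of both sides of the equation at $x_0$ is justified. The same mechanism is what will fail for Problem \ref{P2}, where the right-hand side of the transport equation is zero and $m$ may indeed vanish, as illustrated by the example in Section \ref{aee}.
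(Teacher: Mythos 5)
Your proof is correct and follows essentially the same route as the paper: integrating the transport equation over $\Tt^d$ to get the mass identity, and evaluating the equation at a hypothetical zero of $m$ (a global minimum, where $Dm$ vanishes) to derive the contradiction $0=\epsi$. Your version is in fact slightly more explicit than the paper's, since you spell out $Dm(x_0)=0$ and the classical expansion $\div(m\,Du)=Dm\cdot Du+m\,\Delta u$, which the paper uses implicitly.
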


\begin{proof}First, we show the positivity. Suppose that  $x_0\in \Tt^d$ is such that  $m(x_0)=\min_{x\in \Tt^d} m(x)=0$. At this point, the second equation in \eqref{DP} becomes
\[\epsi m(x_0)-Dm(x_0)Du(x_0)-m(x_0)\Delta u(x_0)=\epsi.\]
However, the left-hand side is $0$, which is a contradiction. To check \eqref{AB}, we integrate
the second equation in \eqref{DP} and use integration by parts. Then, we see that
\[ \epsi \int_{\Tt^d} m \dx = \epsi. \]
Thus, we get the conclusion.
\end{proof}
Next, we get a uniform lower bound for $m$.
\begin{pro}\label{lbd}
Suppose that Assumption \ref{osc} holds. Then, there exists a constant, $C>0,$ such that for any classical solution, $(u,m)$  of Problem \ref{P1}, we have
\begin{equation*}
\| \epsi u\|_{L^{\infty}(\Tt^d)}+\left\|\frac{1}{m}\right\|_{L^{\infty}(\Tt^d)} \leq C.
\end{equation*} 
\end{pro}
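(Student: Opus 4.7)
The plan is to exploit the maximum principle at extremal points of $u$, combined with the Fokker--Planck equation, to pin down the value of $m$ at those points, and then to feed this information back into the Hamilton--Jacobi equation together with Assumption~\ref{osc}.

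First I would evaluate the Fokker--Planck equation at a point $x_M \in \Tt^d$ where $u$ attains its maximum. Expanding the divergence in the second equation of \eqref{DP} gives
\[
\epsi m - Dm \cdot Du - m\Delta u = \epsi.
\]
At $x_M$ we have $Du(x_M)=0$ and $\Delta u(x_M)\leq 0$, so this reduces to
\[
m(x_M)\bigl(\epsi - \Delta u(x_M)\bigr) = \epsi.
\]
Since $\epsi - \Delta u(x_M) \geq \epsi > 0$, dividing yields $m(x_M) \leq 1$. Running the symmetric argument at a minimum point $x_m$ of $u$, where $\Delta u(x_m) \geq 0$, and noting that positivity of $m$ (Proposition~\ref{density}) forces $\epsi - \Delta u(x_m) > 0$, yields $m(x_m) \geq 1$.

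Next I would plug these inequalities into the Hamilton--Jacobi equation at the same extremal points. At $x_M$, using $Du(x_M)=0$, monotonicity of $g$, and $m(x_M)\leq 1$,
\[
\epsi u(x_M) = g\bigl(m(x_M)\bigr) - V(x_M) \leq g(1) - \min_{\Tt^d} V,
\]
and at $x_m$,
\[
\epsi u(x_m) = g\bigl(m(x_m)\bigr) - V(x_m) \geq g(1) - \max_{\Tt^d} V.
\]
Since $\max_{\Tt^d}\epsi u = \epsi u(x_M)$ and $\min_{\Tt^d}\epsi u = \epsi u(x_m)$, this yields a two-sided bound on $\epsi u$ of the form
\[
g(1) - \max_{\Tt^d} V \leq \epsi u(x) \leq g(1) - \min_{\Tt^d} V \quad \text{for all } x\in\Tt^d,
\]
depending only on $g(1)$ and $\|V\|_\infty$, and hence a uniform bound on $\|\epsi u\|_{L^\infty}$.

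Finally, I would return to the Hamilton--Jacobi equation pointwise and discard the nonnegative kinetic term to obtain
\[
g(m(x)) \geq \epsi u(x) + V(x) \geq \min_{\Tt^d}(\epsi u) + \min_{\Tt^d} V \geq g(1) - \operatorname{osc}_{\Tt^d} V.
\]
Monotonicity of $g$ and Assumption~\ref{osc} then give
\[
m(x) \geq g^{-1}\bigl(g(1) - \operatorname{osc}_{\Tt^d} V\bigr) > 0,
\]
which is exactly the required uniform lower bound on $m$, hence a uniform $L^\infty$ bound on $1/m$. The only subtlety, and the step I would verify carefully, is the use of the Fokker--Planck equation at $x_M$ and $x_m$ where $u$ attains its extrema: all ingredients ($Du=0$, sign of $\Delta u$, positivity of $m$) are valid for classical solutions, so the argument goes through without additional assumptions beyond Assumption~\ref{osc}.
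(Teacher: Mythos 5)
Your proposal is correct and follows essentially the same route as the paper: evaluating the Fokker--Planck equation at extremal points of $u$ to obtain $m\geq 1$ at the minimum and $m\leq 1$ at the maximum, feeding these into the Hamilton--Jacobi equation via monotonicity of $g$ to bound $\epsi u$, and then using Assumption~\ref{osc} to get the uniform lower bound on $m$. As a minor aside, your upper bound $\epsi u(x_M)\leq g(1)-\min_{\Tt^d}V$ corrects what appears to be a sign typo in the paper's $g(1)+\min_{\Tt^d}V$, though either way the $L^\infty$ bound depends only on $g(1)$ and $\|V\|_\infty$.
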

\begin{proof}
First, we bound 	
$\| \epsi u\|_{L^{\infty}(\Tt^d)}$. 	
Let $\tilde{x} \in \Tt^d$ be a minimum point of $u$. At this point, $Du(\tilde{x})=0$ and $\Delta u(\tilde{x}) \geq 0$. From the second equation in \eqref{DP}, we get
\begin{equation*}
m(\tilde{x})=\frac{\epsi}{\epsi-\Delta u(\tilde{x})}.
\end{equation*}
Since $m$ is positive, $\Delta u(\tilde{x})< \epsi$ and, thus, $m(\tilde{x})\geq 1$. Because $g$ is increasing, it follows from the first equation in \eqref{DP}
that \begin{equation}\label{ump}
\epsi u(\tilde{x})\geq g(1)-\max_{x \in \Tt^d}V(x).
\end{equation}
Next, let $\hat{x} \in \Tt^d$ be a maximum point of $u$. By an analogous argument, we get
\begin{equation*}
\epsi u(\hat{x}) \leq g(1)+\min_{x \in \Tt^d}V(x).
\end{equation*}
Thus, $\|\epsi u\|_{L^\infty(\Tt^d)} \leq C$.

Now, we address the lower bound for $m$.
By the first equation in  \eqref{DP} and \eqref{ump}, for all $x \in \Tt^d$, we have
\begin{align*}
g\big(m(x)\big)
&=\epsi u(x)+ \frac{1}{2}|Du(x)|^2+V(x)\\
&\geq \epsi u(\tilde{x})+\min_{x\in \Tt^d}V(x)
\geq g(1)-\mathrm{osc}V.
\end{align*}
Using Assumption \ref{osc}, we get the lower bound for $m$.
\end{proof}

In the following Lemma, we give an
upper bound for $m$.
\begin{lem}
Suppose that Assumptions \ref{osc} and \ref{beta} hold. Then, there exists a constant, $C>0,$ such that for any classical solution, $(u,m),$  of Problem \ref{P1}, we have
\begin{equation}\label{intm}
\int_{\Tt^d} m^{\beta+2} \,\dx \leq C.
\end{equation}
\end{lem}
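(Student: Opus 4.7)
The plan is to test the first equation of \eqref{DP} against $m$ and the second equation against $u$, then combine the two identities so that the quadratic term $\int m|Du|^2$ cancels, leaving a controllable expression for $\int g(m)\,m\,dx$. From that bound on $\int g(m)\,m\,dx$, the coercivity of $g'$ in Assumption \ref{beta} will upgrade the estimate to a bound on $\int m^{\beta+2}\,dx$.

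First I would multiply $\epsi u+\tfrac12|Du|^2+V=g(m)$ by $m$ and integrate over $\Tt^d$ to obtain
\[
\epsi\int_{\Tt^d}u m\,\dx+\tfrac12\int_{\Tt^d}|Du|^2 m\,\dx+\int_{\Tt^d}V m\,\dx=\int_{\Tt^d}g(m)m\,\dx.
\]
Next, multiplying $\epsi m-\div(mDu)=\epsi$ by $u$ and integrating by parts (there are no boundary terms on $\Tt^d$) gives
\[
\epsi\int_{\Tt^d}u m\,\dx+\int_{\Tt^d}m|Du|^2\,\dx=\epsi\int_{\Tt^d}u\,\dx.
\]
Substituting the expression for $\int m|Du|^2\,\dx$ from the second identity into the first one yields, after cancellations,
\[
\int_{\Tt^d}g(m)m\,\dx=\tfrac{\epsi}{2}\int_{\Tt^d}u m\,\dx+\tfrac{\epsi}{2}\int_{\Tt^d}u\,\dx+\int_{\Tt^d}V m\,\dx.
\]
By Proposition \ref{density}, $\int m\,\dx=1$, by Proposition \ref{lbd}, $\|\epsi u\|_{L^\infty}\leq C$, and $V$ is bounded by hypothesis. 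Therefore all three terms on the right-hand side are bounded independently of $\epsi$, so $\bigl|\int_{\Tt^d}g(m)m\,\dx\bigr|\leq C$.

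To conclude, I would use Assumption \ref{beta} (assuming the natural regime $\beta>-1$; for $\beta\leq -1$ the estimate $\int m^{\beta+2}\leq 1$ follows trivially from Jensen's inequality and $\int m=1$). Let $c_0>0$ be the lower bound on $m$ from Proposition \ref{lbd}. Integrating $g'(z)\geq C_1 z^\beta$ from $c_0$ to $m(x)$ gives
\[
g(m)\geq g(c_0)+\tfrac{C_1}{\beta+1}\bigl(m^{\beta+1}-c_0^{\beta+1}\bigr),
\]
so that $m\,g(m)\geq \tfrac{C_1}{\beta+1}m^{\beta+2}-Cm$ pointwise. Integrating over $\Tt^d$ and using $\int m\,\dx=1$ together with the bound on $\int g(m)m\,\dx$ yields
\[
\tfrac{C_1}{\beta+1}\int_{\Tt^d}m^{\beta+2}\,\dx\leq \int_{\Tt^d}g(m)m\,\dx+C\leq C,
\]
which is \eqref{intm}.

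The main point requiring care is the cancellation in step one: one must arrange the multipliers so that $\int m|Du|^2\,\dx$, which we have no direct a priori control over, is eliminated. The potentially singular behavior of $g$ near $0$ is a non-issue precisely because Proposition \ref{lbd} provides the uniform lower bound $m\geq c_0$, which both legitimizes integrating $g'$ from $c_0$ and avoids any appeal to $g(0)$.
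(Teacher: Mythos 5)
Your proof is correct, and its skeleton matches the paper's: cross-test the two equations of \eqref{DP} to bound $\int_{\Tt^d} m\,g(m)\,\dx$, then integrate the coercivity bound $g'(z)\geq C_1 z^\beta$ upward from the uniform lower bound on $m$ given by Proposition \ref{lbd}. The first step, however, takes a genuinely different route. The paper multiplies the Hamilton--Jacobi equation by $(1-m)$ (not your $m$) and the transport equation by $u$; in that combination the $\epsi u$ terms cancel identically, yielding the identity \eqref{iddd}, whose right-hand side contains the extra term $\int_{\Tt^d} g(m)\,\dx$, which is then absorbed via the second inequality in Assumption \ref{beta}, namely $g(z)\leq C_2+zg(z)$. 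Your multiplier $m$ instead eliminates $\int_{\Tt^d} g(m)\,\dx$ altogether at the price of retaining the $\epsi u$ terms, which you control with $\|\epsi u\|_{L^\infty}\leq C$ from Proposition \ref{lbd}. The trade-off is real: your argument never invokes the second condition of Assumption \ref{beta}, so the lemma holds under Assumption \ref{osc} plus only the growth condition $g'(z)\geq C_1z^\beta$, and you get a two-sided bound $\bigl|\int_{\Tt^d} g(m)m\,\dx\bigr|\leq C$; the paper's cancellation of the $\epsi u$ terms makes its energy identity independent of the $L^\infty$ bound and delivers the gradient estimate $\int_{\Tt^d}\frac{1+m}{2}|Du|^2\,\dx\leq C$ as a by-product, though Proposition \ref{lbd} is needed in the second step of both arguments anyway. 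Your explicit case split at $\beta=-1$ (where the antiderivative of $z^\beta$ is a logarithm) is a point of care the paper glosses over; the one blemish is that for $\beta<-2$ the map $t\mapsto t^{\beta+2}$ is convex, so Jensen's inequality runs the wrong way---but there the uniform bound $m\geq c_0$ gives $m^{\beta+2}\leq c_0^{\beta+2}$ pointwise, so \eqref{intm} still follows trivially and the gap is cosmetic.
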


\begin{proof}
First, we multiply the first equation in \eqref{DP} by $(1-m)$ and the second equation in \eqref{DP} by $u$. Integrating by parts and adding the resulting identities,  we have
\begin{equation}\label{iddd}
\int_{\Tt^d}\frac{1+m}{2}|Du|^2+mg(m) \,\dx=\int_{\Tt^d} (m-1)V+g(m) \,\dx.
\end{equation}
Using Assumption \ref{beta}, we get
\begin{equation*}
\int_{\Tt^d} mg(m)\,\dx \leq C.
\end{equation*}
On the other hand, in light of Proposition \ref{lbd}, there exists $0<m_0<m(x)$ for all $x \in \Tt^d$.
Furthermore, Assumption \ref{beta} guarantees that for all $t\geq m_0$,
\[g(t)\geq \frac{C}{\beta+1}t^{\beta+1}-\frac{C}{\beta+1}{m_0}^{\beta+1}+g(m_0).\]
Therefore, combining the preceding inequalities with \eqref{iddd}, we obtain \eqref{intm}.
\end{proof}

In the next proposition, we establish that $u$ is Lipschitz continuous and get uniform bounds for $m$ using 
a technique introduced in \cite{evans2003some}. 
\begin{pro}\label{ubd}
Suppose that Assumptions \ref{osc} and \ref{beta} hold. Let $d\geq 2$. Then, there exists a constant $C>0$ such that for any classical solution, $(u,m),$ of Problem \ref{P1}, we have
\begin{equation*}
\|Du\|_{L^{\infty}(\Tt^d)}+\|m\|_{L^{\infty}(\Tt^d)} \leq C.
\end{equation*}
\end{pro}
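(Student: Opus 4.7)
By Proposition \ref{lbd}, $\|\epsi u\|_{L^\infty(\Tt^d)}$ is bounded independently of $\epsi$, and, combined with the boundedness of $V$, the first equation in \eqref{DP} yields the pointwise identity
\[
\tfrac12|Du|^2 \;=\; g(m)-V-\epsi u.
\]
Because $g$ is continuous and strictly increasing, an $L^\infty$ upper bound for $m$ is equivalent to an $L^\infty$ upper bound for $|Du|$. My plan is to establish the former via an integral Bernstein argument adapted from \cite{evans2003some}, in which the two equations in \eqref{DP} are coupled so as to bootstrap the higher integrability of $m$ already obtained in \eqref{intm} up to every $L^{q}$, and ultimately to $L^{\infty}$.

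The first main step is to differentiate the first equation in \eqref{DP} with respect to $x_k$, multiply by $m^{p}u_{x_k}$ for a parameter $p\ge 0$, sum over $k$, and integrate over $\Tt^d$. Integration by parts of the resulting transport term $\int m^p Du\cdot D(\tfrac12|Du|^2)\,\mathrm{d}x$ using the identity $\mathrm{div}(mDu)=\epsi(m-1)$ from the second equation, together with the substitution $Du\cdot Dm = \epsi(m-1)-m\Delta u$ (also extracted from the second equation) into the term $\int m^p g'(m)Du\cdot Dm\,\mathrm{d}x$, produces a Caccioppoli-type identity whose dominant contribution on the right-hand side is $\int m^{p+1}g'(m)(m-1)\,\mathrm{d}x$. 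Assumption \ref{beta}, namely $g'(m)\ge C_1 m^\beta$, then controls this by $\int m^{p+\beta+2}\,\mathrm{d}x$, and for a suitable choice of $p$ one obtains a closed recursion: $\int m^{q_0}\,\mathrm{d}x\le C$ implies $\int m^{q_1}\,\mathrm{d}x\le C$ with $q_1>q_0$. Starting from \eqref{intm} and iterating yields $\|m\|_{L^{q}(\Tt^d)}\le C_q$ for every $q<\infty$.

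The second main step is to promote this to an $L^{\infty}$ bound. Viewing the second equation in \eqref{DP} as a first-order equation for $m$ with drift $Du$ (pointwise controlled by $g(m)$ through the first equation), I would apply the iteration scheme detailed in Section \ref{esti2}, where the hypothesis $d\ge 2$ enters through the Sobolev exponent $2^{\ast}=2d/(d-2)$. Once $\|m\|_{L^\infty(\Tt^d)}\le C$, the first equation in \eqref{DP} immediately delivers $\|Du\|_{L^\infty(\Tt^d)}\le C$. The principal obstacle I anticipate lies in closing the bootstrap of the first step: the exponent $p$ in the multiplier, the growth exponent $\beta$ in Assumption \ref{beta}, and the transport terms created by the FP equation must conspire so that, at each stage of the recursion, the right-hand side is strictly absorbed by the left-hand side; balancing these will depend delicately on $d$, on the lower bound for $m$ from Proposition \ref{lbd}, and on $\int_{\Tt^d} m\,\mathrm{d}x=1$.
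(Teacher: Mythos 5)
Your overall plan follows the paper's broad strategy (an integral Bernstein estimate in the spirit of \cite{evans2003some}, then an iteration to $L^\infty$, then recovering $\|Du\|_\infty$ from the Hamilton--Jacobi equation), but two of your concrete steps contain genuine gaps. First, your choice of multiplier does not produce a coercive identity, and your use of Assumption \ref{beta} is inverted. You propose to multiply the differentiated Hamilton--Jacobi equation by $m^p u_{x_k}$ and substitute $Du\cdot Dm=\epsi(m-1)-m\Delta u$ from the Fokker--Planck equation; this leaves an uncontrolled second-order term $\int_{\Tt^d} g'(m)m^{p+1}\Delta u\,\dx$ with no sign, and no positive terms in $|Dm|^2$ or $|D^2u|^2$ emerge. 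Worse, you then propose to bound the right-hand-side term $\int m^{p+1}g'(m)(m-1)\,\dx$ from above using $g'(z)\geq C_1 z^\beta$ --- but Assumption \ref{beta} is a \emph{lower} bound on $g'$ and provides no upper bound ($g'$ may grow arbitrarily fast), so this step simply fails. In the paper the roles are reversed: one multiplies the \emph{Fokker--Planck} equation by $\mathrm{div}(m^pDu)$ and exploits the symmetrization
\begin{equation*}
\sum_{i,j}\int_{\Tt^d}(mu_{x_i})_{x_i}(m^pu_{x_j})_{x_j}\,\dx=\sum_{i,j}\int_{\Tt^d}(mu_{x_i})_{x_j}(m^pu_{x_j})_{x_i}\,\dx,
\end{equation*}
which, after inserting the differentiated Hamilton--Jacobi equation, yields the three \emph{positive} terms $pm^{p-1}|Dm\cdot Du|^2$, $(p+1)g'(m)m^p|Dm|^2$, and $m^{p+1}\sum_{i,j}u_{x_ix_j}^2$ on the left, as in \eqref{com}; the lower bound $g'(m)\geq C_1m^\beta$ is then used exactly where it helps, namely for coercivity of the left-hand side, leading to $\int_{\Tt^d} m^{p+\beta}|Dm|^2\,\dx\leq C\int_{\Tt^d}m^{p+\beta+2}\,\dx$ with $C$ independent of $p$, i.e.\ \eqref{pestimate}.

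Second, your passage from $m\in L^q$ for every finite $q$ to $m\in L^\infty$ is both circular and incomplete. You cannot invoke ``the iteration scheme detailed in Section \ref{esti2}'': the DeGiorgi--Nash--Moser argument there is applied to the equation for $v=u_{x_k}$, and its uniform ellipticity \eqref{elliptic} is obtained \emph{from} Propositions \ref{lbd} and \ref{ubd} --- the very statement you are proving. Moreover, the weaker conclusion $\|m\|_{L^q}\leq C_q$ for each $q$ does not yield $\|m\|_{L^\infty}\leq C$ unless the growth of $C_q$ in $q$ is tracked; the Fokker--Planck equation, being first order in $m$, has no smoothing mechanism to supply this. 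The paper closes the gap with a Moser iteration run directly on \eqref{pestimate}: Sobolev's inequality gives $\|m\|_{L^{2^*(q+1)/2}}\leq\left[C(1+q)\right]^{2/(q+1)}\|m\|_{L^{q+1}}$ with the $p$-independence of $C$ being essential, and the iterates $r_n=\theta^n+\beta+1$ (seeded by \eqref{intm}) produce an infinite product $\prod_i(Cr_i)^{2(1-\alpha_i)/r_i}$ whose logarithm converges, giving the uniform $L^\infty$ bound in one stroke. Your final step (deducing $\|Du\|_\infty\leq C$ from $\|m\|_\infty\leq C$, $\|\epsi u\|_\infty\leq C$, and the first equation in \eqref{DP}) is correct and matches the paper.
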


\begin{proof}
Take $p \geq \beta$. Multiplying the second equation in \eqref{DP} by $\mathrm{div}(m^pDu)$, we obtain
\begin{equation}\label{fp1}
\int_{\Tt^d} \epsi m\mathrm{div}(m^pDu) \mbox{ }dx=\int_{\Tt^d}\mathrm{div}(mDu)\mathrm{div}(m^pDu) \mbox{ }dx.
\end{equation}
Differentiating the first equation in \eqref{DP}, we get
\begin{equation}\label{HJ1} 
\sum_{j}u_{x_j}u_{x_j x_i}=g'(m)m_{x_i}-\epsi u_{x_i}-V_{x_i}.
\end{equation}
Next, we rewrite the right-hand side of \eqref{fp1} as follows:
\begin{align}\label{com}
&\int_{\Tt^d}\mathrm{div}(mDu)\mathrm{div}(m^pDu)
=\sum_{i,j} \int_{\Tt^d} (mu_{x_i})_{x_i}(m^pu_{x_j})_{x_j}  \mbox{ }dx=
\sum_{i,j} \int_{\Tt^d} (mu_{x_i})_{x_j}(m^pu_{x_j})_{x_i}   \mbox{ }dx \notag \\
&=\sum_{i,j} \int_{\Tt^d} pm^{p-1}(u_{x_i}m_{x_i})(u_{x_j}m_{x_j})
+pm^pm_{x_i}u_{x_j}u_{x_ix_j}+m^pu_{x_i}m_{x_j}u_{x_ix_j}+m^{p+1}u^2_{x_ix_j} \,\dx\notag \\
&=\sum_{i,j} \int_{\Tt^d} pm^{p-1}(u_{x_i}m_{x_i})(u_{x_j}m_{x_j})+(p+1)m^pm_{x_i}u_{x_j}u_{x_ix_j}+m^{p+1}u^2_{x_ix_j} \,\dx\notag \\
&=\int_{\Tt^d} pm^{p-1}|Dm\cdot Du|^2+(p+1)g'(m)m^p|Dm|^2 dx\\
&\qquad +\int_{\Tt^d} m^{p+1}\sum_{i,j} u^2_{x_ix_j}-(p+1)m^p\sum_j m_{x_j}(\epsi u_{x_j}+V_{x_j}) \,\dx, \notag 
\end{align}
using \eqref{HJ1} in the last line.
Combining \eqref{com} and \eqref{fp1}, we obtain
\begin{align*}
&\int_{\Tt^d} pm^{p-1}|Dm\cdot Du|^2+(p+1)g'(m)m^p|Dm|^2+m^{p+1}\sum_{i,j} u^2_{x_ix_j} \,\dx \\
&=\int_{\Tt^d} \epsi m\mathrm{div}(m^pDu)+(p+1)m^p\sum_j m_{x_j}(\epsi u_{x_j}+V_{x_j}) \,\dx \\
&=\int_{\Tt^d} \epsi pm^pDm\cdot Du+(p+1)m^pDm\cdot DV \,\dx \\
& \leq \int_{\Tt^d} \frac{\epsi p}{2}\big(m^{p+1}+m^{p-1}|Dm\cdot Du|^2\big)+(p+1)\left[\delta |Dm\cdot DV|^2m^{p+\beta}+C_{\delta}m^{p-\beta}\right]\dx, 
\end{align*}
where the last inequality follows from a weighted 
 Cauchy inequality with $\delta>0$ and $\beta$ is the exponent in Assumption \ref{beta}.
  For $\delta$ sufficiently small, there exists $C$ that does not depend on $p$, such that 
\begin{equation}\label{pestimate}
\int_{\Tt^d} m^{p+\beta}|Dm|^2 \,\dx \leq C\int_{\Tt^d}m^{p+1}\leq C\int_{\Tt^d}m^{p+\beta+2}\,\dx,
\end{equation}
where the last inequality is a consequence of $\int_{\Tt^d}m=1$ and $p+\beta+2>p+1$. 

For $d>2$, 
 $2^*=\frac{2d}{d-2}$ is the Sobolev conjugated exponent to $2$; if $d=2$, we use the convention that $2^*$ 
is an arbitrarily large real number. 
Using Sobolev's inequality and \eqref{pestimate}, we gather that
\begin{align*}
\left[\int_{\Tt^d} (m^{\frac{p+\beta+2}{2}})^{2^*}\dx \right]^{\frac{1}{2^*}}&\leq C\left[\int_{\Tt^d} m^{p+\beta+2}+|Dm^{\frac{p+\beta+2}{2}}|^2\,\dx \right]^{1/2}\\& \leq C(1+|p+\beta+2|)\left[ \int_{\Tt^d} m^{p+\beta+2}\,\dx \right]^{1/2}.
\end{align*}
Thus, there exists a positive constant $C>0$ such that for all $q\geq \beta+1$,
\begin{equation*}
\|m\|_{L^{\frac{2^{*}(q+1)}{2}}}\leq \left[C(1+q)\right]^{\frac{2}{q+1}}\|m\|_{L^{q+1}}.
\end{equation*} 

Next, we take $1<\theta<2^*/2$ and define $r_n=\theta^n+\beta+1$. By the previous Lemma that $\|m\|_{r_0}$ is bounded. Now we observe that $\frac{r_n}{r_{n+1}} <\frac{2}{2^*}$. Thus, for each $n \in \Nn$, there exists $0<\alpha_n<1$ satisfying
\[\frac{r_n}{r_{n+1}}=\alpha_n+\frac{1-\alpha_n}{2^*/2}.\]
By H\"older's inequality and the above estimate with $q+1=r_n$, we obtain
\begin{align*}
\|m\|_{r_{n+1}} 
&\leq \|m\|^{\alpha_n}_{r_n}\|m\|^{1-\alpha_n}_{2^{*}r_n/2} 
\leq \|m\|^{\alpha_n}_{r_n} \left\{ (C r_n)^{\frac{2}{r_n}}\|m\|_{r_n} \right\}^{1-\alpha_n} \\
&=(C r_n)^{\frac{2(1-\alpha_n)}{r_n}}\|m\|_{r_n}.
\end{align*}
Iterating the prior inequality, we get 
\begin{equation*}
\|m\|_{r_{n+1}} \leq \|m\|_{r_0} \prod^n_{i=0} (C r_i)^{\frac{2(1-\alpha_i)}{r_i}}.
\end{equation*}
The right-hand side is bounded uniformly in $n \in \Nn$ because
\[
\log \left(\prod^n_{i=0} (C r_i)^{\frac{2(1-\alpha_i)}{r_i}}\right)\leq \sum^n_{i=0}\frac{2}{r_i}\left[C+\log(r_i)\right]<+\infty.
\]
Hence, $\|m\|_\infty$ is bounded.
According to the first equation in \eqref{DP}, and using the bound for $\epsi u$ in Proposition \ref{lbd}, we obtain that $\|Du\|_\infty$ is also bounded.
\end{proof}

When $d=1$, we can improve the preceding results to show that $m$ is bounded, as shown in the next proposition. The case $d\geq 2$ is discussed in the next section. 

\begin{pro}
Suppose that Assumption \ref{osc} holds. Let $d=1$. Then, there exists a constant, $C>0,$ such that  for any classical solution, $(u,m)$, of \eqref{DP}, we have
\begin{equation*}
\|m\|_{L^\infty(\Tt)}+\|u_x\|_{L^\infty(\Tt)}\leq C.
\end{equation*}
\end{pro}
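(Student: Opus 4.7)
The plan is to exploit the one-dimensional structure of the Fokker--Planck equation, which in $d=1$ integrates directly and delivers a sharp pointwise control on $m u_x$, avoiding the Moser iteration used for $d\geq 2$.

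First, I would rewrite the second equation in \eqref{DP} as
\[
(m u_x)_x = \epsi(m - 1) \qquad \text{in } \Tt,
\]
a step special to one space dimension, since in higher dimensions $\mathrm{div}(m Du)$ is not the derivative of a scalar function. Since $u \in C^2(\Tt)$ is periodic, it attains a maximum at some $x_0 \in \Tt$, and there $u_x(x_0) = 0$. Integrating the identity above from $x_0$ to $x$ would give
\[
m(x) u_x(x) = \epsi \int_{x_0}^x (m(y) - 1)\,\d y.
\]
Using $\int_\Tt m\,\d y = 1$ from Proposition~\ref{density} together with $m \geq 0$, we have $(1 - m)^+ \leq 1$ and $\int_\Tt (m-1)^+\,\d y = \int_\Tt (m-1)^-\,\d y$, so $\int_\Tt |m - 1|\,\d y \leq 2$. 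Consequently
\[
|m(x) u_x(x)| \leq 2\epsi, \qquad x \in \Tt.
\]

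Second, I would invoke Proposition~\ref{lbd}, which under Assumption~\ref{osc} provides a constant $m_0 > 0$ independent of $\epsi$ with $m \geq m_0$. Dividing the previous estimate by $m$ yields
\[
\|u_x\|_{L^\infty(\Tt)} \leq \frac{2\epsi}{m_0},
\]
which is uniformly bounded in $\epsi$ on any bounded range of discount rates (in fact vanishing as $\epsi \to 0$). To bound $m$ from above, I would rearrange the first equation in \eqref{DP} as
\[
g(m(x)) = \epsi u(x) + \tfrac{1}{2} u_x(x)^2 + V(x),
\]
whose right-hand side is uniformly controlled by $\|\epsi u\|_\infty + \tfrac{1}{2}\|u_x\|_\infty^2 + \|V\|_\infty$, finite by Proposition~\ref{lbd}, the previous step, and the regularity of $V$. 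Strict monotonicity of $g$ then gives $m(x) \leq g^{-1}(K) =: C$ for a constant $K$ independent of $\epsi$.

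\emph{Main obstacle.} The difficulty is conceptual rather than computational: recognizing that in $d=1$ the Fokker--Planck equation is a conservation law that integrates once, and that periodicity supplies a critical point of $u$ where the constant of integration is annihilated. Once this structural observation is made, no polynomial growth hypothesis such as Assumption~\ref{beta} (which drove the Moser iteration in Proposition~\ref{ubd} for $d\geq 2$) is required; the mere fact that $\int m = 1$ combined with the uniform lower bound on $m$ is enough.
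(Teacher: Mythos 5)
Your proof is correct and takes a genuinely different route from the paper's. The paper runs a second-order energy estimate: it multiplies the Hamilton--Jacobi equation by $m_{xx}$ and the transport equation by $u_{xx}$, subtracts and integrates by parts to obtain $\int_\Tt m u_{xx}^2 + g'(m)m_x^2\,\dx = \int_\Tt m_x V_x\,\dx$, absorbs the right-hand side by a weighted Cauchy inequality using the lower bound on $m$ (and, implicitly, a positive lower bound on $g'(m)$), and then gets $\|m\|_\infty$ from $\|m_x\|_{L^2}$ via the one-dimensional Sobolev embedding, with $\|u_x\|_\infty$ read off from the first equation. You instead integrate the conservation law $(mu_x)_x=\epsi(m-1)$ once from a critical point of $u$, getting the pointwise bound $|mu_x|\leq 2\epsi$ --- precisely the discounted analogue of the observation in Section \ref{aee} that $mu_x\equiv 0$ for the limit problem --- and close with Proposition \ref{lbd} and monotonicity of $g$. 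Your route is first-order and more elementary (no second derivatives, no use of $g'$ at all), and it buys something extra: $\|u_x\|_\infty\leq 2\epsi/m_0$ gives a rate, showing $u_x^\epsi\to 0$ uniformly, which is consistent with the fact that under Assumption \ref{osc} the $d=1$ limit solution has constant $u$.

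Two caveats, neither fatal. First, your final step applies $g^{-1}$ to $K=\|\epsi u\|_\infty+\tfrac12\|u_x\|_\infty^2+\|V\|_\infty$, which requires $K$ to lie in the range of $g$; this fails formally if $g$ is bounded above, so you should add that hypothesis (it is automatic under growth conditions in the spirit of Assumption \ref{beta}). Note the paper's proof has the symmetric implicit need that $g'$ be bounded below on $[m_0,\infty)$, so neither argument is fully self-contained under Assumption \ref{osc} alone. Second, your constant depends on an upper bound for $\epsi$ (it degrades like $\epsi$ and then $\epsi^2$ in the bound for $m$), whereas the paper's constant is uniform over all $\epsi>0$; since the purpose is the vanishing-discount limit $\epsi\to 0$, this is harmless, but it is a minor loss relative to the proposition as stated, and you were right to flag it.
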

\begin{proof}
Multiplying the first equation by $m_{xx}$ and the second by $u_{xx}$, we obtain
\begin{align*}
\begin{cases}
&\epsi um_{xx}+\frac{1}{2}m_{xx}u_x^2+m_{xx}V=g(m)m_{xx}\\
&\epsi mu_{xx}-u_{xx}(mu_x)_x=\epsi { u_{xx}}.
\end{cases}
\end{align*}
Next, we subtract these equations and integrate by parts to get 
\begin{align*}
\int_{\Tt} mu_{xx}^2+g'(m)m_x^2\,\dx=\int_{\Tt} m_xV_x dx\leq \delta \int_{\Tt} m_x^2 dx+\frac{1}{4\delta}\int_{\Tt} V_x^2 dx,
\end{align*}
using a weighted Cauchy-Schwarz inequality with $\delta >0$.
Because $m$ is bounded by below, taking $\delta>0$ sufficiently small,  $\|m_x\|_{L^2(\Tt)}$ and $\|u_{xx}\|_{L^2(\Tt)}$ are bounded. Thus, we get the desired result.
\end{proof}

\begin{pro}\label{regularity1d}
Suppose that Assumption \ref{osc} holds. Let $d=1$. Then, there exists a constant $C>0$ such that for any classical solution, $(u,m),$ of Problem \ref{P1}, we have
\begin{equation}\label{CCCX}
\|u_x\|_{C^{1,\alpha}(\Tt)}+\|m\|_{C^{1,\alpha}(\Tt)} \leq C.
\end{equation}
\end{pro}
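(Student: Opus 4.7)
The plan is to derive $C^{1,\alpha}$ bounds by exploiting the algebraic structure of the one-dimensional system. From Proposition \ref{lbd}, $m$ is uniformly bounded below by a positive constant $m_0$, and the preceding proposition yields uniform $L^\infty$ bounds on $m$ and $u_x$ together with $L^2$ bounds on $m_x$ and $u_{xx}$. Because $g \in C^{1,\alpha}((0,\infty))$ is strictly increasing and $m$ is confined to a fixed compact subset of $(0,\infty)$, $g'(m)$ is uniformly bounded below by some $c_0 > 0$ on the attained range of $m$.

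Next, I would differentiate the first equation in \eqref{DP} in $x$ to obtain
\[
g'(m)\, m_x = \epsi u_x + u_x u_{xx} + V_x,
\]
and expand the second equation as $m u_{xx} + m_x u_x = \epsi(m-1)$. Viewing this as a $2\times 2$ linear system in the unknowns $(m_x, u_{xx})$ and eliminating $u_{xx}$, we obtain
\[
m_x \bigl(g'(m) + u_x^2/m\bigr) = \epsi u_x (2m-1)/m + V_x,
\qquad u_{xx} = \frac{\epsi(m-1) - m_x u_x}{m},
\]
so that $m_x$ and $u_{xx}$ are algebraically expressed in terms of $m, u_x, V_x$. Since the denominator is bounded below by $c_0 > 0$ and since $m, u_x, V_x$ are uniformly bounded in $L^\infty$ with $m \geq m_0$, both $m_x$ and $u_{xx}$ are uniformly bounded in $L^\infty(\Tt)$. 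Hence $m$ and $u_x$ are uniformly Lipschitz, and in particular uniformly in $C^{0,\alpha}(\Tt)$.

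Finally, I would bootstrap once more using the same identity. With $m, u_x$ uniformly $C^{0,\alpha}$, $V_x \in C^{0,\alpha}(\Tt)$ since $V \in C^{1,\alpha}(\Tt)$, and $g' \in C^{0,\alpha}$ on the compact interval $[m_0,\|m\|_\infty]$, the numerator and denominator in the algebraic formula for $m_x$ belong to $C^{0,\alpha}(\Tt)$ with uniform bounds; the uniform lower bound on the denominator then yields $m_x \in C^{0,\alpha}$, i.e.\ $m \in C^{1,\alpha}$, uniformly in $\epsi$. Substituting back into the formula for $u_{xx}$ gives $u_{xx} \in C^{0,\alpha}$ uniformly, i.e.\ $u_x \in C^{1,\alpha}$, which is precisely \eqref{CCCX}. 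The delicate point is ensuring that the denominator $g'(m) + u_x^2/m$ stays uniformly away from zero; this reduces to a uniform positive lower bound on $g'(m)$ over the compact range of $m$, which follows from strict monotonicity and the $C^{1,\alpha}$ regularity of $g$ (and is made quantitative, for instance, by the pointwise lower bound in Assumption \ref{beta}).
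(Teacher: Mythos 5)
Your proof is correct and takes essentially the same route as the paper: differentiate the Hamilton--Jacobi equation, use the transport equation to eliminate $u_{xx}$ and arrive at the algebraic identity $m_x\bigl(g'(m)m+u_x^2\bigr)=2\epsi m u_x-\epsi u_x+mV_x$ (your formula is this one divided by $m$), bound $m_x$ and then $u_{xx}$ in $L^\infty$ using the positivity of the denominator, and bootstrap the same identities once more to obtain the uniform $C^{1,\alpha}$ bounds. If anything, you are more careful than the paper on the one delicate point---the uniform lower bound on the denominator, which genuinely requires $g'>0$ on the compact range of $m$ (strict monotonicity plus $C^{1,\alpha}$ regularity alone does not give this, so your fallback to the quantitative bound of Assumption \ref{beta} is the right fix), whereas the paper dismisses it with ``the denominator does not vanish.''
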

\begin{proof}
Differentiating the first equation in \eqref{DP}and multiplying by $m$, we get
\begin{equation}
\epsi mu_x+u_xmu_{xx}+mV_x=g'(m)mm_x.
\end{equation}
Solving the second equation in \eqref{DP} for $mu_{xx}$ and substituting in the above identity, we have
\begin{equation}\label{CCCY}
m_x=\frac{2\epsi mu_x-\epsi u_x+mV_x.}{(u_x^2+g'(m)m)}.
\end{equation}
Because $m$ is bounded by below, the denominator in the preceding expression does not vanish. Thus, from the previous Proposition, the right-hand side is bounded. Accordingly, $\|m_x\|_{L^{\infty}(\Tt)}$ is bounded. Returning to the second equation \eqref{DP}, we see that  $\|u_{xx}\|_{L^{\infty}(\Tt)}$ is bounded. {Returning to \eqref{CCCY}, we see that $\|m\|_{C^{1,\alpha}(\Tt)}$ is bounded. Thus, from the second equation in \eqref{DP}, we gather that $\|u_x\|_{C^{1,\alpha}(\Tt)}$ is bounded.}
\end{proof}

\section{Estimates in higher dimensions}
\label{esti2}

Now, we obtain additional estimates for the solutions of \eqref{DP} in the case $d\geq2$. 
As in the previous section, 	
to simplify the notation, we omit the $\epsilon$ in 
$(u^\epsi,m^\epsi)$
and denote by $(u, m)$ 
a  
solution of Problem \ref{P1}.
First, by solving the first equation in \eqref{DP} for $m$, we get
   \begin{equation*}
   m=g^{-1}\left(\epsi u +\frac{1}{2}|Du|^2+V \right).
   \end{equation*}
Next, replacing the resulting expression into the second equation in \eqref{DP}, we obtain
\begin{equation}\label{EL}
 \mathrm{div}\left[g^{-1}\left(\epsi u+\frac{|Du|^2}{2}+ V\right)Du\right]-\epsi \left[g^{-1}\left(\epsi u+\frac{|Du|^2}{2}+ V\right)-1\right]=0.
 \end{equation} 
Here, we apply the DeGiorgi-Nash-Moser regularity method to \eqref{EL} to obtain our estimates. 

We begin by selecting $k$ with $1\leq k \leq d$. Differentiating \eqref{EL} with respect to $x_k$, we conclude that $v=u_{x_k}$ solves
\begin{equation}\label{MMDP}
(a^{ij}v_{x_j})_{x_i}=\phi_{x_k}+\psi_{x_i},
\end{equation}
where
\begin{equation*}
a^{ij}(x)=g^{-1}\left(\epsi u+\frac{1}{2}|Du|^2+V\right)\delta_{ij}+(g^{-1})'\left(\epsi u+\frac{1}{2}|Du|^2+V\right)u_{x_i}u_{x_j},
\end{equation*}
\begin{equation*}
 \phi(x)=\epsi \left[{g^{-1}(\epsi u+\frac{1}{2}|Du|^2+V)-1}\right],
\end{equation*}
\begin{equation*}
\psi(x)=(g^{-1})'(\epsi u+\frac{1}{2}|Du|^2+V)(\epsi u_{x_i}u_{x_k}+u_{x_i}V_{x_k}), 
\end{equation*}
and $\delta_{ij}=1$ if $i=j$ and $\delta_{ij}=0$ otherwise. 
Because of Propositions \ref{lbd} and \ref{ubd}, 
there exists a constant, $C>0$, such that for any classical solution, $u$, of \eqref{EL}, we have $\|\epsi u\|_{\infty} +\|Du\|_{\infty} \leq C$. Hence, we get
\begin{equation}
\label{limfb}
\| \phi \|_{\infty}+\| \psi \|_{\infty} \leq C. 
\end{equation}
Moreover, using again Propositions \ref{lbd} and \ref{ubd}, we see that there exists a constant $\lambda>0$ such that  for all $\xi \in \Rr^d$, we have
\begin{equation}\label{elliptic}
\lambda|\xi|^2\leq \sum^d_{i,j=1} a^{ij}(x)\xi_i \xi_j \leq \frac{1}{\lambda} |\xi|^2.
\end{equation}

Next, we prove that $v$ is H\"older-continuous and, thus, get higher regularity for $u$.

\begin{pro}\label{regularity}
Suppose that Assumptions \ref{osc} and \ref{beta} hold. Let $d\geq2$. Then, there exist constants, $C>0$ and $0<\alpha<1$, such that for any classical solution, $u$, of \eqref{EL}, we have
\begin{equation*}
\|Du\|_{C^{1,\alpha}(\Tt^d)} \leq C.
\end{equation*}
\end{pro}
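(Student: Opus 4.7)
The strategy is to apply De Giorgi–Nash–Moser regularity to equation \eqref{MMDP} to upgrade $Du$ from $L^\infty$ to $C^{0,\alpha}$, and then to bootstrap via Schauder theory on the same equation to obtain $Du\in C^{1,\alpha}$.

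\medskip

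\noindent\textbf{Step 1 (De Giorgi–Nash–Moser).} Fix $k\in\{1,\dots,d\}$ and consider $v=u_{x_k}$. By \eqref{MMDP}, $v$ satisfies the linear divergence-form equation
\[
(a^{ij}v_{x_j})_{x_i}=(\delta_{ik}\phi+\psi^{i})_{x_i},
\]
where the right-hand side is the divergence of a bounded vector field by \eqref{limfb}. The coefficient matrix $(a^{ij})$ is uniformly elliptic with ellipticity constant $\lambda$ independent of $\epsi$ by \eqref{elliptic}. Since the $L^\infty$-norms of $v$, the coefficients and the data are all controlled by constants depending only on $g$, $V$, $d$ (via Propositions \ref{lbd} and \ref{ubd}), the classical interior De Giorgi–Nash–Moser estimate (adapted to the torus via a covering argument) yields $\alpha\in(0,1)$ and $C>0$, independent of $\epsi$ and of the particular solution, such that $\|v\|_{C^{0,\alpha}(\Tt^d)}\leq C$. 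Applying this to every $k$ gives $\|Du\|_{C^{0,\alpha}(\Tt^d)}\leq C$.

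\medskip

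\noindent\textbf{Step 2 (Schauder bootstrap).} With $Du\in C^{0,\alpha}$ uniformly, and since $g^{-1},\,(g^{-1})'\in C^{0,\alpha}$ on the range determined by the bounds on $\epsi u+\tfrac12|Du|^2+V$ (here we use Assumption \ref{beta} to keep the argument away from the singular set of $g^{-1}$, namely away from $0$ via Proposition \ref{lbd}), every building block of $a^{ij}$, $\phi$ and $\psi^i$ is $C^{0,\alpha}(\Tt^d)$ with norm bounded uniformly in $\epsi$. The equation for $v$ thus has $C^{0,\alpha}$-coefficients and a right-hand side equal to the divergence of a $C^{0,\alpha}$ vector field. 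The standard Schauder estimate for divergence-form elliptic equations (see Gilbarg–Trudinger, Thm.~8.32 together with a difference-quotient argument, or Giaquinta–Martinazzi) gives $v\in C^{1,\alpha}(\Tt^d)$ with norm controlled by the $C^{0,\alpha}$ norms of the data; in particular, independently of $\epsi$. Taking $k=1,\dots,d$ concludes $\|Du\|_{C^{1,\alpha}(\Tt^d)}\leq C$.

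\medskip

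\noindent\textbf{Main obstacle.} The delicate point is Step 1: one must verify that the right-hand side of \eqref{MMDP} fits the divergence-form De Giorgi framework and that all constants (ellipticity $\lambda$, $\|a^{ij}\|_\infty$, $\|\phi\|_\infty$, $\|\psi\|_\infty$) are genuinely independent of $\epsi$. This follows from Propositions \ref{lbd} and \ref{ubd} (bounds on $\epsi u$, $Du$, $m$, $1/m$ uniform in $\epsi$) combined with Assumption \ref{beta} to ensure that $(g^{-1})'$ stays bounded on the relevant compact range. Once this uniform ellipticity and uniform boundedness is in place, the De Giorgi–Nash–Moser Hölder exponent $\alpha$ and the associated constant depend only on $d$, $\lambda$ and these bounds, and the subsequent Schauder step is routine.
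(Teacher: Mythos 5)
Your proof is correct and takes essentially the same approach as the paper: uniform ellipticity and bounded divergence-form data for \eqref{MMDP} coming from Propositions \ref{lbd} and \ref{ubd} and \eqref{limfb}, a De Giorgi--Nash--Moser H\"older estimate for $v=u_{x_k}$ with constants independent of $\epsi$, followed by a Schauder bootstrap to reach $C^{1,\alpha}$. The only difference is presentational: where you invoke the packaged H\"older estimate (Gilbarg--Trudinger, Thm.~8.24-type), the paper proves it by hand via the splitting $v=z+w$, a Stampacchia level-set iteration giving $\|z\|_{L^\infty(B_{2R})}\leq CR$, the oscillation decay of Gilbarg--Trudinger Thm.~8.22 for the homogeneous part $w$, and a dyadic iteration yielding $\mathrm{osc}(R,v)\leq CR^{\alpha}$.
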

\begin{proof}
Take $R>0$. Let $v$ solve \eqref{MMDP}. Write $v=z+w$ where $z$ is a solution of 
\begin{equation}\label{eq1}
(a^{ij}z_{x_j})_{x_i}=\phi_{x_k}+\psi_{x_i},
\end{equation}
in $B_{2R}$ and $z=0$ on $\partial B_{2R}$. Therefore, $w$ solves
\begin{equation}\label{eq2}
(a^{ij}w_{x_j})_{x_i}=0,
\end{equation}
in $B_{2R}$ with $w|_{B_{2R}}=v$. 

We begin by establishing the following claim.
\begin{claim}
For $d>2$, 	
there exists a constant, $C>0$, that depends only on the bounds in 
\eqref{elliptic} such that 
\begin{equation*}
\| z \|_{L^\infty(B_{2R})} \leq CR \quad \text{for any } R>0.
\end{equation*}
\end{claim}
\begin{remark}
If $d=2$, we get $\| z \|_{L^\infty(B_{2R})} \leq \bar{C}R^\kappa$, for any $\kappa<\frac 1 2$.
This difference is due to the exponent $2^*$ in dimension $2$ being replaced by an 
arbitrarily large constant. 
The argument that follows
needs to be adapted accordingly, namely the bound in \eqref{Mb} below, but the key steps remain unchanged. This case will be omitted. 
\end{remark}

Let $k\geq0$. By multiplying \eqref{eq1} by $(z-k)^+$ and integrating by parts, we get
\begin{equation}
\label{opop}
\int_{B_{2R}} a^{ij}z_{x_j}(z-k)^+_{x_i}\,\d x=\int_{B_{2R}}\phi(z-k)^+_{x_k}+\psi(z-k)^+_{x_i}\,\d x.
\end{equation}
Set
\begin{equation*}
A(k)=\{ z>k\}\cap B_{2R}.
\end{equation*}
It suffices to prove that we can choose a constant $C>0$ satisfying $|A(CR)|=0$. 
Because $(z-k)^+_{x_i}=0$ on $A(k)^C$ and $(z-k)^+_{x_i}=z_{x_i}$ on $A(k)$, 
we obtain from \eqref{opop}
that
\begin{equation*}
\int_{A(k)} a^{ij}z_{x_j}z_{x_i}\,\dx=\int_{A(k)}\phi z_{x_k}+\psi z_{x_i}\,\dx.
\end{equation*}
In view of \eqref{elliptic} and the bounds in \eqref{limfb}, we get 
\begin{equation*}
 \int_{A(k)}|Dz|^2\,\dx \leq C |A(k)| .
\end{equation*}
Next, using Sobolev's inequality and taking into account that 
$(z-k)^+=0$ on $\partial B_{2R}$, we conclude that, for any $h>k$,
\begin{align*}
(h-k)^2|A(h)|^{2/2^*} &
\leq \left[ \int_{A(h)} [(z-k)^+]^{2^*}\,\dx\right]^{2/2^*}
\leq \left[\int_{B_{2R}} [(z-k)^+]^{2^*}\,\dx \right]^{2/2^*}\\
&\leq C\int_{B_{2R}} |D(z-k)^+|^2\,\dx \leq C\int_{A(k)}|Dz|^2\,\dx.
\end{align*}
Combining the two preceding estimates, we obtain
\begin{equation*}
|A(h)|\leq \frac{C|A(k)|^{\frac{2^*}{2}}}{(h-k)^{2^*}}.
\end{equation*}
Next, we take a sequence $k_n=M\left(1-\frac 1 {2^n}\right)$, where 
\begin{equation*}
M=\left(C|A(0)|^{\frac{2^*}{2}-1} 2^{\frac{(2^*)^2}{2^*-2}}\right)^{\frac{1}{2^*}}. 
\end{equation*}
Using the above estimate, we obtain 
\begin{equation*}
|A(k_{n+1})|\leq \frac{C}{(k_{n+1}-k_n)^{2^*}} |A(k_n)|^{{2^*}/2}\leq C\frac{2^{2^*(n+1)}}{M^{2^*}}|A(k_n)|^{{2^*}/2}.
\end{equation*}
We now prove by induction that
\begin{equation*}
|A(k_n)|\leq |A(0)|2^{-n\mu},
\end{equation*}
where $\mu=\frac{2^*}{\frac{2^*}{2}-1}$. The case $n=0$ is clear.
Assume our claim holds for some $n$, 
we have to check that it holds for $n+1$. 
We have
\begin{align*}
|A(k_{n+1})|
&\leq C\frac{2^{2^*(n+1)}}{M^{2^*}} \big(|A(0)|2^{-n\mu}\big)^{2^*/2}= |A(0)|2^{-(n+1)\mu},
\end{align*}
using our choice for $M$ and $\mu$. 

Finally, by considering the limit $n \to \infty$, we get $|A(M)|=0$. If $d>2$, we have
\begin{equation}
\label{Mb}
M=\hat{C}|A(0)|^{1/d}\leq \hat{C}|B_{2R}|^{1/d}=\bar{C}R.
\end{equation}
Thus, we get Claim 1.

Next, using the ellipticity bounds in \eqref{elliptic}, we apply the DeGiorgi-Nash-Moser estimate (see, \cite{GilTru}, Theorem $8.22$) to \eqref{eq2} to establish the following claim.
\begin{claim}
We have 
\begin{equation*}
\mathrm{osc}(\frac{R}{2},w)\leq \eta \, \mathrm{osc}(R,w),
\end{equation*}
for some constant $0<\eta<1$, where we denoted
\begin{equation*}
\mathrm{osc}(R,w):=\sup_{B_R}w-\inf_{B_R}w.
\end{equation*}
\end{claim}

Combining the two preceding claims, we obtain the following estimate:
\begin{align}\label{FF}
\mathrm{osc}(R/4,v)
&\leq CR+\mathrm{osc}(R/4,w) \leq \hat{C}R+\eta \, \mathrm{osc}(R,v).
\end{align}
\begin{claim}
\label{c3}
There exist constants $C>0$ and $0<\alpha<1$ such that for all $0<R<1$, we have
\begin{equation*}
\mathrm{osc}(R,v)\leq CR^{\alpha}.
\end{equation*}
\end{claim}
Set
\begin{equation*}
M_n=\sup_{\frac{1}{4^{n+1}}\leq R\leq \frac{1}{4^n}} \frac{\mathrm{osc}(R, v)}{R^{\alpha}},
\end{equation*}
where $\alpha$ satisfies $0<\alpha<-\frac{\log \eta}{\log 4}$.

Here, we prove by induction that there exist $\mu>1$ satisfying 
\begin{equation}\label{GG}
M_n \leq M\mu^{-n}
\end{equation}
for some sufficiently large $M>0$.
We choose $\mu$ satisfying 
\begin{equation}
\label{muchoice}
1<\mu<\min(4^{1-\alpha},\frac{1}{4^\alpha \eta}), 
\end{equation}
and then we choose $M> \frac{2}{4^\alpha}\|v\|_{L^\infty(\Tt^d)}$ and such that 
\begin{equation}
\label{Mchoice}
\left[ \hat{C}\left(\frac{\mu}{4^{1-\alpha}}\right)^{n+1}+\eta 4^\alpha M\mu\right] \leq M.
\end{equation}
The prior choice of $M$ is possible due to \eqref{muchoice}. 

For $n=0$, $M_0 \leq \frac{2}{4^\alpha}\|v\|_{L^\infty(\Tt^d)}\leq M$.
Next, we assume that \eqref{GG} holds for some $n\geq 0$ and verify that it also holds for $n$ replaced by 
$n+1$. Using \eqref{FF}, we have that
\begin{equation*}
M_{n+1}\leq \left[ \hat{C}\left(\frac{\mu}{4^{1-\alpha}}\right)^{n+1}+\eta 4^\alpha M\mu\right] \mu^{-(n+1)}.
\end{equation*}
Using the defining property of $M$, \eqref{Mchoice}, we get \eqref{GG}.

Finally, for $0<R<1$, combining \eqref{FF} and \eqref{GG}, we obtain
\begin{equation*}
\mathrm{osc}(R,v) \leq (4\hat{C}+\eta 4^\alpha M)R^\alpha, 
\end{equation*}
which establishes the claim.

\begin{claim}
\begin{equation*}
\|Du\|_{C^{1,\alpha}(\Tt^d)} \leq C.
\end{equation*}
\end{claim}
Due to Claim \ref{c3},  we get $\|Du\|_{C^{0,\alpha}(\Tt^d)} \leq C$. Since \eqref{MMDP} is a uniformly elliptic equation and the H\"older-norm of the coefficients is bounded. Therefore, it follows from  Schauder's estimate that $\|Dv\|_{C^{0,\alpha}(\Tt^d)} \leq C$. Hence, we conclude that  $\|Du\|_{C^{1,\alpha}(\Tt^d)}\leq C$.
\end{proof}

\section{Existence of solution for the discounted problem}\label{pthm}
Here, we prove the existence of classical solution for \eqref{EL}, 
which is 
equivalent to \eqref{DP}, using the continuation method. We begin by defining an operator, $J: C^{2,\alpha}(\Tt^d) \times [0,1] \to C^{0,\alpha}(\Tt^d)$, by
\begin{equation*}
J(u,\lambda)=
\mathrm{div}\left[g^{-1}\left(\epsi u+\frac{|Du|^2}{2}+\lambda V\right)Du\right]-\epsi \left[ g^{-1}\left(\epsi u+\frac{|Du|^2}{2}+\lambda V\right)-1\right].
\end{equation*}
We set
\begin{equation}
\label{lamb}
 \Lambda=\big\{\lambda \in [0,1]:
\exists u_\lambda \in C^{2,\alpha}(\Tt^d): \:J(u_\lambda ,\lambda)=0 \big\}.
\end{equation}
We claim that $\Lambda=[0,1]$. First, we observe that  $0\in \Lambda$. In fact, for $u_0 \equiv \epsi^{-1}g(1)$, we have $J(u_0,0)=0$.  Accordingly, $\Lambda$\ is non-empty. Thus, it suffices to check that $\Lambda$ is relatively open and closed in $[0,1]$, to get $\Lambda=[0,1]$. In the next proposition, 
we verify that $\Lambda$ is a closed set. 

\begin{pro}
Consider the setting of Problem \ref{P1} and suppose that Assumptions \ref{osc} and \ref{beta} hold.
Let $\Lambda$ as in \eqref{lamb}. Then, $\Lambda$ is relatively closed in $[0,1]$. 
\end{pro}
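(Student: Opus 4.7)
The plan is to take a sequence $\{\lambda_n\}\subset\Lambda$ with $\lambda_n\to\lambda^*\in[0,1]$, use the a priori estimates from Sections \ref{esti} and \ref{esti2} to produce a uniform $C^{2,\alpha}$ bound on the corresponding solutions $u_{\lambda_n}$, extract a convergent subsequence by Arzel\`a--Ascoli, and pass to the limit in the equation $J(u_{\lambda_n},\lambda_n)=0$.

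First, for each $n$, let $u_n:=u_{\lambda_n}\in C^{2,\alpha}(\Tt^d)$ satisfy $J(u_n,\lambda_n)=0$, and set
\[
m_n:=g^{-1}\left(\epsi u_n+\tfrac{1}{2}|Du_n|^2+\lambda_n V\right).
\]
Then $(u_n,m_n)$ is a classical solution of the system \eqref{DP} with $V$ replaced by $\lambda_n V$. The point I would stress here is that all the a priori estimates obtained in Sections \ref{esti} and \ref{esti2} go through uniformly in $n$. Indeed, Assumption \ref{beta} is a condition on $g$ alone, and $\mathrm{osc}(\lambda_n V)=\lambda_n\,\mathrm{osc} V\leq \mathrm{osc} V$, so Assumption \ref{osc} for $V$ implies the corresponding inequality $g^{-1}\bigl(g(1)-\mathrm{osc}(\lambda_n V)\bigr)>0$ uniformly in $n$. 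Thus Propositions \ref{lbd}, \ref{ubd}, and \ref{regularity} (or \ref{regularity1d} if $d=1$) yield a constant $C>0$, independent of $n$, such that
\[
\|\epsi u_n\|_{L^\infty(\Tt^d)}+\|Du_n\|_{C^{1,\alpha}(\Tt^d)}+\left\|\tfrac{1}{m_n}\right\|_{L^\infty(\Tt^d)}+\|m_n\|_{L^\infty(\Tt^d)}\leq C.
\]
Since $\epsi$ is fixed in this problem, this gives $\|u_n\|_{C^{2,\alpha}(\Tt^d)}\leq C/\epsi + C$, so $\{u_n\}$ is bounded in $C^{2,\alpha}(\Tt^d)$.

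Next, I would invoke the compact embedding $C^{2,\alpha}(\Tt^d)\hookrightarrow C^{2,\alpha'}(\Tt^d)$ for any $\alpha'<\alpha$ (via Arzel\`a--Ascoli) to extract a subsequence, still denoted $u_n$, converging in $C^{2,\alpha'}(\Tt^d)$ to some $u^*$. Lower semicontinuity of the $C^{2,\alpha}$ seminorm under uniform convergence then places $u^*$ in $C^{2,\alpha}(\Tt^d)$. Uniform positive lower and upper bounds on $m_n$ imply that the argument of $g^{-1}$ stays in a compact subset of $(0,\infty)$ where $g^{-1}$ is $C^{1,\alpha}$; hence $J$ is continuous on the relevant set, and passing to the limit in $J(u_n,\lambda_n)=0$ gives $J(u^*,\lambda^*)=0$. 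Therefore $\lambda^*\in\Lambda$, and $\Lambda$ is relatively closed in $[0,1]$.

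The main obstacle is the uniformity of the a priori estimates in the homotopy parameter $\lambda$. This is handled by the observation above that the assumptions used to derive those estimates are monotone with respect to the size of the potential, so the bounds for the problem with potential $\lambda V$ are controlled by the bounds for the problem with potential $V$ itself. Once this uniformity is in place, the rest is a standard compactness and continuity argument.
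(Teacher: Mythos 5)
Your proof is correct and takes essentially the same approach as the paper: uniform a priori bounds (Propositions \ref{lbd}, \ref{ubd}, and \ref{regularity}, or Proposition \ref{regularity1d} when $d=1$), compactness via Arzel\`a--Ascoli, and passage to the limit in $J(u_n,\lambda_n)=0$. Your explicit verification that Assumption \ref{osc} holds uniformly for the potentials $\lambda_n V$ (since $\mathrm{osc}(\lambda_n V)\leq \mathrm{osc}\, V$ and $g^{-1}$ is increasing), together with the lower-semicontinuity remark placing the limit in $C^{2,\alpha}(\Tt^d)$, supplies details the paper leaves implicit.
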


\begin{proof}
Fix a sequence $\lambda_n \in \Lambda$ converging to $\lambda\in [0,1]$
as $n \to \infty$. We must show that $\lambda \in \Lambda$. For that,
take $u_{\lambda_n}$ satisfying $J(u_{\lambda_n},\lambda_n)=0$. The a priori bounds in  Proposition \ref{regularity1d} (for $d=1$)
or Proposition \ref{regularity} (for $d>1$)
 guarantee that there exists a subsequence of $\{ u_{\lambda_n} \}_{n \in \Nn} $ converging to some $u \in C^{2,\alpha}(\Tt^d)$. By passing to the limit, we  conclude that $J(u,\lambda)=0$. Accordingly,  $\lambda \in \Lambda$.
\end{proof}

Now, we show that $\Lambda$ is relatively open. For each $\lambda \in \Lambda$, let $u_\lambda\in C^{2,\alpha}(\Tt^d)$ solve $J(u_\lambda,\lambda)=0$
and set
\begin{equation}
\label{mlam}
m_\lambda=g^{-1}\left(\epsi
u_\lambda+\frac{|Du_\lambda|^2}{2}+\lambda V\right).
\end{equation}
We consider the linearization
of $J$\ around this solution  and define $L_\lambda: C^{2,\alpha}(\Tt^d)
 \to  C^{0,\alpha}(\Tt^d)$ for $\phi \in C^{2,\alpha}(\Tt^d)$ by
\begin{align}
\label{ll}
&L_\lambda(\phi)=\frac{\partial J}{\partial \mu}(u_\lambda+\mu \phi,\lambda)\bigg|_{\mu=0}
\\\notag
&= \mathrm{div}\left[m_\lambda D\phi +(g^{-1})'(g(m_\lambda))(\epsi \phi+Du_\lambda
\cdot D\phi)Du_\lambda \right]-\epsi (g^{-1})'(g(m_\lambda))(\epsi \phi
+Du_\lambda \cdot D\phi). 
 \end{align}

\begin{lem}
	\label{isolem}
Consider the setting of Problem \ref{P1}.	
	Let $u_\lambda\in C^{2,\alpha}(\Tt^d)$ solve $J(u_\lambda,\lambda)=0$ and
	let $L_\lambda$ be given by \eqref{ll}. Then,
$L_\lambda$ is an isomorphism between $C^{2,\alpha}(\Tt^d)
 $ and $C^{0,\alpha}(\Tt^d)$ .
\end{lem}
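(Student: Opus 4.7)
The plan is to prove that $L_\lambda$ is an isomorphism by combining three ingredients: (i) uniform ellipticity of $L_\lambda$ together with the a priori regularity of $(u_\lambda, m_\lambda)$, which makes $L_\lambda$ Fredholm of index zero between $C^{2,\alpha}(\Tt^d)$ and $C^{0,\alpha}(\Tt^d)$; (ii) injectivity of $L_\lambda$ proved via a linearized Lasry--Lions monotonicity argument; and (iii) the Fredholm alternative to convert injectivity into surjectivity.

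First, I would rewrite $L_\lambda$ in a form that exposes its elliptic structure. Setting $\gamma_\lambda := (g^{-1})'(g(m_\lambda)) = 1/g'(m_\lambda) > 0$, a direct expansion of the divergence in \eqref{ll} gives
$$L_\lambda \phi = \operatorname{div}(A_\lambda D\phi) + b_\lambda \cdot D\phi + c_\lambda \phi,$$
where $A_\lambda = m_\lambda I + \gamma_\lambda\, Du_\lambda \otimes Du_\lambda$ and $b_\lambda, c_\lambda$ are explicit functions of $m_\lambda$, $Du_\lambda$ and their derivatives. By Proposition \ref{lbd} we have $m_\lambda \geq m_0 > 0$ (the bound depends only on the oscillation of $\lambda V \leq V$ and thus is uniform in $\lambda$), and $\gamma_\lambda > 0$, so $A_\lambda$ is uniformly positive definite. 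The regularity from Propositions \ref{regularity1d} and \ref{regularity} gives $u_\lambda \in C^{2,\alpha}(\Tt^d)$, $m_\lambda \in C^{1,\alpha}(\Tt^d)$, whence $A_\lambda, b_\lambda, c_\lambda \in C^{0,\alpha}(\Tt^d)$. Standard elliptic theory on the compact manifold $\Tt^d$ (Schauder estimates plus compact embedding) then yields that $L_\lambda : C^{2,\alpha}(\Tt^d) \to C^{0,\alpha}(\Tt^d)$ is Fredholm of index $0$.

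Second, to prove injectivity, suppose $L_\lambda \phi = 0$ and set
$$\theta := \gamma_\lambda \bigl(\epsilon \phi + Du_\lambda \cdot D\phi\bigr),$$
so that $(\phi, \theta)$ solves the linearized MFG system
$$\epsilon \phi + Du_\lambda \cdot D\phi = g'(m_\lambda)\theta, \qquad \operatorname{div}(m_\lambda D\phi + \theta\, Du_\lambda) = \epsilon \theta.$$
Multiplying the first equation by $\theta$ and the second by $\phi$, integrating by parts on $\Tt^d$ and adding the two identities, the cross terms $\theta\, Du_\lambda \cdot D\phi$ and $\epsilon \phi \theta$ cancel, leaving
$$\int_{\Tt^d} m_\lambda |D\phi|^2 + g'(m_\lambda)\theta^2 \, dx = 0.$$
This is precisely the linearization of \eqref{identity} in Section \ref{llm}. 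Since $m_\lambda > 0$ and $g'(m_\lambda) > 0$, we conclude $\theta \equiv 0$ and $D\phi \equiv 0$, hence $\phi$ is constant; inserting back into the first equation gives $\epsilon \phi = g'(m_\lambda)\theta = 0$, and $\epsilon > 0$ forces $\phi \equiv 0$.

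Finally, the Fredholm alternative applied to the index-zero operator $L_\lambda$ turns injectivity into bijectivity, and the open mapping theorem (equivalently, the Schauder a priori bound combined with the triviality of the kernel) supplies the bounded inverse. The only delicate step is the second one: one must identify the correct pairing that cancels the indefinite cross terms and produces a definite-sign identity. Since this is exactly the linearization of the Lasry--Lions monotonicity already established in Section \ref{llm}, the argument requires no new conceptual input.
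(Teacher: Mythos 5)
Your proof is correct, but it reaches the conclusion by a genuinely different route than the paper. The paper never invokes Fredholm theory: it solves $L_\lambda \phi = \xi$ directly in $H^1(\Tt^d)$ by introducing the bilinear form $B_\lambda$ associated with \eqref{ll}, representing it via the Riesz theorem as $B_\lambda[v,w]=(Av,w)$, proving the lower bound $\|Av\|_{H^1}\geq c\|v\|_{H^1}$ by contradiction from the positivity of
$B_\lambda[v,v]=\int_{\Tt^d} m_\lambda|Dv|^2+(g^{-1})'(g(m_\lambda))\left(\epsi v+Du_\lambda\cdot Dv\right)^2 dx$,
showing that $R(A)$ is closed and equal to all of $H^1(\Tt^d)$, and finally upgrading the resulting weak solution to $C^{2,\alpha}(\Tt^d)$ by elliptic regularity. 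Your injectivity step is this same quadratic form in disguise: with your $\theta=\gamma_\lambda(\epsi\phi+Du_\lambda\cdot D\phi)$ and $\gamma_\lambda=1/g'(m_\lambda)$ one has $g'(m_\lambda)\theta^2=(g^{-1})'(g(m_\lambda))(\epsi\phi+Du_\lambda\cdot D\phi)^2$, so your integral identity is exactly $B_\lambda[\phi,\phi]=0$; recognizing it as the linearized Lasry--Lions monotonicity is an accurate and clarifying way to see why the cross terms cancel. The real difference lies in converting uniqueness into existence: you outsource it to index-zero Fredholm theory in H\"older spaces, while the paper constructs the inverse by hand in $H^1$, which has the side benefit of a quantitative bound on $L_\lambda^{-1}$. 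Two minor cautions about your version: first, writing $L_\lambda=\div(A_\lambda D\cdot)+b_\lambda\cdot D+c_\lambda$ with $b_\lambda,c_\lambda\in C^{0,\alpha}(\Tt^d)$ requires differentiating $\gamma_\lambda=1/g'(m_\lambda)$, hence a $g''$, which the standing hypothesis $g\in C^{1,\alpha}$ does not literally provide --- though the paper's final Schauder upgrade to $C^{2,\alpha}$ tacitly needs the same extra smoothness of the coefficients, so the two arguments are on equal footing there; second, for a fixed $\lambda$ you do not need Proposition \ref{lbd} to bound $m_\lambda$ from below, since positivity and continuity on the compact torus suffice, which is all the paper's proof uses.
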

\begin{proof} We must prove that for any $\xi \in C^{0,\alpha}(\Tt^d)$, 
the equation
\begin{align}\label{LP}
&\mathrm{div}\left[m_\lambda D\phi +(g^{-1})'(g(m_\lambda))(\epsi \phi+Du_\lambda
\cdot D\phi)Du_\lambda \right]\\\notag &-\epsi (g^{-1})'(g(m_\lambda))(\epsi \phi
+Du_\lambda \cdot D\phi)=\xi
\end{align}
has a unique solution, $\phi \in C^{2,\alpha}(\Tt^d)$. We define $B_\lambda:
H^1(\Tt^d) \times H^1(\Tt^d) \to \mathbb{R}$ by
\begin{align*}
 B_\lambda[v,w]
=&\int_{\Tt^d} Dw\left[ m_\lambda Dv +(g^{-1})'(g(m_\lambda))(\epsi v+Du_\lambda
\cdot Dv)Du_\lambda \right] \\&+\epsi\int_{\Tt^d} w(g^{-1})'(g(m_\lambda))(\epsi v +Du_\lambda
\cdot Dv)\dx.
\end{align*}
Note that if $v$ and $w$ are smooth, $B_\lambda$ becomes
$B_\lambda [v,w]=(L_\lambda(v),w)_{L^2}$.
Using H\"older inequality, we see that $B_\lambda$ is bounded. 
Now, using Riesz's Representation Theorem, we see that there exists a bounded
linear operator $A: H^1(\Tt^d) \to H^1(\Tt^d)$ such that $B_\lambda [v,w]=(Av,w)$
for all $w \in H^1(\Tt^d)$. We divide the rest of the proof in the following three claims. 

\begin{claim}
There exists a constant, $c>0,$ such that $ \|Av\|_{H^1(\Tt^d)} \geq c\|v\|_{H^1(\Tt^d)}$ for all $v\in
H^1(\Tt^d)$.
\end{claim}
We establish this claim by contradiction. 
For that, suppose that 
there exists $\{v_n\}_{n\in \Nn} \subset H^1(\Tt^d)$ with
$\|v_n\|_{H^1(\Tt^d)}=1$ and $Av_n \to 0$.
Then,
\[
B_\lambda[v_n,v_n]=(Av_n,v_n) \to 0.
\]
Next, we have
\begin{equation*}
B_\lambda [v_n,v_n]=\int_{\Tt^d}  m_\lambda |Dv_n|^2+(g^{-1})'(g(m_\lambda))\{
\epsi v_n+Du_\lambda \cdot Dv_n\}^2 \dx.
\end{equation*}
Since $m_\lambda$ and $(g^{-1})'$ are positive, we see that $Dv_n \to 0$ and
$\epsi v_n+Du_\lambda \cdot Dv_n \to 0$ in $L^2(\Tt^d)$. Hence, we can construct
a subsequence $\{v_{n_k}\}_{k\in \Nn}$  satisfying $v_{n_k} \to 0$ in
$H^1(\Tt^d)$, which contradicts
$\|v_n\|_{H^1(\Tt^d)}=1$.

\begin{claim}
The range of $A$, $ R(A)$, is closed and $R(A)=H^1(\Tt^d)$.
\end{claim}
Take a sequence $\{z_n\}_{n\in \Nn} \subset R(A)$ that converges to $z \in E$. To prove the first part of the claim, we begin by showing that 
$z \in R(A)$. For that, take $w_n \in H^1(\Tt^d)$ satisfying $z_n=Aw_n$. From the preceding claim, 
it follows that $\{w_n\}_{n\in \Nn}$ is a Cauchy sequence converging to some
$w\in H^1(\Tt^d)$. By the continuity of $A$, we have $z=Aw$. Thus, $z \in R(A)$.

Next, to establish the last part of the claim, 
suppose that $R(A) \neq H^1(\Tt^d)$. In this case, there exists a non-zero vector,  $v
\in R(A)^\perp$. Then, we get
\begin{align*}
0&=(Av,v)=B_\lambda[v,v]=\int_{\Tt^d} m_\lambda |Dv|^2+(g^{-1})'(g(m_\lambda))\{
\epsi v +Du_\lambda \cdot Dv\}^2\, \dx.
\end{align*}
This contradicts $v \neq 0$.

\begin{claim}
\eqref{LP} has a unique solution $\phi \in C^{2,\alpha}(\Tt^d)$.
\end{claim}
To prove this last claim, we define a bounded linear functional, $T:H^1(\Tt^d) \to \Rr,$ by 
\begin{equation*}
T(w)=\int_{\Tt^d} \xi w \, \dx.
\end{equation*}
The Riesz Representation Theorem guarantees that there exists a unique
$\hat{w} \in H^1(\Tt^d)$ satisfying $T(w)=(\hat{w},w) $ for all $w \in H^1(\Tt^d)$.
Taking $\phi=A^{-1}\hat{w}$, we get
\begin{equation*}
B_\lambda[\phi,w]=(A\phi,w)_E=(\hat{w},w)_E=T(w);
\end{equation*}
that is, $\phi \in H^1(\Tt^d)$ is the unique weak solution of \eqref{LP}.
Because \eqref{LP} is a uniformly elliptic equation and the coefficients belong
to $C^{0,\alpha}(\Tt^d)$, we conclude that $\phi \in C^{2,\alpha}(\Tt^d)$. 
\end{proof}

To finish the proof of Theorem \ref{result1} we verify that $\Lambda$ is relatively open. 
This is achieved in the next proposition. 

\begin{pro}
Consider the setting of Problem \ref{P1} and suppose that Assumptions \ref{osc} and \ref{beta} hold.
Let $\Lambda$ as in \eqref{lamb}. Then, $\Lambda$ is relatively open in
$[0,1]$. \end{pro}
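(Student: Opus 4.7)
The plan is to apply the implicit function theorem in Banach spaces to the operator $J$ at each point $(u_{\lambda_0}, \lambda_0)$ with $\lambda_0 \in \Lambda$.

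First, I would verify that $J: C^{2,\alpha}(\Tt^d) \times [0,1] \to C^{0,\alpha}(\Tt^d)$ is continuously Fr\'echet differentiable. Because $g\in C^{1,\alpha}$ and $g$ is strictly increasing, $g^{-1}$ and $(g^{-1})'$ are also $C^{1,\alpha}$ on the relevant range; moreover, by Proposition \ref{lbd} the argument $\epsi u_\lambda + \tfrac12|Du_\lambda|^2 + \lambda V$ stays in a compact subset of the domain of $g^{-1}$ where $g$ is bounded away from $0$ and $g'$ is bounded below. Thus $J$ is a composition of smooth Nemytskii operators with the differential operators $u\mapsto Du$ and $u\mapsto \Delta u$, and hence is $C^1$ in a neighborhood of $(u_{\lambda_0},\lambda_0)$, with partial derivative $\partial_u J(u_{\lambda_0},\lambda_0) = L_{\lambda_0}$ as defined in \eqref{ll}.

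Next, I would invoke Lemma \ref{isolem}, which guarantees that $L_{\lambda_0}: C^{2,\alpha}(\Tt^d) \to C^{0,\alpha}(\Tt^d)$ is an isomorphism. The implicit function theorem then yields $\delta>0$ and a $C^1$ map $\lambda \mapsto u_\lambda$ from $(\lambda_0-\delta,\lambda_0+\delta) \cap [0,1]$ to $C^{2,\alpha}(\Tt^d)$ with $J(u_\lambda,\lambda)=0$. This shows that the whole neighborhood $(\lambda_0-\delta,\lambda_0+\delta)\cap [0,1]$ is contained in $\Lambda$, proving relative openness.

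The main obstacle is to ensure that $J$ is genuinely $C^1$ as a map between the stated H\"older spaces, which requires verifying that the nonlinear composition $u \mapsto g^{-1}\bigl(\epsi u + \tfrac12|Du|^2 + \lambda V\bigr)$ and its derivative act continuously from $C^{2,\alpha}$ into $C^{0,\alpha}$ and $C^{1,\alpha}$, respectively. This follows from the uniform bounds $\|\epsi u\|_\infty + \|Du\|_\infty \leq C$ of Propositions \ref{lbd}--\ref{ubd}, which confine the argument of $g^{-1}$ to a compact interval on which $g^{-1}$ is $C^{1,\alpha}$ by Assumption \ref{beta}; combined with the algebra property of $C^{0,\alpha}(\Tt^d)$ and standard Nemytskii estimates, the required differentiability is obtained. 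Once $C^1$ regularity of $J$ is secured, the implicit function theorem completes the argument.
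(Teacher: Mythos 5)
Your proof takes essentially the same route as the paper: invoke Lemma \ref{isolem} for the invertibility of the linearization $L_{\lambda_0}=\partial_u J(u_{\lambda_0},\lambda_0)$ and apply the implicit function theorem in Banach spaces to $J$ to obtain solutions $u_{\hat\lambda}$ for all $\hat\lambda$ in a relative neighborhood of $\lambda_0$ in $[0,1]$. If anything, you give more detail than the paper, which simply cites the implicit function theorem; your additional verification that $J$ is $C^1$ (via Nemytskii estimates, using that the a priori bounds confine the argument of $g^{-1}$ to a compact interval where $g'$ is bounded below) addresses a point the paper leaves implicit.
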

\begin{proof}
 By the preceding lemma, Lemma \ref{isolem}, we can apply the implicit function theorem (see \cite{Die}) to the operator $J$ to conclude that  $\Lambda$ is open. Therefore, for any $\lambda \in \Lambda$, there exists $\delta>0$ such that for any $\hat{\lambda} \in ({\lambda}-\delta,{\lambda}+\delta)$, we can find $\hat{u} \in C^{2,\alpha}(\Tt^d)$ such that $J(\hat u,\hat \lambda)=0$.
\end{proof}

By combining the previous results, we prove Theorem \ref{result1} as follows. 

\begin{proof}[Proof of Theorem \ref{result1}]
Since $1\in \Lambda$, there exists a classical solution $u^\epsi$ for \eqref{EL}. 
 Take $m^\epsi=f^{-1}(\epsi u^\epsi+\frac{1}{2}|Du^\epsi|^2+V)$. Then,  $(u^\epsi,m^\epsi)$ solves \eqref{DP}. 
 The identity \eqref{identity} gives that \eqref{DP} has a unique classical solution.
\end{proof}
 
 Finally, we show that
 under Assumptions \ref{osc} and \ref{beta}, we have the convergence of the solutions of \eqref{DP}. 
%
 
 \begin{proof}[Proof of Corollary \ref{cor1}]
The estimates in Section \ref{esti} and \ref{esti2} do not depend on $\epsi$.
Therefore, we can extract a subsequence $\epsi_j$ such that 
$\epsi_j u^{\epsi_j}$ converges uniformly to a constant $-\bar{H}$ and 
{$(u^{\epsi_j}-\int_{\Tt^d}u^{\epsi_j} dx ,m^{\epsi_j})$} converges to some $(u,m)$ in $C^{2, \alpha}\times C^{1, \alpha}$. 
Therefore,  $(u,m,\bar{H})$ solves \eqref{CP}. 
By the results in Section \ref{LU}, $m$ and $\bar{H}$ are uniquely determined.
Accordingly, the limit of $\epsi_j u^{\epsi_j}$  and $m^{\epsi_j}$ does not depend on
the subsequence. 
Because of Proposition \ref{lbd}, we have that $m>0$. Thus, there exists a unique solution, $(u,m,\bar{H})$ of \eqref{CP} satisfying the additional condition
{$\int_{\Tt^d} u dx=0$}. 
%
%
%
%
\end{proof}


\section{Refined asymptotics}
\label{RA}

Now, we investigate the asymptotic behavior of $\{u^\epsi - {\Hh}/\epsi\}_{\epsi>0}$ and prove
Theorem \ref{TT}, thus improving the converge results in 
Corollary \ref{cor1}.
%


First, to address Problem \ref{P4}, we consider the linearized discounted problem that we state now. 
\begin{problem}
	\label{P5}
Let $g$ be as in Problem \ref{P1} with $g\in C^\infty$ and let $(u,m)\in C^\infty (\Tt^d)\times C^\infty(\Tt^d)$ solve Problem \ref{P2} with $m>0$. Suppose that $\epsi>0$. Given $A,B \in C^\infty(\Tt^d)$, find $v^\epsi, \theta^\epsi: \Tt^d \to \Rr$ such that
\begin{equation}\label{LDP}
\begin{cases}
&\epsi v^\epsi +u+Du\cdot Dv^\epsi=g'(m)\theta^\epsi +A\quad\rm{in}\  \Tt^d, \\
& \epsi \theta^\epsi-\mathrm{div}(m Dv^\epsi)-\mathrm{div}(\theta^\epsi Du) = 1-m +\mathrm{div}(B)\quad \rm{in} \ \Tt^d. \\
\end{cases}
\end{equation}
\end{problem}


\begin{proposition}	
Suppose Assumption \ref{beta} holds. 
Then,
Problem \ref{P5} has a unique weak solution $(v^\epsi,\theta^\epsi)\in H^1(\Tt^d)\times L^2(\Tt^d)$.	
\end{proposition}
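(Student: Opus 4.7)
The plan is to reduce the system \eqref{LDP} to a single scalar elliptic equation for $v^\epsi$ and then solve it via a Lax-Milgram argument, in the spirit of Lemma~\ref{isolem}. Since $(u,m)\in C^\infty(\Tt^d)\times C^\infty(\Tt^d)$ with $m>0$, one has $\|Du\|_\infty<\infty$ and pointwise bounds $0<m_0\leq m$ and $0<c_1\leq g'(m)\leq c_2$. Using $g'(m)>0$, the first equation in \eqref{LDP} can be solved algebraically for $\theta^\epsi$ as
\[
\theta^\epsi=\frac{1}{g'(m)}\bigl(\epsi v^\epsi+u+Du\cdot Dv^\epsi-A\bigr),
\]
which automatically places $\theta^\epsi\in L^2(\Tt^d)$ whenever $v^\epsi\in H^1(\Tt^d)$.

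Substituting this expression into the second equation and testing against $w\in H^1(\Tt^d)$, I would reduce \eqref{LDP} to the weak problem of finding $v\in H^1(\Tt^d)$ such that $B_\epsi[v,w]=T(w)$ for every $w\in H^1(\Tt^d)$, where
\[
B_\epsi[v,w]:=\int_{\Tt^d}\frac{(\epsi v+Du\cdot Dv)(\epsi w+Du\cdot Dw)}{g'(m)}+m\,Dv\cdot Dw\,\dx
\]
and $T$ is a bounded linear functional on $H^1(\Tt^d)$ that collects the inhomogeneities $A$, $B$ and the known term $u$. Both $B_\epsi$ and $T$ are continuous on $H^1(\Tt^d)$ thanks to the pointwise bounds above.

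The hard part will be the $H^1$-coercivity of $B_\epsi$. Inserting $w=v$ yields
\[
B_\epsi[v,v]=\int_{\Tt^d}\frac{(\epsi v+Du\cdot Dv)^2}{g'(m)}+m|Dv|^2\,\dx\geq 0,
\]
which is manifestly non-negative but not straightforwardly bounded below by $\|v\|_{H^1}^2$. I would argue by contradiction, in the style of Lemma~\ref{isolem}: if there were $v_n\in H^1(\Tt^d)$ with $\|v_n\|_{H^1}=1$ and $B_\epsi[v_n,v_n]\to 0$, then the lower bound $m\geq m_0>0$ would force $Dv_n\to 0$ in $L^2(\Tt^d)$, while the first integrand would force $\epsi v_n+Du\cdot Dv_n\to 0$ in $L^2(\Tt^d)$. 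Since $Du$ is bounded and $\epsi>0$, combining these two facts gives $v_n\to 0$ in $L^2(\Tt^d)$, hence $v_n\to 0$ in $H^1(\Tt^d)$, contradicting $\|v_n\|_{H^1}=1$. This is precisely where the hypothesis $\epsi>0$ enters the argument: it kills the constant mode that otherwise lies in the kernel of the undiscounted linearization appearing in Problem~\ref{P4}. Lax-Milgram then produces a unique $v^\epsi\in H^1(\Tt^d)$; defining $\theta^\epsi\in L^2(\Tt^d)$ by the explicit formula above and unwinding the derivation shows that $(v^\epsi,\theta^\epsi)$ is the desired weak solution of \eqref{LDP}, with uniqueness inherited from Lax-Milgram.
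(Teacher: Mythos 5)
Your proposal follows the paper's proof essentially verbatim: the same elimination of $\theta^\epsi$ through the first equation of \eqref{LDP}, the same reduced scalar elliptic equation, the same bilinear form (your $B_\epsi$ is exactly the paper's $K$ written in symmetric form), and the same Lax--Milgram conclusion with $\theta^\epsi$ recovered a posteriori in $L^2$. The only difference is that you actually justify the $H^1$-coercivity of the form---using $m\geq m_0>0$, the boundedness of $Du$, and $\epsi>0$ to control $\|v\|_{L^2}$---a step the paper asserts without detail, and your argument for it is correct.
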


\begin{proof}
	Because $m>0$, Assumption \ref{beta} 
(or  the alternative assumption in Remark \ref{lbrem})	
	gives that $g'(m)$ is bounded by below.  
	From the first equation in \eqref{LDP}, we get 
	\begin{equation}
	\label{62B}
		 \theta^\epsi=\frac{\epsi v^\epsi+u+Du\cdot Dv^\epsi-A}{g'(m)}.
   \end{equation} 
	Using the previous expression for $\theta^\epsi$ in the second equation in \eqref{LDP}, we obtain
	\begin{align}\label{bbb}
	&-\mathrm{div}\left(mDv^\epsi+\frac{\epsi v^\epsi+Du\cdot Dv^\epsi}{g'(m)}Du \right)+\frac{\epsi(\epsi v^\epsi+Du\cdot Dv^\epsi)}{g'(m)}\\\notag &\qquad=1-m+\div(B)+\mathrm{div}\left(\frac{u-A}{g'(m)}Du\right)-\frac{\epsi(u-A)}{g'(m)}.
	\end{align}
	Therefore, it suffices to show that \eqref{bbb} has a weak solution. For that, we define a bilinear form,  $K:H^1(\Tt^d)\times H^1(\Tt^d)\to \Rr$, by
	\begin{equation*}
	K[\phi_1,\phi_2]=\int_{\Tt^d} mD\phi_1 \cdot D\phi_2+\frac{\epsi \phi_1+Du\cdot D\phi_1}{g'(m)}Du\cdot D\phi_2+\frac{\epsi \phi_2(\epsi \phi_1+Du\cdot D\phi_1)}{g'(m)} dx.
	\end{equation*}
	Because $m$ and $u$ are smooth with $g'(m)$ bounded by below, we see that  $K$ is a bounded bilinear form. Moreover, for all $\phi \in H^1(\Tt^d)$,
	\begin{equation*}
	K[\phi,\phi]=\int_{\Tt^d} m|D \phi|^2+\frac{(Du\cdot D\phi+\epsi \phi)^2}{g'(m)} dx.
	\end{equation*}
	Hence, $K$ is coercive. Thus, applying the Lax-Milgram theorem, we see that \eqref{bbb} has a unique weak solution, $v^\epsilon\in H^1(\Tt^d)$. Then, using \eqref{62B} and taking into account that $g'(m)$ is bounded by below, we obtain a weak solution $\theta^\epsi\in L^2(\Tt^d)$. 
\end{proof}

\begin{proposition}
Let
$(v^\epsi,\theta^\epsi)\in H^1(\Tt^d)\times L^2(\Tt^d)$ be a weak solution of Problem \ref{P5}. 
	Then, there exists a constant $C>0$ independent of $\epsi$ such that
	\[ \|\epsi v^\epsi\|_{L^2(\Tt^d)}+\| \theta^\epsi \|_{L^2(\Tt^d)}+\|Dv^\epsi \|_{L^2(\Tt^d)} \leq C(\|A\|_{L^2(\Tt^d)}+\|B\|_{L^2(\Tt^d)}+1).\]
\end{proposition}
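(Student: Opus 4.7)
The plan is to test the reformulated scalar equation \eqref{bbb} with $v^\epsi$, extract a uniform energy estimate from the coercivity of the bilinear form $K$ already established in the existence proof, and then recover $\theta^\epsi$ algebraically via \eqref{62B}.

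First I would note that the fixed background solution $(u,m)$ of Problem \ref{P2} satisfies $0<m_0\leq m\leq M_0$ and $\|u\|_{\infty}+\|Du\|_{\infty}\leq C$ by the estimates of Sections \ref{esti} and \ref{esti2}; consequently, $g'(m)$ is bounded above and below by positive constants. Testing \eqref{bbb} against $v^\epsi$ puts $K[v^\epsi,v^\epsi]$ on the left, and the pointwise bounds on $m$ and $g'(m)$ immediately yield
\[
K[v^\epsi,v^\epsi]\;\geq\; c_1\|Dv^\epsi\|_{L^2(\Tt^d)}^2+c_2\|\epsi v^\epsi+Du\cdot Dv^\epsi\|_{L^2(\Tt^d)}^2
\]
with $c_1,c_2>0$ independent of $\epsi$.

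On the right-hand side, integrating by parts the two divergence terms leaves
\[
\int_{\Tt^d}(1-m)v^\epsi\,\dx-\int_{\Tt^d}B\cdot Dv^\epsi\,\dx-\int_{\Tt^d}\frac{u-A}{g'(m)}Du\cdot Dv^\epsi\,\dx-\int_{\Tt^d}\frac{\epsi(u-A)}{g'(m)}v^\epsi\,\dx.
\]
The second and third integrals are controlled by $C(\|A\|_{L^2}+\|B\|_{L^2}+1)\|Dv^\epsi\|_{L^2}$ via Cauchy-Schwarz. For the first, I would invoke the identity $\int_{\Tt^d}(1-m)\,\dx=0$ from Proposition \ref{density} to replace $v^\epsi$ by $v^\epsi-\int_{\Tt^d}v^\epsi\,\dx$ and then apply the Poincar\'e inequality, producing a bound of the same kind. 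For the last integral, I would rewrite it as $\int\frac{u-A}{g'(m)}(\epsi v^\epsi)\,\dx$, bound it by $C(\|A\|_{L^2}+1)\|\epsi v^\epsi\|_{L^2}$, and then split $\|\epsi v^\epsi\|_{L^2}$ by the triangle inequality as $\|\epsi v^\epsi+Du\cdot Dv^\epsi\|_{L^2}+\|Du\|_{\infty}\|Dv^\epsi\|_{L^2}$, so that both pieces match quantities already present on the coercive side. A weighted Young's inequality with a small parameter then absorbs all $\|Dv^\epsi\|_{L^2}$ and $\|\epsi v^\epsi+Du\cdot Dv^\epsi\|_{L^2}$ factors into the left-hand side, delivering
\[
\|Dv^\epsi\|_{L^2}^2+\|\epsi v^\epsi+Du\cdot Dv^\epsi\|_{L^2}^2\;\leq\; C\bigl(\|A\|_{L^2}^2+\|B\|_{L^2}^2+1\bigr).
\]

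From this, the triangle inequality once more yields the desired bound on $\|\epsi v^\epsi\|_{L^2}$. Finally, the pointwise formula \eqref{62B} expresses $\theta^\epsi$ as $\bigl(\epsi v^\epsi+u+Du\cdot Dv^\epsi-A\bigr)/g'(m)$, so the lower bound on $g'(m)$ reduces $\|\theta^\epsi\|_{L^2}$ to the already-controlled quantities. The only real subtlety is the $\int(1-m)v^\epsi$ term, which a priori grows like $\|v^\epsi\|_{L^2}$ and is uncontrolled without a mean condition on $v^\epsi$; but $\int(1-m)\,\dx=0$ converts this into a gradient estimate, after which all the remaining pieces close up by standard absorption.
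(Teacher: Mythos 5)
Your proof is correct, and it closes for the same reasons as the paper's, but the route is organized differently. The paper never tests the reduced equation \eqref{bbb}: it pairs the system \eqref{LDP} with itself in the Lasry--Lions fashion---multiplying the first equation by $\theta^\epsi$, the second by $v^\epsi$, subtracting and integrating by parts---which yields the energy identity
\[
\int_{\Tt^d} g'(m)|\theta^\epsi|^2+m|Dv^\epsi|^2\,\dx=\int_{\Tt^d}(u-A)\theta^\epsi+(1-m)v^\epsi+v^\epsi\,\mathrm{div}(B)\,\dx,
\]
from which $\|\theta^\epsi\|_{L^2}$ is estimated directly (using only the lower bound on $g'(m)$), then $\|Dv^\epsi\|_{L^2}$, and only at the very end is the first equation of \eqref{LDP} used to read off $\|\epsi v^\epsi\|_{L^2}$. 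You instead eliminate $\theta^\epsi$ first through \eqref{62B}, test \eqref{bbb} with $v^\epsi$ to bring in the coercive form $K[v^\epsi,v^\epsi]=\int_{\Tt^d} m|Dv^\epsi|^2+(\epsi v^\epsi+Du\cdot Dv^\epsi)^2/g'(m)\,\dx$, and recover $\theta^\epsi$ algebraically afterwards; under the substitution \eqref{62B} the two quadratic forms agree up to lower-order terms in $u-A$, so the estimates are equivalent, and both proofs treat the one genuinely delicate term $\int_{\Tt^d}(1-m)v^\epsi\,\dx$ identically, via $\int_{\Tt^d}(1-m)\,\dx=0$ and Poincar\'e. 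What your version buys is a single absorption step that reuses the Lax--Milgram structure of the existence proof, avoiding the paper's two-stage chaining between the $\|Dv^\epsi\|$ and $\|\theta^\epsi\|$ bounds; the small price is that coercivity of the second term in $K$ requires $g'(m)$ bounded \emph{above} as well as below, which holds here because $m$ is smooth and positive on the compact torus and $g\in C^\infty$, but is an extra hypothesis-check the paper's route does not need at that stage. Two cosmetic slips worth fixing: the normalization $\int_{\Tt^d}m\,\dx=1$ for the background density is part of the statement of Problem \ref{P2} (Proposition \ref{density} concerns the discounted problem), and the $L^\infty$ bounds on $u$, $Du$, $m$, $1/m$ need no appeal to Sections \ref{esti} and \ref{esti2}---in Problem \ref{P5} the pair $(u,m)$ is assumed smooth with $m>0$, so these bounds are automatic on $\Tt^d$.
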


\begin{proof}
	We multiply the first equation in \eqref{LDP} by $\theta^\epsi$ and the second one by $v^\epsi$. Next, we subtract the resulting expressions to get	
	\[u\theta^\epsi+\theta^\epsi Du\cdot Dv^\epsi+v^\epsi \mathrm{div}(mDv^\epsi)+v^\epsi \mathrm{div}(\theta^\epsi Du)=g'(m)|\theta^\epsi|^2-(1-m)v^\epsi-v^\epsi \mathrm{div}(B)+A\theta^\epsi. \]
	Integrating by parts, we obtain
	\[ \int_{\Tt^d} g'(m)|\theta^\epsi|^2+m|Dv^\epsi|^2 \mbox{ }dx= \int_{\Tt^d} (u-A)\theta^\epsi +(1-m)v^\epsi+v^\epsi \mathrm{div}(B) \mbox{ }dx.\]	
Using Poincar\'e's inequality, we conclude that
	\begin{align*}
	\int_{\Tt^d} m|Dv^\epsi|^2 \mbox{ }dx
	&\leq  \int_{\Tt^d} (u-A)\theta^\epsi +(1-m)v^\epsi+v^\epsi \mathrm{div}(B) \mbox{ }dx \\
	&= \int_{\Tt^d} (u-A)\theta^\epsi +(1-m)\left(v^\epsi-\int v^\epsi \mbox{ }dx \right)+v^\epsi \mathrm{div}(B)\mbox{ }dx \\
	&\leq \|u\|_{L^2}\|\theta^\epsi \|_{L^2}+\|A\|_{L^2}\|\theta^\epsi \|_{L^2}+\|1-m\|_{L^2}\|Dv^\epsi\|_{L^2}+\|B\|_{L^2}\|Dv^\epsi\|_{L^2}.
	\end{align*}
	Hence, taking into account that $m$ is bounded by below, 
	\begin{equation}\label{Dv}
	\|Dv^\epsi\|^2_{L^2}\leq C(\|\theta^\epsi\|_{L^2}+\|A\|_{L^2}\|\theta^\epsi\|_{L^2}+\|B\|^2_{L^2}).
	\end{equation}
Arguing analogously, we obtain
	\begin{align*}
	\int_{\Tt^d} g'(m)|\theta^\epsi|^2 \mbox{ }dx
	\leq \|u\|_{L^2}\|\theta^\epsi \|_{L^2}+\|A\|_{L^2}\|\theta^\epsi \|_{L^2}+\|1-m\|_{L^2}\|Dv^\epsi\|_{L^2}+\|B\|_{L^2}\|Dv^\epsi\|_{L^2}.
	\end{align*}
	Hence, by \eqref{Dv},
	\[ \|\theta^\epsi\|^2_{L^2}\leq C(\|A\|^2_{L^2}+\|B\|^2_{L^2}+1).\]
	Combining the preceding inequality with \eqref{Dv},  we have the estimate
	\[ \|Dv^\epsi\|^2_{L^2}\leq C(\|A\|^2_{L^2}+\|B\|^2_{L^2}+1).\]
	Finally, the first equation in \eqref{LDP} yields
	\[ \|\epsi v^\epsi \|^2_{L^2}\leq C(\|A\|^2_{L^2}+\|B\|^2_{L^2}+1).
	\qedhere
	\]
\end{proof}

Next, we bootstrap higher regularity for $(v^\epsi, \theta^\epsi)$. 

\begin{proposition}
Let
$(v^\epsi,\theta^\epsi)$ be a weak solution of Problem \ref{P5}. 	Fix $h\in \{1,2,,...,d\}$ and let $z=v^\epsi_{x_h}$. Then, for each $k\in \Nn$, there exists a constant $C_k>0$ such that 
\[ \|z\|_{H^k(\Tt^d)} \leq C_k(1+\|A\|_{H^k(\Tt^d)}+\|B\|_{H^k(\Tt^d)}).\] 
\end{proposition}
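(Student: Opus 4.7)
The plan is to prove, by induction on $k$, the stronger statement that for every multi-index $\alpha$ with $1\leq|\alpha|\leq k+1$,
\[
\|\partial^\alpha v^\epsi\|_{L^2(\Tt^d)}\leq \tilde C_k\bigl(1+\|A\|_{H^k(\Tt^d)}+\|B\|_{H^k(\Tt^d)}\bigr),
\]
which yields the proposition by specializing to $\alpha=\beta+e_h$ with $|\beta|\leq k$. The argument rests on the uniformly elliptic divergence structure of \eqref{bbb}.

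First, I would rewrite \eqref{bbb} as
\[
-\partial_j\bigl(a^{ij}v^\epsi_{x_i}\bigr)+\epsi\,b^j v^\epsi_{x_j}+\epsi^2 c\,v^\epsi=\partial_j G^j+f,
\]
where $a^{ij}=m\delta_{ij}+u_{x_i}u_{x_j}/g'(m)$, the scalars $b^j,c$ are smooth functions of $u$ and $m$, and $G^j,f$ depend linearly on $A$, $DA$, $B$, and $\epsi v^\epsi$ with smooth coefficients. Because $u,m\in C^\infty(\Tt^d)$ with $m>0$ and $g'(m)\geq\gamma>0$, all coefficients lie in $C^\infty(\Tt^d)$ with bounds independent of $\epsi$, and $(a^{ij})$ is uniformly elliptic with an $\epsi$-independent ellipticity constant.

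The base case $k=0$ is immediate from the previous proposition. For the inductive step, I fix $\beta$ with $|\beta|=k+1$ and apply $\partial^\beta$ to the rewritten PDE. By Leibniz's rule, this yields a uniformly elliptic divergence-form equation
\[
-\partial_j\bigl(a^{ij}w_{x_i}\bigr)+\epsi\,b^j w_{x_j}+\epsi^2 c\,w=\partial_j\tilde G^j+\tilde f
\]
for $w:=\partial^\beta v^\epsi$, whose right-hand side collects (i) derivatives of $A$ and $B$ of order at most $k+1$; (ii) commutator terms in which derivatives of the smooth coefficients act on derivatives of $v^\epsi$ of order at most $k+1$, controlled by the inductive hypothesis; and (iii) zero-order remainders of the form $\epsi^2(\partial^\beta c)v^\epsi$ and $\epsi(\partial^\beta b^j)v^\epsi_{x_j}$. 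Multiplying the equation by $w$, integrating by parts over $\Tt^d$, and using the uniform ellipticity together with Poincar\'e's inequality applied to $w$ (which has zero mean since $|\beta|\geq 1$ and $v^\epsi$ is periodic) produces
\[
\|D w\|_{L^2(\Tt^d)}^2\leq C\bigl(\|\tilde G\|_{L^2}^2+\|\tilde f\|_{H^{-1}}^2\bigr).
\]
This controls $\|\partial^\alpha v^\epsi\|_{L^2}$ for every $\alpha$ with $|\alpha|=k+2$ and $\alpha\geq \beta$ componentwise, which closes the induction.

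The principal obstacle is maintaining uniformity in $\epsi$: since $\|v^\epsi\|_{L^2}$ is \emph{not} uniformly bounded as $\epsi\to 0$, the residual zero-order remainder $\epsi^2(\partial^\beta c)v^\epsi$ cannot be controlled via $\|v^\epsi\|_{L^2}$ directly. The resolution is that every such term carries at least one factor of $\epsi$ and may be rewritten as $\epsi\cdot(\epsi v^\epsi)\cdot(\text{smooth})$, whose $L^2$ norm is bounded by $\epsi\,\|\epsi v^\epsi\|_{L^2}\leq C\epsi$ thanks to the previous proposition. This observation is precisely what allows the induction to close with constants $C_k$ independent of $\epsi$.
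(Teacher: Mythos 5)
Your proof is correct and follows essentially the same route as the paper: differentiate the uniformly elliptic divergence-form equation \eqref{bbb}, run an energy estimate on the resulting equation for the derivative, bootstrap to higher order, and control the zero-order remainders in $v^\epsi$ by grouping them as $\epsi\cdot(\epsi v^\epsi)\cdot(\text{smooth})$ so that the previous proposition's bound $\|\epsi v^\epsi\|_{L^2}\leq C$ applies (the paper does this implicitly by keeping the $\epsi v^\epsi$ terms inside its source term $\phi$). The only difference is organizational: you run a formal induction over multi-indices with explicit commutator bookkeeping, whereas the paper differentiates once in $x_h$ to get \eqref{kkk} and then iterates the same energy estimate.
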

\begin{proof}
We begin by rewriting \eqref{bbb} as
\begin{align*}
-\div\left(mDv^\epsi+\frac{Du\cdot Dv^\epsi}{g'(m)}Du \right)=&-\div\left(\frac{\epsi v^\epsi}{g'(m)}Du\right)-\frac{\epsi(\epsi v^\epsi+Du\cdot Dv^\epsi)}{g'(m)}+1-m\\&+\div(B)+\div\left(\frac{u-A}{g'(m)}Du\right)-\frac{\epsi(u-A)}{g'(m)}.
\end{align*}
Next, we fix $h \in \{1,2,...,d\}$ and let $z=v^\epsi_{x_h}$. Differentiating the preceding equation with respect to $x_h$, we obtain 
\begin{equation}\label{kkk}
(a^{ij}z_{x_j})_{x_i}=\phi_{x_h}+\psi^i_{x_i},
\end{equation}
where
\begin{equation*}
a^{ij}=\delta_{ij}m+\frac{u_{x_i}u_{x_j}}{g'(m)},
\end{equation*}
\begin{equation*}
\psi^i=-m_{x_h}v_{x_i} -\frac{Du\cdot Dv^\epsi}{g'(m)}  u_{x_i x_h}+\frac{Du\cdot Dv^\epsi}{g'(m)^2} g''(m)u_{x_i}m_{x_h}+\frac{Dv^\epsi \cdot Du_{x_h}}{g'(m)}u_{x_i} ,
\end{equation*}
and
\begin{align*}
\phi=&-\div\left(\frac{\epsi v^\epsi}{g'(m)}Du\right)-\frac{\epsi(\epsi v^\epsi+Du\cdot Dv^\epsi)}{g'(m)}+1-m\\&+\div(B)+\div\left(\frac{u-A}{g'(m)}Du\right)-\frac{\epsi(u-A)}{g'(m)}.
\end{align*}
By the previous proposition, we know that
\[ \|z\|_{L^2(\Tt^d)}\leq C(\|A\|_{L^2(\Tt^d)}+\|B\|_{L^2(\Tt^d)}+1).\]
Furthermore,
we have the estimates
\[\| \phi\|_{L^{2}(\Tt^d)}\leq C(\|A\|_{H^1(\Tt^d)}+\|B\|_{H^1(\Tt^d)}+1),\]
and
\[\|\psi\|_{L^2(\Tt^d)}\leq C(\|A\|_{L^2}+\|B\|_{L^2}+1).\]
	
Let $k\geq 0$. Multiplying \eqref{kkk} by $z$ and integrating by parts, we get
	\begin{equation*}
	\int_{\Tt^d} a^{ij}z_{x_j}z_{x_i} dx=\int_{\Tt^d} -\phi z_{x_h}+ -h z_{x_i}dx.
	\end{equation*}
	Because of the uniform ellipticity of $a^{ij}$, we get
	\[ \int_{\Tt^d} |Dz|^2 dx \leq C (\|\phi\|_{L^2}+\|h\|_{L^2})\|Dz\|_{L^2}. \]
	Hence,
	\[ \|Dz\|_{L^2(\Tt^d)} \leq C(\|\phi \|_{L^2(\Tt^d)}+\|h\|_{L^2(\Tt^d)})\leq C(1+\|A\|_{H^1(\Tt^d)}+\|B\|_{H^1(\Tt^d)}).\]
	Therefore,
	\[\| \phi\|_{H^{1}(\Tt^d)}\leq C(\|A\|_{H^2(\Tt^d)}+\|B\|_{H^2(\Tt^d)}+1),\]
	and
	\[\|\psi\|_{H^1(\Tt^d)}\leq C(\|A\|_{H^1}+\|B\|_{H^1}+1).\]
The conclusion follows by iterating this argument for higher derivatives. 
\end{proof}

\begin{proposition}
Let
$(v^\epsi,\theta^\epsi)$ be a weak solution of Problem \ref{P5}. 	
	Then, for each $k\in \Nn$, there exists a constant $C_k>0$ such that 
	\[ \|\epsi v^\epsi\|_{H^k(\Tt^d)}+\|\theta^\epsi \|_{H^k(\Tt^d)} \leq C_k(1+\|A\|_{H^k(\Tt^d)}+\|B\|_{H^k(\Tt^d)}).\]
	In particular, $(v^\epsi,\theta^\epsi)$ is a classical solution of \eqref{LDP}. 
\end{proposition}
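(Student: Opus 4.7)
The plan is to combine the $H^k$ bounds on $Dv^\epsi$ from the previous proposition with the $L^2$ bound on $\epsi v^\epsi$ from the earlier one. Specifically, applying the preceding proposition to $z=v^\epsi_{x_h}$ for each coordinate direction $h\in\{1,\dots,d\}$ and summing over $h$, we obtain
\[
\|Dv^\epsi\|_{H^k(\Tt^d)}\leq C_k\bigl(1+\|A\|_{H^k(\Tt^d)}+\|B\|_{H^k(\Tt^d)}\bigr)
\]
with a constant independent of $\epsi$. This handles all derivatives of $v^\epsi$ of order at least one.

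To pass from derivatives of $v^\epsi$ to $\|\epsi v^\epsi\|_{H^k}$, I would split this norm into its $L^2$ piece and the higher-order seminorms. The $L^2$ piece is bounded directly by the $L^2$-level proposition. For a multi-index $\alpha$ with $|\alpha|\geq 1$, we write $D^\alpha(\epsi v^\epsi)=\epsi\,D^{\alpha-e_h}(v^\epsi_{x_h})$ for some coordinate $h$, so
\[
\|D^\alpha(\epsi v^\epsi)\|_{L^2}\leq \epsi\,\|v^\epsi_{x_h}\|_{H^{|\alpha|-1}}\leq \epsi_0\, C_k\bigl(1+\|A\|_{H^k}+\|B\|_{H^k}\bigr),
\]
where we restrict $\epsi\in(0,\epsi_0]$ to a bounded interval. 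The structural observation is that every higher-order seminorm of $\epsi v^\epsi$ carries an extra factor of $\epsi$, which absorbs the $\epsi$-dependence and yields a uniform estimate. Summing over $|\alpha|\leq k$ delivers the bound on $\|\epsi v^\epsi\|_{H^k(\Tt^d)}$.

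For $\theta^\epsi$ I would invoke the algebraic identity \eqref{62B}, namely
\[
\theta^\epsi=\frac{\epsi v^\epsi+u+Du\cdot Dv^\epsi-A}{g'(m)}.
\]
Since $u$ and $m$ lie in $C^\infty(\Tt^d)$ with $g'(m)$ bounded below (thanks to $m>0$ together with Assumption~\ref{beta} or the alternative in Remark~\ref{lbrem}), the functions $Du$ and $1/g'(m)$ act as multipliers on $H^k(\Tt^d)$ via the Leibniz rule, so the already established bounds on $\|\epsi v^\epsi\|_{H^k}$ and $\|Dv^\epsi\|_{H^k}$ propagate to the claimed bound on $\|\theta^\epsi\|_{H^k}$. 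For the concluding classical-solution assertion, I would take $k$ large enough (for instance $k>d/2+2$) and appeal to the Sobolev embedding $H^k(\Tt^d)\hookrightarrow C^2(\Tt^d)$, which places $v^\epsi$ in $C^2$ and $\theta^\epsi$ in $C^1$; both equations of \eqref{LDP} then hold pointwise.

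The only real obstacle is the bookkeeping required to keep the constants uniform in $\epsi$. Conceptually, this uniformity is already encoded in the earlier $L^2$ estimate and in the factor of $\epsi$ appearing in front of every derivative of $\epsi v^\epsi$; the rest of the argument is purely algebraic manipulation plus Sobolev embedding.
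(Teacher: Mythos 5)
Your proof is correct and rests on the same ingredients as the paper's: the $H^k$ bounds on $Dv^\epsi$ from the preceding proposition together with the first equation of \eqref{LDP} (your identity \eqref{62B}), the only difference being organizational --- the paper iterates order by order (differentiating the first equation to bound $D\theta^\epsi$ in $L^2$, then reading off $\|\epsi v^\epsi\|_{H^1}$, and repeating), whereas you argue in one shot, bounding $\|\epsi v^\epsi\|_{H^k}$ directly via the extra factor of $\epsi$ carried by each derivative and then recovering $\theta^\epsi$ from \eqref{62B} by smooth-multiplier estimates (valid since $u,m\in C^\infty$ and $g'(m)$ is bounded below). Your explicit Sobolev-embedding step for the classical-solution assertion ($Dv^\epsi\in H^k$ and $v^\epsi\in L^2$ give $v^\epsi\in H^{k+1}\hookrightarrow C^2$, and $\theta^\epsi\in H^k\hookrightarrow C^1$ for $k$ large) fills in a detail the paper's terse proof leaves implicit.
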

\begin{proof}
Differentiating the first equation in \eqref{LDP}, we get
	\[\|D\theta\|_{L^2(\Tt^d)}\leq C(1+\|A\|_{H^1(\Tt^d)}+\|B\|_{H^1(\Tt^d)}).\]
	The above implies
	\[ \|\epsi v^\epsi \|_{H^1(\Tt^d)}\leq  C(1+\|A\|_{H^1(\Tt^d)}+\|B\|_{H^1(\Tt^d)}).\]
	Iterating the preceding steps, we get the result.
\end{proof}

\begin{proposition}\label{lambda}
For $\epsilon>0$, let
$(v^\epsi,\theta^\epsi)$
be a weak solution of 
 Problem \ref{P5} and assume that
	\[ \|A\|_{L^\infty}+\|B\|_{L^\infty}\leq C\epsi  \]
	for some constant $C>0$. Then, there exists a solution $(v,\theta,\lambda)$ of Problem \ref{P4} and, 
	for each $k\in \Nn$,
	\[\lim_{\epsi \to0}\left( \|\epsi v^\epsi-\lambda \|_{L^\infty(\Tt^d)}+\|\theta^\epsi -\theta\|_{H^k(\Tt^d)}+\left\|\left(v^\epsi-\int_{\Tt^d} v^\epsi \mbox{ }dx \right)-v \right\|_{H^k(\Tt^d)}\right)=0.\]
\end{proposition}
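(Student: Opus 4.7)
My plan is to prove Proposition \ref{lambda} by compactness, limit passage, and uniqueness. The prior propositions in Section \ref{RA} supply the uniform $H^k$ estimates that will make the limit possible.

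First, I would observe that integrating the second equation in \eqref{LDP} over $\Tt^d$ yields $\epsi \int_{\Tt^d} \theta^\epsi \dx = 0$ (all divergences vanish on the torus, and $\int m\dx = 1$), hence $\int_{\Tt^d} \theta^\epsi \dx = 0$. Combining the previous propositions (which give $\|\epsi v^\epsi\|_{H^k} + \|\theta^\epsi\|_{H^k} + \|D v^\epsi\|_{H^k} \leq C_k$ uniformly in $\epsi$, under the smallness assumption on $A,B$ together with the smoothness built into Problem \ref{P5}) with Poincar\'e's inequality, I obtain uniform $H^{k+1}$ bounds on $\langle v^\epsi\rangle := v^\epsi - \int_{\Tt^d} v^\epsi \dx$, while $\|\epsi v^\epsi\|_{L^1} \leq C$ shows that $\{\epsi \int_{\Tt^d} v^\epsi \dx\}$ is a bounded sequence in $\Rr$.

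Second, by Rellich--Kondrachov compactness and a diagonal extraction, I choose a subsequence $\epsi_j \to 0$ along which $\langle v^{\epsi_j}\rangle \to v$ and $\theta^{\epsi_j} \to \theta$ strongly in $H^k$ for every $k$, and $\epsi_j \int v^{\epsi_j} \dx \to \lambda$ for some $\lambda \in \Rr$. Writing $\epsi v^\epsi = \epsi \langle v^\epsi\rangle + \epsi \int v^\epsi\dx$ and noting that the first summand vanishes in $L^\infty$, I conclude $\epsi_j v^{\epsi_j} \to \lambda$ uniformly. Passing to the limit in the two equations of \eqref{LDP}, using $\|A\|_{L^\infty} + \|B\|_{L^\infty} \leq C\epsi$ (so $A \to 0$ uniformly and $\div B \to 0$ distributionally) and the strong $H^k$ convergence of the remaining terms, yields that $(v,\theta,\lambda)$ solves Problem \ref{P4}, with $\int v\dx = 0$ and $\int \theta\dx = 0$ inherited from the approximations.

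Third, and most delicately, I would prove uniqueness of this normalized limit solution in order to upgrade subsequential convergence to full convergence. If $(v_1,\theta_1,\lambda_1)$ and $(v_2,\theta_2,\lambda_2)$ are two such solutions, then $(\tilde v,\tilde\theta,\tilde\lambda) := (v_1-v_2,\theta_1-\theta_2,\lambda_1-\lambda_2)$ satisfies the homogeneous version of \eqref{LLP}. Multiplying the first homogeneous equation by $\tilde\theta$, the second by $\tilde v$, subtracting, and integrating by parts (using periodicity and $\div(mDu)=0$), I obtain
\[
\int_{\Tt^d} g'(m)\tilde\theta^{\,2} \dx + \int_{\Tt^d} m |D\tilde v|^2 \dx = \tilde\lambda \int_{\Tt^d} \tilde\theta \dx = 0,
\]
since $\int \tilde\theta \dx = 0$. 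Because $m>0$ and $g'(m)>0$, this forces $\tilde\theta = 0$ and $\tilde v$ constant, hence $\tilde v = 0$ by normalization; the first equation then gives $\tilde\lambda = 0$.

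The principal obstacle is this final uniqueness step: it relies crucially on the conservation identity $\int \theta^\epsi\dx = 0$ surviving in the limit, together with the strict positivity of $m$ and $g'(m)$. A secondary technical point is extracting enough $H^k$ regularity on $A, B$ from the hypothesis, which is only $L^\infty$ in nature, but this is harmless given the smoothness standing assumption in Problem \ref{P5}.
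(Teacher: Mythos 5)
Your proof is correct and follows the same skeleton as the paper's one-paragraph argument: uniform bounds from the preceding propositions, subsequential compactness, passage to the limit in \eqref{LDP}, and uniqueness of the limit to promote subsequential to full convergence. The genuine difference is the uniqueness step. The paper simply asserts ``the solution to \eqref{LLP} is unique,'' whereas you prove it, and your proof exposes something the paper glosses over: as literally stated, Problem \ref{P4} does \emph{not} have a unique solution. For instance, if $V$ is constant, so that $u=0$, $m\equiv 1$, $Du=0$, then $(v,\theta,\lambda)=(c,\lambda/g'(1),\lambda)$ solves \eqref{LLP} for every $c,\lambda\in\Rr$; uniqueness only holds in the normalized class $\int_{\Tt^d} v\,\dx=\int_{\Tt^d}\theta\,\dx=0$. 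Your observation that integrating the second equation of \eqref{LDP} yields $\epsi\int_{\Tt^d}\theta^\epsi\,\dx=0$ is precisely what places every subsequential limit in that class, and your energy identity
\begin{equation*}
\int_{\Tt^d} g'(m)\,\tilde\theta^{\,2}+m\,|D\tilde v|^2\,\dx=\tilde\lambda\int_{\Tt^d}\tilde\theta\,\dx=0
\end{equation*}
(using $m>0$ and $g'(m)>0$, as in the Lasry--Lions argument of Section \ref{llm}) then closes the argument; so your write-up is, if anything, more complete than the paper's. Two small remarks. First, as you flag, the hypothesis $\|A\|_{L^\infty}+\|B\|_{L^\infty}\leq C\epsi$ does not by itself feed the preceding $H^k$ estimates; one needs $H^k$ control of $A,B$ uniformly in $\epsi$. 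The paper's statement has the same looseness, and in the only application (the fixed-point construction in the proof of Theorem \ref{TT}) one in fact has $\|A\|_{H^k}+\|B\|_{H^k}\leq C\epsi$, so this is harmless, but it deserves the explicit mention you give it. Second, trivia: the paper's proof writes $\epsi v^\epsi\to-\lambda$, a sign slip relative to the statement of the proposition; your convention $\epsi v^\epsi\to\lambda$ is the consistent one, and the appeal to $\div(mDu)=0$ in your integration by parts is not actually needed for the energy identity.
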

\begin{proof}
	By the previous estimates on the solutions of Problem \ref{P5}, we can choose a subsequence such that $\epsi v^\epsi \to -\lambda$, $\theta^\epsi \to \theta$ and $v^\epsi -\int v^\epsi \to v$. Clearly, $(v,\theta, \lambda)$ solves \eqref{LLP}. Because the solution to \eqref{LLP} is unique, the limit is independent
	of the particular sequence. Therefore,  $(v^\epsi,\theta^\epsi)$ converges to $(v, \theta)$.
\end{proof}


Finally, we present the proof of Theorem \ref{TT}. 

\begin{proof}[Proof of Theorem \ref{TT}]
	Fix $k\in \Nn$ and set
	\[E_k=\{(v,\theta)\in H^{k+1}(\Tt^d)\times H^{k}(\Tt^d), \|\epsi v\|_{H^k (\Tt^d)}+\|Dv\|_{H^{k}(\Tt^d)}+\|\theta \|_{H^{k}(\Tt^d)} \leq \hat{C_k}\},    \]
	where $\hat{C_k}$ is to be chosen later. For $(v,\theta)\in E_k$, 
	we find
	 $(\hat{v},\hat{\theta})$ solving \eqref{LDP}, where
	\[A(x)= \frac{-\epsi^2 v-\frac{\epsi^2}{2}|Dv|^2+g(m+\epsi \theta)-g(m)-\epsi g'(m)\theta}{\epsi} ,\]
	and
	\[B(x)=\frac{\epsi^2\theta Dv}{\epsi}   .\]
	Because,
	\[ \|A\|_{H^k(\Tt^d)}+\|B\|_{H^k(\Tt^d)} \leq C \hat{C_k}^2 \epsi, \]
	we obtain
	\begin{align*}
	\|\epsi v\|_{H^k (\Tt^d)}+\|Dv\|_{H^k(\Tt^d)}+\|\theta \|_{H^k(\Tt^d)}
	&\leq C_k(\|A\|_{H^k(\Tt^d)}+\|B\|_{H^k(\Tt^d)}+1)\\
	&\leq C_k(1+\hat{C_k}^2\epsi).
	\end{align*}
	We can choose $\hat{C_k}$ such that, for $\epsi$ small enough, the right-hand side is less than $\hat{C_k}$. Then, it holds that $(v,\theta) \to (\hat{v},\hat{\theta})$ has a fixed point $(v^\epsi, \theta^\epsi)$. We remark that $(\epsi v^\epsi+u+\frac{\bar{H}}{\epsi},m+\epsi \theta^\epsi)$ solves \eqref{DP} and therefore it is equal to $(u^\epsi,m^\epsi)$. Hence, by the previous proposition, for suitably large $k$, as $\epsi \to 0$,
	\[\|u^\epsi-\frac{\bar{H}}{\epsi}-u-\lambda\|_{\infty}=\|\epsi v^\epsi-\lambda\|_{\infty} \to 0.\qedhere\]
\end{proof}

\section{Convergence and selection}\label{selection}
Now, we investigate the behavior of $(u^\epsi,m^\epsi)$ as $\epsi \to 0$
in the case where Problem \ref{P2} may have multiple solutions; that is,  
when  Assumptions \ref{osc} and \ref{beta} do not hold.
We are interested in which of the solutions
of Problem \ref{P2} arise as a limit of solutions
of Problem \ref{P1}. 
Without Assumptions \ref{osc} and \ref{beta}, smooth solutions may not exist. 
Therefore, we need to work with weak solutions.
For Problem \ref{P1}, weak solutions were shown to exist in \cite{FG2}. 
In Section \ref{WSS}, we review those existence results 
and use them to show the existence of a solution for Problem \ref{P2}.  
Then, in Section \ref{SSMM},  we construct certain measures on phase space that generalize Mather measures.
Next, in Section \ref{SSS}, we prove our main selection result, Theorem \ref{result2}. We end the paper with a discussion of
an explicit example, in Section \ref{SSEA}.

\subsection{Regular weak solutions}
\label{WSS}

%
%

We begin this section by proving the following result on the stability of regular weak solutions. In particular, 
since the estimate of regular weak solutions of Problem \ref{P1} was proved in \cite{FG2}, we obtain the existence
of regular weak solutions for Problem \ref{P2}.  

\begin{pro}
\label{stabilitypro}
	Suppose that Assumption \ref{H3} holds. 
	Let $(m^\epsilon, u^\epsilon)$ be a regular weak solution of Problem \ref{P1}. 	
	Assume that 
	$m^\epsilon\rightharpoonup m$ weakly in $L^1(\Tt^d)$,  $u^\epsilon -\int u^\epsi dx \rightharpoonup u$ weakly in $W^{1,2}(\Tt^d)$,
	and that $\epsilon \int u^\epsilon \mbox{ }dx\to -\Hh$. Then, 
	$(m, u, \Hh)$ is a regular weak solution of Problem \ref{P2}.
\end{pro}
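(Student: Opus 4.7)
The plan is to extract subsequences, pass to the limit in the uniform estimates via weak lower semicontinuity, and then pass to the limit in the three distributional/almost-everywhere relations that define a regular weak solution. The delicate point is the complementarity identity \eqref{asdfghjk}, for which a direct distributional passage fails because of the nonlinear term $|Du^\epsilon|^2$; I would handle it through an integral identity combined with the subsolution inequality.

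First I would set up the compactness. The hypotheses and estimates 1--5 from Theorem \ref{ws} give uniform $W^{1,2}$-bounds on $(m^\epsilon)^{(\alpha+1)/2}$ and on $u^\epsilon - \int u^\epsilon\,dx$, together with a uniform BV-bound on $(m^\epsilon)^{(\alpha+1)/2}Du^\epsilon$. By Rellich--Kondrachov, $(m^\epsilon)^{(\alpha+1)/2} \to m^{(\alpha+1)/2}$ strongly in $L^2$, so $m^\epsilon \to m$ strongly in $L^{\alpha+1}$ and almost everywhere; continuity of $g$ with the growth bound $g(m)\leq \tilde c_2 m^\alpha + C$ then gives $g(m^\epsilon) \to g(m)$ in $L^{(\alpha+1)/\alpha}$. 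BV compactness produces strong $L^1$-convergence of $(m^\epsilon)^{(\alpha+1)/2}Du^\epsilon$, whose limit is identified as $m^{(\alpha+1)/2}Du$ by uniqueness of distributional limits. Also $\epsilon u^\epsilon \to -\bar H$ strongly in $L^2$, since its gradient tends to $0$ in $L^2$ and its mean converges by hypothesis. Estimates 1--3 and 5 for the limit then follow from weak lower semicontinuity of the $W^{1,2}$- and BV-norms, and $\int m\,dx = 1$ from weak $L^1$-convergence of $m^\epsilon$.

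Next I would pass to the limit in the two easier relations. For \eqref{subsol}, testing against nonnegative $\phi\in C^\infty(\Tt^d)$ and rewriting as $\int(-\epsilon u^\epsilon - V + g(m^\epsilon))\phi\,dx \geq \int|Du^\epsilon|^2\phi/2\,dx$, the left side converges to $\int(\bar H - V + g(m))\phi\,dx$ by the preceding step, while the right side is bounded from below by $\int|Du|^2\phi/2\,dx$ via weak lower semicontinuity of the $\phi$-weighted $L^2$-seminorm. This yields \eqref{ssole} distributionally, hence almost everywhere since its left-hand side lies in $L^1$. For \eqref{epweaksol}, testing against $\phi\in C^\infty(\Tt^d)$, the $\epsilon$-terms vanish and $\int m^\epsilon Du^\epsilon\cdot D\phi\,dx \to \int m Du\cdot D\phi\,dx$ by combining strong $L^p$-convergence of $m^\epsilon$ for a suitable $p$ (using Sobolev on $(m^\epsilon)^{(\alpha+1)/2}$ and the restriction on $\alpha$ in Assumption \ref{H3}) with the weak $L^2$-convergence of $Du^\epsilon$, yielding \eqref{lpfp}.

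The main obstacle is the pointwise complementarity. I would derive it from an integral identity. Testing \eqref{epweaksol} against $u^\epsilon$ and integrating by parts gives
\begin{equation*}
\int_{\Tt^d} m^\epsilon|Du^\epsilon|^2\,dx = \epsilon\int_{\Tt^d} u^\epsilon(1-m^\epsilon)\,dx \longrightarrow 0,
\end{equation*}
since $\int(1-m^\epsilon)\,dx=0$ allows us to replace $u^\epsilon$ by $u^\epsilon - \int u^\epsilon\,dx$, the two resulting factors being uniformly $L^2$-bounded and killed by $\epsilon$. Integrating \eqref{intweaksol} and using $\epsilon\int u^\epsilon m^\epsilon\,dx = \epsilon\int u^\epsilon\,dx - \int m^\epsilon|Du^\epsilon|^2\,dx \to -\bar H$ together with the earlier convergences produces the limit identity $\bar H - \int V m\,dx + \int g(m)m\,dx = 0$. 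Combining this with the almost-everywhere form of \eqref{ssole} multiplied by $m\geq 0$ gives
\begin{equation*}
0 \leq \int_{\Tt^d}\left(\bar H - \frac{|Du|^2}{2} - V + g(m)\right) m\,dx = -\int_{\Tt^d}\frac{m|Du|^2}{2}\,dx \leq 0.
\end{equation*}
Both inequalities force $m|Du|^2 = 0$ almost everywhere, and consequently $(\bar H - |Du|^2/2 - V + g(m))m = 0$ almost everywhere, which is \eqref{asdfghjk}. The hardest point is exactly here: $|Du^\epsilon|^2 m^\epsilon$ does not admit a direct weak limit, and it is the Fokker--Planck equation itself that forces $\int m^\epsilon|Du^\epsilon|^2 \to 0$ and makes the squeeze argument close.
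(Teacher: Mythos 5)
Your proposal is correct, and its compactness and stability stages coincide with the paper's proof almost step for step: Rellich--Kondrachov applied to $(m^\epsilon)^{(\alpha+1)/2}$ to get a.e.\ and strong convergence of $m^\epsilon$, BV compactness with identification of the limit flux $m^{(\alpha+1)/2}Du$ by pairing strong and weak $L^2$ limits, strong $L^2$ convergence of $m^\epsilon$ (valid precisely because of the range of $\alpha$ in Assumption \ref{H3}) combined with weak convergence of $Du^\epsilon$ to pass to the limit in \eqref{epweaksol} and obtain \eqref{lpfp}, and convexity/weak lower semicontinuity of the weighted Dirichlet term to obtain \eqref{ssole}. Where you genuinely diverge is the complementarity identity \eqref{asdfghjk}: the paper's written proof ends after establishing $\int_{\Tt^d} g(m^\epsilon)-g(m)\,dx\to 0$ and never explicitly verifies \eqref{asdfghjk}, whereas you close this point with the energy identity $\int_{\Tt^d} m^\epsilon|Du^\epsilon|^2\,dx=\epsilon\int_{\Tt^d}\langle u^\epsilon\rangle(1-m^\epsilon)\,dx\to 0$ followed by the squeeze against the integrated limit identity $\Hh-\int_{\Tt^d} Vm\,dx+\int_{\Tt^d} g(m)m\,dx=0$; this forces $m|Du|^2=0$ a.e., which is even stronger than \eqref{asdfghjk} itself. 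Your mechanism is essentially the one the paper deploys only later, in the Mather-measure section: identity \eqref{CCC} in Proposition \ref{P74} is your Fokker--Planck-tested-with-$u^\epsilon$ computation in disguise, and Proposition \ref{P76} extracts $\int_{\Tt^d} m|Du|^2\,dx=0$ for the limit problem in the same way. So your route buys a self-contained stability proof including the pointwise complementarity, at the cost of one step you gloss over: \eqref{epweaksol} holds only in the sense of distributions, so testing it with the merely-$W^{1,2}$ function $u^\epsilon$ needs justification --- test with $\eta_\delta * u^\epsilon$ and let $\delta\to 0$ using the convergence $\eta_\delta * Du^\epsilon\to Du^\epsilon$ in $L^2_{1+m^\epsilon}(\Tt^d)$, which is exactly the device the paper sets up in the proof of Proposition \ref{P74}; with that borrowed, the remaining limits you invoke (e.g.\ $\int_{\Tt^d} g(m^\epsilon)m^\epsilon\,dx\to\int_{\Tt^d} g(m)m\,dx$ via the $L^{(\alpha+1)/\alpha}\times L^{\alpha+1}$ pairing) all check out.
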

\begin{proof}
	Properties 1 and 3 in Theorem \ref{ws} are immediate; that is, 
	\[
	m\geq 0, \qquad \int_{\Tt^d} m dx=1, \qquad \|{ u^\epsi -\int_{\Tt^d} u^\epsi dx}\|_{W^{1,2}(\Tt^d)}\leq C. 
	\]
	
	From Property 2, 
	we conclude that, through a subequence $(m^\epsilon)^{\frac{\alpha+1}{2}}$, converges weakly
	in $W^{1,2}$ to a function $\eta(x)$. Moreover,  by Rellich-Kondrachov theorem, 
	$(m^\epsilon)^{\frac{\alpha+1}{2}}\to \eta$ in $L^2$ and extracting a further sequence if necessary 
	also almost everywhere. Therefore, $m^\epsilon\to \eta^{\frac{2}{\alpha+1}}=m$ almost everywhere. 
	Accordingly $(m^\epsilon)^{\frac{\alpha+1}{2}}$ converges to $m^{\frac{\alpha+1}{2}}$ weakly in $W^{1,2}$, strongly in $L^2$ and almost everywhere. Consequently
	\[
	\|m^{\frac{\alpha+1}{2}}\|_{W^{1,2}(\Tt^d)}\leq C.
	\]
	Next, we examine Property 5 in Theorem \ref{ws}. By compactness,  $(m^\epsilon)^{\frac{\alpha+1} 2}Du^\epsilon$
	converges weakly in BV, through a subsequence, to a function $\psi\in BV$ with
	$\|\psi\|_{BV(\Tt^d)}\leq C$. 
	Because $(m^\epsilon)^{\frac{\alpha+1} 2}$ converges to $m^{\frac{\alpha+1} 2}$ strongly in $L^2$ and $Du^\epsilon$ converges weakly to $Du$ in $L^2$, we have for any 
	$\varphi\in C^\infty(\Tt^d)$ 
	\[
	\int_{\Tt^d} \varphi (m^\epsilon)^{\frac{\alpha+1} 2}Du^\epsilon dx 
	\to 
	\int_{\Tt^d} \varphi  m^{\frac{\alpha+1} 2}Du dx. 
	\]
	Therefore, 
	\[
	\psi=m^{\frac{\alpha+1} 2}Du. 
	\]
	
	Finally, we address the limit properties corresponding to 
	\eqref{subsol}, \eqref{intweaksol} and \eqref{epweaksol}.
	We take a smooth function, $\varphi\in C^\infty(\Tt^d)$, multiply \eqref{epweaksol} by $\varphi$ and integrate. 
	Because
	\[
	\int_{\Tt^d} \epsilon (m^\epsilon-1)\varphi dx \to 0, 
	\]
	we have
	\[
	\int_{\Tt^d} m^\epsilon D \varphi Du^\epsilon dx \to 0.
	\]
Because of Rellich-Kondrachov theorem
\[
\int_{\Tt^d} (m^\epsilon)^{q\frac{\alpha+1} 2} { dx}\to \int_{\Tt^d} m^{q\frac{\alpha+1} 2} { dx}, 
\]
for any $q<2^*$. In particular,
for $\alpha$ in the range of Assumption \ref{H3}
this implies $m^\epsilon\to m$ strongly in $L^2$. Using the weak convergence of $u^\epsilon$
in $W^{1,2}$ we conclude that 
\[
\int_{\Tt^d} m D \varphi Du dx =0.
\]	
	Next, we select a smooth non-negative function, $\varphi\geq 0$, multiply 
	\eqref{subsol} by $\varphi$, and integrate in $\Tt^d$. We have
	\[
	\int_{\Tt^d}  \left(\epsilon u^\epsilon - V(x)\right) \varphi dx \to \int_{\Tt^d} \left(-\Hh -V(x) \right) \varphi dx. 
	\]
	Moreover, by convexity 
	\[
	\liminf_{\epsilon \to 0 }\int_{\Tt^d} \frac{|Du^\epsilon |^2}{2} \varphi dx
	\geq \int_{\Tt^d} \frac{|Du |^2}{2} \varphi dx.
	\]
Finally, we observe that
\[
\int_{\Tt^d} g(m^\epsilon)-g(m) { dx}
=\int_{\Tt^d} \int_0^1 g'(s m^\epsilon+(1-s) m) (m^\epsilon-m){dx}.
\]
For any $\alpha>0$, we can select
$p$ and $p'$ such that $\frac 1 p+\frac 1 {p'}+1$, 
 $p (\alpha-1)<\frac{2^*}{2} (\alpha+1)$, and  $p'<\frac{2^*}{2} (\alpha+1)$. 
 Next, we estimate
\[
\|g'(s m^\epsilon+(1-s) m)\|_{L^p}\leq C (\|m^\epsilon\|_{L^{p(\alpha-1)}}+\|m\|_{L^{p(\alpha-1)}})\leq C. 
\]
Therefore, 
since $\|m^\epsilon-m\|_{L^{p'}}\to 0$, we conclude that 
\[
\int_{\Tt^d} g(m^\epsilon)-g(m) {dx}\to 0. \qedhere
\]	
\end{proof}

\begin{pro}
Suppose that Assumption \ref{H3} holds. 
Let $(m^\epsilon, u^\epsilon)$ be a regular weak solution of Problem \ref{P1}. Then, 
there exists a constant $C>0$ independent of $\epsilon$ such that  
\begin{equation}
\label{EZZ}
\int_{\Tt^d} |Du^\epsilon|^2 m^\epsilon +m^\epsilon g(m^\epsilon) dx\leq C. 
\end{equation} 
\end{pro}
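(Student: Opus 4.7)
My plan is to reproduce, in the weak solution setting, an energy identity analogous to \eqref{iddd}: I will combine the complementarity condition \eqref{intweaksol} with the Fokker--Planck equation \eqref{epweaksol} tested against $u^\epsilon$ in order to extract simultaneous control of $\int m^\epsilon|Du^\epsilon|^2\,dx$ and $\int m^\epsilon g(m^\epsilon)\,dx$, and then bound the resulting right-hand side using the uniform bounds from Theorem \ref{ws}.

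Expanding the product in \eqref{intweaksol} yields, almost everywhere,
\[
\frac{m^\epsilon|Du^\epsilon|^2}{2}+\epsilon u^\epsilon m^\epsilon+V(x)m^\epsilon=m^\epsilon g(m^\epsilon).
\]
Before integrating, I verify that every term on the left is in $L^1(\Tt^d)$: the term $Vm^\epsilon$ is controlled by $\|V\|_\infty$ together with estimate 1 of Theorem \ref{ws}; the term $\epsilon u^\epsilon m^\epsilon$ lies in $L^1$ by H\"older, using estimates 3 and 4, the Sobolev embedding of $W^{1,2}$, and the $L^q$ bound on $m^\epsilon$ coming from estimate 2; the equation then forces $m^\epsilon|Du^\epsilon|^2\in L^1$ as well. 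Integrating over $\Tt^d$ gives
\[
\int_{\Tt^d}\frac{m^\epsilon|Du^\epsilon|^2}{2}+\epsilon u^\epsilon m^\epsilon+Vm^\epsilon\,dx=\int_{\Tt^d}m^\epsilon g(m^\epsilon)\,dx. \qquad (\mathrm{A})
\]
Next, I test \eqref{epweaksol} against $u^\epsilon$. The integration by parts
\[
-\int_{\Tt^d}u^\epsilon\,\operatorname{div}(m^\epsilon Du^\epsilon)\,dx=\int_{\Tt^d}m^\epsilon|Du^\epsilon|^2\,dx
\]
is legitimate because $m^\epsilon|Du^\epsilon|^2\in L^1$ by the previous step and $u^\epsilon\in W^{1,2}$, so a standard mollification of $u^\epsilon$ suffices. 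This yields
\[
\int_{\Tt^d}\epsilon u^\epsilon m^\epsilon\,dx+\int_{\Tt^d}m^\epsilon|Du^\epsilon|^2\,dx=\epsilon\int_{\Tt^d}u^\epsilon\,dx. \qquad (\mathrm{B})
\]
Using (B) to eliminate $\epsilon\int u^\epsilon m^\epsilon\,dx$ in (A) gives the key identity
\[
\int_{\Tt^d}\frac{m^\epsilon|Du^\epsilon|^2}{2}+m^\epsilon g(m^\epsilon)\,dx=\epsilon\int_{\Tt^d}u^\epsilon\,dx+\int_{\Tt^d}Vm^\epsilon\,dx. \qquad (*)
\]

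To finish, I bound the right-hand side of $(*)$: estimate 4 of Theorem \ref{ws} yields $|\epsilon\int u^\epsilon\,dx|\leq C$, while estimate 1 together with $V\in L^\infty$ gives $|\int Vm^\epsilon\,dx|\leq\|V\|_\infty$. The Remark following Assumption \ref{H3} gives $m^\epsilon g(m^\epsilon)\geq \tilde c_1(m^\epsilon)^{\alpha+1}-Cm^\epsilon\geq -Cm^\epsilon$, hence $\int m^\epsilon g(m^\epsilon)\,dx\geq -C$. Inserting these into $(*)$ shows first that $\int m^\epsilon|Du^\epsilon|^2\,dx\leq C$; feeding this back into $(*)$ gives $\int m^\epsilon g(m^\epsilon)\,dx\leq C$. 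Adding the two yields \eqref{EZZ}.

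The main obstacle is the technical justification of the two integrations: checking that each term in the complementarity equation is a.e. integrable so that one may pass from the pointwise identity to (A), and justifying the integration by parts in (B) given only $u^\epsilon\in W^{1,2}$ and $m^\epsilon Du^\epsilon$ with the limited BV regularity of estimate 5. Both are handled by the a priori bounds of Theorem \ref{ws} combined with Sobolev embeddings and a standard mollification of $u^\epsilon$.
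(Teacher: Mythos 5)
Your proof is correct, but it takes a genuinely different route from the paper's. The paper's argument is shorter and uses only the complementarity condition \eqref{intweaksol}: it first bounds $\int_{\Tt^d} m^\epsilon g(m^\epsilon)\,dx$ directly from the \emph{upper} growth bound $g(m)\leq \tilde c_2 m^\alpha + C$ of the Remark after Assumption \ref{H3} together with estimates 1--2 of Theorem \ref{ws} (since $\int (m^\epsilon)^{\alpha+1}dx = \|(m^\epsilon)^{\frac{\alpha+1}{2}}\|_{L^2}^2 \leq C$), and then integrates \eqref{intweaksol} to extract the kinetic term -- tacitly using the same H\"older/Sobolev control of $\epsilon\int u^\epsilon m^\epsilon\,dx$ via estimates 3--4 that you spell out explicitly. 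You instead also test the Fokker--Planck equation \eqref{epweaksol} against $u^\epsilon$, reproducing the classical-solution identity \eqref{iddd} in the weak setting; your identity (B) is in fact exactly the identity \eqref{3X} that the paper itself establishes (by the same mollification device, using $Du^\epsilon\in L^2_{1+m^\epsilon}$) in the proof of Proposition \ref{P74}, so that step is sound within the paper's own framework. What your route buys: the clean identity $(*)$ whose right-hand side is controlled by estimate 4 and $\|V\|_\infty$ alone, so that quantitatively you only need the \emph{lower} bound $m g(m)\geq -Cm$ rather than the $L^{\alpha+1}$ bound on $m^\epsilon$; what it costs is the extra mollification argument and the up-front integrability checks. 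One small elision: before concluding from the pointwise identity that $m^\epsilon|Du^\epsilon|^2\in L^1$, you need $m^\epsilon g(m^\epsilon)\in L^1$ on the right-hand side, which does require the upper growth bound from the Remark together with estimate 2 -- so the upper bound is not entirely avoidable in your argument either, though it enters only qualitatively. With that noted, every step checks out and the two bounds follow as you state.
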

\begin{proof}
Because of properties 1 and 2 in Theorem \ref{ws}, Assumption \ref{H3} implies that  
\[
\int_{\Tt^d} m^\epsilon g(m^\epsilon) dx\leq C. 
\]
Then, \eqref{EZZ} follows by integrating \eqref{intweaksol}.
\end{proof}

\begin{remark}
If Assumptions \ref{H3}  holds, a similar estimate holds for regular weak solutions
$(m, u, \Hh)$ of Problem \ref{P2}. Namely,
there exists a constant $C>0$ such that  
\[
\int_{\Tt^d} |Du|^2 m  +m  g(m) dx\leq C. 
\] 
\end{remark}

%
%

\subsection{Mather measures}
\label{SSMM}

We begin by introducing a class of phase-space
probability measures called Mather measures, see \cite{Mane2} and \cite{Ma}.  
These measures were introduced in the context of Lagrangian mechanics and later used
to  examine the properties of Hamilton--Jacobi equations in \cite{FATH1, FATH2, FATH3, FATH4}
and in \cite{EGom1, EGom2}. In the context of the selection problem, 
generalized Mather measures were first used in \cite{MR2458239}.
As previously, we suppose that Assumption \ref{H3} holds. Accordingly, 
we work with regular weak solutions of Problems \ref{P1} and \ref{P2}. 

Fix a regular weak solution $(u^\epsi,m^\epsi)$ of Problem \ref{P1} and a regular weak solution 
$(u, m)$ of Problem \ref{P2}. 
Next, we rewrite \eqref{DP} as
\begin{equation}
\label{DP2}
        \begin{cases}
                &\epsi u^\epsi+\frac{1}{2}|Du^\epsi|^2 +W^\epsi(x)=0 \quad\rm{in}\  \Tt^d, \\
                &\epsi m^\epsi -\mathrm{div}(m^\epsi Du^\epsi) = \epsi \quad \rm{in} \ \Tt^d, 
        \end{cases}
\end{equation}
where $W^\epsi(x)=V(x)-g(m^\epsi)$ and, assuming without loss of generality 
that $\Hh=0$, we rewrite \eqref{CP} as
\begin{equation}
\label{CP2}
        \begin{cases}
                &\frac{1}{2}|Du|^2+W(x)=0 \quad\rm{in}\  \Tt^d, \\
                & -\mathrm{div}(mDu) = 0 \quad \rm{in} \ \Tt^d, 
        \end{cases}
\end{equation}
where $W(x)=V(x)-g(m)$.

\begin{pro}
\label{P74}
Suppose that Assumption \ref{H3} holds.
Let $(u^\epsilon, m^\epsilon)$ be a regular weak solution of Problem \ref{P1}. 
Let $L^\epsi=\frac{1}{2}|v|^2-W^\epsi(x)$ with $W^\epsi(x)=V(x)-g(m^\epsi)$. 
Consider the phase-space measure, $\mu^\epsi$, the $\epsi$-Mather measure,
determined by
\begin{equation}
\label{emm}
\int_{\Tt^d\times \Rr^d} \phi(x,v)\mbox{ d}\mu^\epsi=\int_{\Tt^d}\phi(x,-Du^\epsi)m^\epsi \,\dx
\end{equation}
for all $\phi\in C(\Tt^d\times \Rr^d)$. 
Then, $\mu^\epsi$
satisfies the discounted holonomy condition
\begin{equation}
\label{dhc}
\int_{\Tt^d\times \Rr^d}-\epsi \varphi(x)+vD_x \varphi(x)\mbox{ d}\mu^\epsi=-\epsi \int_{\Tt^d}\varphi \,\d x
\end{equation}
for all $\varphi\in C^1(\Tt^d)$. 
Moreover, we have
\begin{equation}\label{CCC}
\int_{\Tt^d\times \Rr^d} L^\epsi(x,v)\mbox{ d}\mu^\epsi
=\epsi \int_{\Tt^d} u^\epsi \,\dx. 
\end{equation}
\end{pro}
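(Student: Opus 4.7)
The proof decomposes into two independent verifications, each obtained by pairing one of the equations in \eqref{DP2} with a suitable test function and applying the defining identity \eqref{emm} of $\mu^\epsi$.

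For the discounted holonomy condition, the plan is to start from the transport equation
\[
\epsi m^\epsi -\mathrm{div}(m^\epsi Du^\epsi) = \epsi,
\]
which holds in the sense of distributions and almost everywhere by \eqref{epweaksol}. Multiplying by $\varphi\in C^1(\Tt^d)$ and integrating by parts gives
\[
\epsi\int_{\Tt^d}\varphi m^\epsi \,dx +\int_{\Tt^d} m^\epsi Du^\epsi\cdot D\varphi \,dx = \epsi\int_{\Tt^d}\varphi\,dx.
\]
The key then is to rewrite both integrals on the left using the pushforward property \eqref{emm}: with $v=-Du^\epsi$ on the support of $\mu^\epsi$, the integrand $-\epsi\varphi+vD_x\varphi$ pushes back exactly to $-\epsi\varphi m^\epsi - Du^\epsi\cdot D\varphi\, m^\epsi$, so multiplying the above identity by $-1$ yields \eqref{dhc}. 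This step is straightforward once one checks that $m^\epsi Du^\epsi\in L^1$, which is already built into the regular weak solution estimates via Property~5 in Theorem \ref{ws}.

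For the Lagrangian identity \eqref{CCC}, the strategy is to combine the Hamilton--Jacobi equation tested against $m^\epsi$ with the holonomy condition tested against $u^\epsi$. First, since \eqref{intweaksol} gives equality of the HJ expression on the support of $m^\epsi$, multiplying by $m^\epsi$ and integrating produces
\[
\epsi\int_{\Tt^d}u^\epsi m^\epsi\,dx+\int_{\Tt^d}\tfrac12|Du^\epsi|^2 m^\epsi\,dx+\int_{\Tt^d}W^\epsi m^\epsi\,dx=0.
\]
Second, applying the holonomy relation \eqref{dhc} with the formal choice $\varphi=u^\epsi$ and using $v\cdot Du^\epsi=-|Du^\epsi|^2$ on the support of $\mu^\epsi$ gives
\[
\int_{\Tt^d}|Du^\epsi|^2 m^\epsi\,dx=\epsi\int_{\Tt^d}u^\epsi\,dx-\epsi\int_{\Tt^d}u^\epsi m^\epsi\,dx.
\]
Adding the first identity to itself to eliminate $W^\epsi$ (via the definition $L^\epsi=\tfrac12|v|^2-W^\epsi$) and substituting the second yields, after a short algebraic manipulation,
\[
\int_{\Tt^d\times\Rr^d}L^\epsi d\mu^\epsi=\int_{\Tt^d}|Du^\epsi|^2 m^\epsi\,dx+\epsi\int_{\Tt^d}u^\epsi m^\epsi\,dx=\epsi\int_{\Tt^d}u^\epsi\,dx,
\]
which is the claimed identity.

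The main technical issue, and essentially the only nontrivial point, is the use of $\varphi=u^\epsi$ in \eqref{dhc}, since that relation was derived for $C^1$ test functions while $u^\epsi$ lies only in $W^{1,2}(\Tt^d)$. The plan to handle this is a density argument: approximate $u^\epsi$ by a mollified sequence $\varphi_n\in C^\infty(\Tt^d)$ converging to $u^\epsi$ in $W^{1,2}$, apply \eqref{dhc} to each $\varphi_n$, and pass to the limit using that $m^\epsi\in L^q$ and $m^\epsi Du^\epsi\in L^p$ for suitable conjugate exponents thanks to the regular weak solution bounds \textbf{(1)}--\textbf{(5)} in Theorem \ref{ws}. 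Once this is justified, the rest of the proof is purely algebraic.
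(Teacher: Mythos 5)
Your overall route coincides with the paper's: the holonomy condition is read off directly from the distributional form of \eqref{epweaksol}, and \eqref{CCC} is obtained by combining \eqref{intweaksol} integrated against $m^\epsi$ with the holonomy condition applied to $u^\epsi$ itself, the latter justified by mollification. Your algebra is correct (the identity $\int L^\epsi\,d\mu^\epsi=\int_{\Tt^d}|Du^\epsi|^2m^\epsi\,dx+\epsi\int_{\Tt^d}u^\epsi m^\epsi\,dx$ does follow from \eqref{intweaksol}, and the holonomy relation with $\varphi=u^\epsi$ then yields \eqref{CCC}). However, your justification of the limit passage is where there is a genuine gap: you propose to conclude via H\"older with ``suitable conjugate exponents,'' pairing $D\varphi_n-Du^\epsi\to 0$ against $m^\epsi Du^\epsi$. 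Mollification converges only in the norms where the function already lives, so $D\varphi_n\to Du^\epsi$ only in $L^2(\Tt^d)$, and closing that pairing would require $m^\epsi Du^\epsi\in L^2(\Tt^d)$. This is \emph{not} available from estimates 1--5 of Theorem \ref{ws} in the admissible range of Assumption \ref{H3}: for instance, with $d=3$ and $\alpha$ small, interpolating $(m^\epsi)^{\frac{\alpha+1}{2}}Du^\epsi\in BV\subset L^{d/(d-1)}$ with $m^\epsi\in L^{\frac{2^*}{2}(\alpha+1)}$ only gives $m^\epsi Du^\epsi\in L^r$ with $r<2$. So the plan, taken literally, does not close on the gradient term.

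The paper's proof resolves exactly this point by working in the weighted Hilbert space $L^2_{1+m^\epsi}(\Tt^d)$. The energy bound $\int_{\Tt^d}m^\epsi|Du^\epsi|^2\,dx\leq C$ (a consequence of \eqref{subsol} and \eqref{intweaksol}; see \eqref{EZZ}) gives $Du^\epsi\in L^2_{1+m^\epsi}(\Tt^d)$, and the mollifications $\eta_\delta*u^\epsi$ converge in this weighted norm. Then the troublesome term is handled by Cauchy--Schwarz in the weighted space,
\begin{equation}
\left|\int_{\Tt^d}m^\epsi\,Du^\epsi\cdot\bigl(D(\eta_\delta*u^\epsi)-Du^\epsi\bigr)\,dx\right|
\leq \left(\int_{\Tt^d}m^\epsi|Du^\epsi|^2\,dx\right)^{\frac12}\left(\int_{\Tt^d}m^\epsi\bigl|D(\eta_\delta*u^\epsi)-Du^\epsi\bigr|^2\,dx\right)^{\frac12},
\end{equation}
where the second factor vanishes as $\delta\to0$ precisely because of the weighted convergence. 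In short: your structure and computations match the paper, but the density argument must be run in the $m^\epsi$-weighted $L^2$ topology rather than with unweighted conjugate-exponent H\"older, and the weighted energy estimate is the ingredient your proposal is missing.
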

\begin{proof}
Because \eqref{epweaksol} holds in the sense of distributions, 
the discounted holonomy condition for 
$\mu^\epsi$, \eqref{dhc}, follows immediately.
Next, recall that if $m^\epsilon$ is an integrable non-negative function
then the space, $L^2_{1+m^\epsilon}(\Tt^d)$, of all measurable functions $f:\Tt^d\to \Rr$ that satisfy
\[
\int_{\Tt^d}  |f|^2 (1+m^\epsilon) dx<\infty
\]
is a Hilbert space. Moreover, if $\eta_\delta$ is a standard mollifier, 
we have
\[
\eta_\delta * f\to f
\]
in $L^2_{1+m^\epsilon}(\Tt^d)$. Due to \eqref{subsol} and \eqref{intweaksol}, 
we have
\[
Du^\epsilon\in L^2_{1+m^\epsilon}(\Tt^d).
\]
Accordingly, because of \eqref{dhc}, we have
\[
\int_{\Tt^d\times \Rr^d}-\epsi (\eta_\delta *u^\epsilon)+vD_x (\eta_\delta *u^\epsilon) \mbox{ d}\mu^\epsi=-\epsi \int_{\Tt^d}(\eta_\delta *u^\epsilon)\,\d x.
\]
Taking the limit $\delta\to 0$, we obtain
\begin{equation}
\label{3X}
\int_{\Tt^d\times \Rr^d}-\epsi u^\epsilon+vD_x u^\epsilon\mbox{ d}\mu^\epsi=-\epsi \int_{\Tt^d}u^\epsilon\,\d x.
\end{equation}
Taking into account the definition of
$L^\epsilon$ and using the identities \eqref{intweaksol} and \ref{3X}, we conclude that 
\begin{align}\label{CCC2}
&\int_{\Tt^d\times \Rr^d} L^\epsi(x,v)\mbox{ d}\mu^\epsi
=\int_{\Tt^d\times \Rr^d} \frac{1}{2}|v|^2-W^\epsi(x) \mbox{ d}\mu^\epsi \\ \notag
&=\int_{\Tt^d} \left(-\frac{1}{2}|Du^\epsi|^2-W^\epsi(x)\right)m^\epsi \,\dx+\int_{\Tt^d\times \Rr^d} vD_xu^\epsi-\epsi u^\epsi \mbox{ d}\mu^\epsi+\epsi \int_{\Tt^d} u^\epsi \,\dx \\ \notag
&=\epsi \int_{\Tt^d} u^\epsi \,\dx. 
\end{align}

\end{proof}

\begin{remark}
From \eqref{dhc} and \eqref{CCC}, we conclude that 
$\mu^\epsi$ is a discounted Mather measure with trace $\d x$ in the sense of the definition in \cite{MR2458239}.
\end{remark}

Similarly, for Problem \ref{P2}, we have the following result. 

\begin{pro}
\label{P76}
	Suppose that Assumption \ref{H3} holds.
	Let $(u, m, \Hh)$ be a regular weak solution of Problem \ref{P2}. Assume without loss
	of generality that $\Hh=0$.  
	Let $L=\frac{1}{2}|v|^2-W$ with $W(x)=V(x)-g(m)$. 
	Consider the phase-space measure, $\mu$, the Mather measure,
	determined by
\begin{equation*}
\int_{\Tt^d\times \Rr^d} \phi(x,v)\mbox{ d}\mu=\int_{\Tt^d} \phi(x,-Du)m\,\dx,
\end{equation*}
for all $\phi\in C(\Tt^d\times \Rr^d)$. 
Then, $\mu$ satisfies
the holonomy constraint
\begin{equation}
\label{hc}
\int_{\Tt^d\times \Rr^d} vD_x\varphi(x)\mbox{ d}\mu=0 
\end{equation}
for all $\varphi\in C^1(\Tt^d)$. 
Moreover
\begin{equation*}
\int_{\Tt^d \times \Rr^d} L(x,v)\mbox{ d}\mu=\int_{\Tt^d} \left(\frac{1}{2}|Du|^2-W(x)\right)m\,\dx=0. 
\end{equation*} 
\end{pro}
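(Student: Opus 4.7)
The proof follows the template of Proposition \ref{P74}, simplified by the fact that the discount parameter $\epsilon$ is no longer present. The strategy is to read off the two assertions directly from the defining identities \eqref{lpfp} and \eqref{asdfghjk} of a regular weak solution of Problem \ref{P2}, after pushing forward under the map $x \mapsto (x,-Du(x))$.

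For the holonomy constraint \eqref{hc}, I would fix $\varphi \in C^1(\Tt^d)$ and test the Fokker--Planck equation \eqref{lpfp} against it. Since $-\mathrm{div}(mDu) = 0$ holds in the sense of distributions, integration by parts gives
\[
\int_{\Tt^d} mDu \cdot D\varphi \, dx = 0.
\]
The integrability of $mDu \cdot D\varphi$ is guaranteed because $m \in L^2(\Tt^d)$ (which follows from Property 2 of Theorem \ref{ws} together with Assumption \ref{H3}) and $Du \in L^2(\Tt^d)$ (Property 3), while $D\varphi$ is bounded. By the definition of $\mu$ as the pushforward of $m\, dx$ under $x \mapsto (x,-Du(x))$, setting $v = -Du$ in the integrand yields
\[
\int_{\Tt^d \times \Rr^d} v \cdot D_x \varphi(x) \, d\mu = -\int_{\Tt^d} mDu \cdot D\varphi \, dx = 0,
\]
which is exactly \eqref{hc}.

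For the Lagrangian identity, I first unpack the definition of $\mu$:
\[
\int_{\Tt^d \times \Rr^d} L(x,v) \, d\mu = \int_{\Tt^d} \left(\tfrac{1}{2}|Du|^2 - W(x)\right) m \, dx,
\]
which is finite by the remark following Proposition \ref{stabilitypro} giving $\int m|Du|^2 + mg(m)\, dx \leq C$ for regular weak solutions of Problem \ref{P2}. Since $W(x) = V(x) - g(m)$ and we assume $\bar{H} = 0$, the complementarity relation \eqref{asdfghjk} reads
\[
m\left(-\tfrac{|Du|^2}{2} - V(x) + g(m)\right) = 0 \quad \text{a.e.}
\]
Multiplying by $-1$ and integrating over $\Tt^d$ gives $\int_{\Tt^d}\left(\tfrac{1}{2}|Du|^2 - W(x)\right)m\, dx = 0$, which closes the proof.

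There is no real obstacle here: the argument is essentially algebraic once the regular-weak-solution framework is in place. The only point requiring minor care is the justification that the various products ($mDu \cdot D\varphi$, $m|Du|^2$, $mg(m)$) lie in $L^1$, which is handled by Theorem \ref{ws} Properties 1--3 combined with Assumption \ref{H3}; unlike the discounted case of Proposition \ref{P74}, no mollification of $u$ is needed because we never pair $Du$ against itself in the distributional testing step.
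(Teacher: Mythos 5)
Your treatment of the holonomy constraint is correct and matches the paper's intent (the paper simply declares the proof ``analogous to Proposition \ref{P74}''), but the Lagrangian identity contains a genuine gap, not a fixable typo. With $\Hh=0$ and $W=V-g(m)$, the complementarity relation \eqref{asdfghjk} reads $\bigl(-\tfrac{|Du|^2}{2}-W\bigr)m=0$ almost everywhere; multiplying by $-1$ and integrating yields
\[
\int_{\Tt^d}\Bigl(\tfrac12|Du|^2+W\Bigr)m\,\dx=0,
\]
with $+W$, whereas the quantity the proposition asserts to vanish is $\int_{\Tt^d}\bigl(\tfrac12|Du|^2-W\bigr)m\,\dx$, with $-W$. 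These are not the same: on $\{m>0\}$ complementarity gives $\tfrac12|Du|^2=-W$, so the integrand $\tfrac12|Du|^2-W$ equals $|Du|^2$ there, and the claim is equivalent to
\[
\int_{\Tt^d} m|Du|^2\,\dx=0,
\]
which does not follow from \eqref{asdfghjk} at all.

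That missing identity is exactly the step you dismissed in your closing remark: it is the holonomy constraint \eqref{hc} applied with $\varphi=u$, i.e., testing $-\div(mDu)=0$ against $u$ itself, which forces the pairing of $Du$ with $Du$. Since $u$ is only $W^{1,2}(\Tt^d)$ and not $C^1$, this is precisely where the mollification machinery of Proposition \ref{P74} is required in the undiscounted case as well: one notes $Du\in L^2_{1+m}(\Tt^d)$ (from estimate 3 of Theorem \ref{ws} together with $\int_{\Tt^d} m|Du|^2\,\dx\leq C$), applies \eqref{hc} to $\eta_\delta*u$, and lets $\delta\to 0$ to obtain the analogue of \eqref{3X}, namely $\int_{\Tt^d\times\Rr^d} vD_xu\,\d\mu=-\int_{\Tt^d} m|Du|^2\,\dx=0$. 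Combining this with the correctly signed complementarity identity via $\tfrac12|Du|^2-W=-\bigl(\tfrac12|Du|^2+W\bigr)+|Du|^2$ then gives $\int L\,\d\mu=0$. So the assertion ``no mollification of $u$ is needed because we never pair $Du$ against itself'' is exactly where your proof breaks down; ironically, it is this energy identity $\int m|Du|^2\,\dx=0$ (i.e., $mDu=0$ a.e.) that explains why, in the paper's explicit examples, $u$ is constant on the support of $m$.
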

\begin{proof}
The proof is analogous to the one of Proposition \ref{P74}
\end{proof}



\subsection{Selection}
\label{SSS}

The goal of this section is to prove Theorem \ref{result2} and, hence, 
establish a selection criterion for the limit of $u^\epsi$ and prove the convergence of $m^\epsilon$. 
Our proof is inspired by the one in \cite{MR2458239}. 
%
%
%
%
%
%
%
%
%
%
%

\begin{proof}[Proof of Theorem \ref{result2}]
Let  $(u^\epsi,m^\epsi)$ be a regular weak solution of Problem \ref{P1}.
Suppose that $u^\epsi -\int u^\epsi dx \rightharpoonup \bar{u}$ in $H^1(\Tt^d)$ and that 
$m^\epsi\rightharpoonup \bar{m}$ weakly in $L^{1}$. Note that due to the bounds
Theorem \ref{ws}, we have that $m^\epsi\to \bar m$ strongly in $L^p$ 
 for $p<\frac{2^*}{2}(\alpha+1)$. 
Let $(u,m)$ be a regular weak solution of Problem \ref{P2}. 
Let $\mu^\epsi$ and $\mu$ be the Mather measure constructed in the previous section in 
Propositions \ref{P74} and \ref{P76}. 

For any $v \in \Rr^d$ and almost every $x \in \Tt^d$, we have
\begin{align*}
-v\cdot p-L^\epsi (x,v)\leq \frac{1}{2}|p|^2+W^\epsi (x), 
\end{align*}
where $W^\epsi$ is as in Proposition \ref{P74}. 
Consider a standard mollifier $\eta_\delta$, and let $p=D(\eta_\delta*u)$.  
Then,
 for  $v \in \Rr^d$ and almost every $x \in \Tt^d$,
\begin{align}
\label{A0}
-v\cdot D(\eta_\delta*u)-L^\epsi(x,v)+W-W^\epsi &\leq \frac{1}{2}|D(\eta_\delta*u)|^2+W^\epsi+W-W^\epsi\\&\notag =
\frac{1}{2}|D(\eta_\delta*u)|^2+W,
\end{align}
where $W$ is as in Proposition \ref{P76}. 
Integrating the left-hand side of the preceding expression with respect to $\mu^\epsi$  and using the holonomy condition,
\eqref{dhc}, and \eqref{CCC}, we obtain 
\begin{align}
\label{A1}
&\int_{\Tt^d\times \Rr^d} 
v\cdot D(\eta_\delta*u)-L^\epsi(x,v)+W-W^\epsi  d\mu^\epsilon\\
\notag &\quad =
-\epsi \int_{\Tt^d} (\eta_\delta*u) m^\epsi dx+\epsi \int_{\Tt^d} (\eta_\delta*u) dx-\epsi \int_{\Tt^d} u^\epsi dx +\int_{\Tt^d} (W-W^\epsi)m^\epsi dx\\
\notag &\quad =
-\epsi \int_{\Tt^d} (\eta_\delta*u) m^\epsi dx+\epsi \int_{\Tt^d} (\eta_\delta*u) dx-\epsi \int_{\Tt^d} u^\epsi dx +\int_{\Tt^d} (W-W^\epsi)m^\epsi dx.
\end{align}
Next, we integrate the right-hand side of \eqref{A0} and use Jensen's inequality to obtain
\begin{equation}
\label{A3}
\int_{\Tt^d\times \Rr^d} 
\frac{1}{2}|D(\eta_\delta*u)|^2+W d\mu^\epsilon
\leq 
\int_{\Tt^d\times \Rr^d} 
\frac{1}{2}\eta_\delta*(|Du|^2)+W d\mu^\epsilon
\leq 
\int_{\Tt^d\times \Rr^d}  -\eta_\delta*W+W d\mu^\epsilon, 
\end{equation}
taking into account \eqref{ssole}. 
Because 
$W\in L^{\frac{\alpha+1}{\alpha}}$, 
$\eta_\delta*W\to W$ in $ L^{\frac{\alpha+1}{\alpha}}$ as $\delta\to 0$. 
Therefore, taking into account that 
$m^\epsilon\in L^{\alpha+1}$, 
we have 
\[
\int_{\Tt^d\times \Rr^d}  -\eta_\delta*W+W d\mu^\epsilon \to 0, 
\] 
as $\delta\to 0$. 
Therefore, 
combining \eqref{A0}, \eqref{A1}, and \eqref{A3}, and 
by considering the limit
$\delta \to 0$, 
we conclude that 
\begin{equation}\label{PP1}
-\epsi \int_{\Tt^d} u m^\epsi dx+\epsi \int_{\Tt^d} u dx-\epsi \int_{\Tt^d} u^\epsi dx+\int_{\Tt^d} (W-W^\epsi)m^\epsi dx \leq 0.
\end{equation}
On the other hand, we observe that, for all $v\in \Rr^d$ and almost every $x \in \Tt^d$,
{
\begin{equation*}
\epsi u^\epsi+v\cdot D(\eta_\delta*u^\epsi)-L(x,v)+W^\epsi-W \leq \epsi u^\epsi+\frac{1}{2}|D(\eta_\delta*u^\epsi)|^2+W+W^\epsi-W.
\end{equation*}
}
Integrating with respect to $\mu$ and proceeding in a similar manner, we obtain
 \begin{equation}\label{PP2}
\epsi \int_{\Tt^d} u^\epsi m dx+\int_{\Tt^d} (W^\epsi-W)m dx \leq0.
\end{equation}
Next, from \eqref{PP1}, we gather
\begin{equation*}
-\epsi \int_{\Tt^d} \langle u \rangle m^\epsi dx-\epsi \int_{\Tt^d} u^\epsi dx+\int_{\Tt^d} (W-W^\epsi)m^\epsi dx \leq 0.
\end{equation*}
Finally, from \eqref{PP2}, we get
\begin{equation*}
\epsi \int_{\Tt^d} \langle u^\epsi \rangle m dx+\epsi \int_{\Tt^d} u^\epsi dx +\int_{\Tt^d} (W^\epsi-W)m dx \leq 0.
\end{equation*}


Adding the above two inequalities, we obtain
\begin{equation}
\label{RRR}
\epsi \left( \int_{\Tt^d} \langle u^\epsi \rangle m dx-\int_{\Tt^d} \langle u \rangle m^\epsi dx \right)+\int_{\Tt^d} (W-W^\epsi)(m^\epsi-m) dx \leq 0.
\end{equation}
By Poincar\'e's inequality, we have $\langle u^\epsi \rangle, \langle u \rangle  \in L^{2^*}$, uniformly in $\epsilon$. Moreover, $m, m^\epsilon\in L^{\frac{2^*}{2}(\alpha+1)}$, uniformly in $\epsilon$. Therefore,  
 $\int \langle u^\epsi \rangle m dx$ and $\int \langle u \rangle m^\epsi dx$ are bounded uniformly in  $\epsi$.
 Consequently,  the first term in the left-hand side of \eqref{RRR} converges to 0. Hence, we obtain \eqref{converge}.
Moreover, because the second term is non-negative, we conclude by the monotonicity of $g$ that
\[
\int_{\Tt^d} \langle u^\epsi \rangle m dx-\int_{\Tt^d} \langle u \rangle m^\epsi dx \leq 0. 
\]
 Hence, \eqref{criterion} holds.
\end{proof}

\subsection{An explicit example}
\label{SSEA}
Finally, in this subsection, we present an application of our selection criterion. We consider the following discount problem:
\begin{equation}\label{exDP}
        \begin{cases}
                &\epsi u^\epsi +\frac{1}{2}|u_x^\epsi|^2+\pi \cos(2\pi x)=m^\epsi \quad\rm{in}\  \Tt^1, \\
                & \epsi m^\epsi-(m^\epsi u_x^\epsi)_x = \epsi \quad \rm{in} \ \Tt^1, \\
        \end{cases}
\end{equation}
Thus $d=1$, $V(x)=\pi \cos(2\pi x)$, and $g(m)=m$.
The associated limit problems of \eqref{exDP} is the following:
\begin{equation}\label{exLP}
        \begin{cases}
                &\frac{1}{2}|u_x|^2+\pi \cos(2\pi x)=m+\Hh \quad\rm{in}\  \Tt^1, \\
                & -(mu_x)_x = 0 \quad \rm{in} \ \Tt^1, \\
                & m(x)\geq 0,\ \ \int_{\Tt^1} m \mbox{ }dx =1.\\
        \end{cases}
\end{equation}
 By Theorem \ref{ws}, there exists a regular weak solution $(u^\epsi,m^\epsi)\in H^1(\Tt^1)\times H^1(\Tt^1)$, of \eqref{exDP}.
We note that $u^\epsi -\int_{\Tt^d} u^\epsi dx$ converges along subsequence weakly in $H^1(\Tt^1)$. In view of Proposition \ref{stabilitypro}, the limit is a regular weak solution of \eqref{exLP}. However, regular weak solutions for \eqref{exLP} are not unique, as we show next.  

\begin{proposition}
Uniqueness of regular weak solutions for \eqref{exLP} does not hold. In particular, there exist regular weak solutions of \eqref{exLP}, $(\tilde u, m, \Hh)$ and $(\hat u,m,\Hh)$, with $\tilde u -\hat u$ not identically constant.
\end{proposition}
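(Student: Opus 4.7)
The plan is to adapt the explicit construction from Section \ref{aee} to the present setting, where the potential $V(x)=\pi\cos(2\pi x)$ has half the frequency of the earlier example. First, I would determine $\bar H$ and $m$. From the continuity equation $-(mu_x)_x=0$, the quantity $mu_x$ is constant on $\Tt^1$; periodicity of $u$ forces $u_x$ to vanish at any maximum point, so this constant is zero on the support of $m$. Hence $u$ is constant on $\{m>0\}$ and the complementarity relation \eqref{asdfghjk} forces $m=\pi\cos(2\pi x)-\bar H$ on that set. Taking $\bar H=0$ and
\[
m(x)=(\pi\cos(2\pi x))^+,
\]
a direct computation gives $\int_{\Tt^1}m\,\dx=\int_{-1/4}^{1/4}\pi\cos(2\pi x)\,\dx=1$, so the normalization is satisfied. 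The support of $m$ is $[-1/4,1/4]$ (modulo $1$), while $m$ vanishes on $(1/4,3/4)$.

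Next, I would construct the two candidates. On $\{m>0\}$ set $\tilde u_x=\hat u_x=0$. On $(1/4,3/4)$, where $V(x)=\pi\cos(2\pi x)\leq 0$, define
\begin{align*}
\tilde u_x(x)&=\sqrt{-2\pi\cos(2\pi x)}\bigl(\chi_{(1/4,1/2)}-\chi_{(1/2,3/4)}\bigr),\\
\hat u_x(x)&=\sqrt{-2\pi\cos(2\pi x)}\bigl(-\chi_{(1/4,1/2)}+\chi_{(1/2,3/4)}\bigr).
\end{align*}
Both have zero mean by symmetry, so they integrate to $W^{1,\infty}$-periodic functions $\tilde u,\hat u$ on $\Tt^1$.

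The main verification is that $(\tilde u,m,0)$ and $(\hat u,m,0)$ are regular weak solutions in the sense of Section \ref{WSS}. The continuity equation \eqref{lpfp} holds because $mu_x\equiv 0$ in both cases, which also gives \eqref{asdfghjk} trivially and makes the bound $\|m^{(\alpha+1)/2}Du\|_{BV}\leq C$ in estimate 5 automatic. On $\{m>0\}$, the identity $\tfrac12|u_x|^2+V=m$ reduces to $\pi\cos(2\pi x)=m$, which holds by construction; on $\{m=0\}$, the choice $|u_x|^2=-2\pi\cos(2\pi x)$ gives $\tfrac12|u_x|^2+V=0$, so the subsolution inequality \eqref{ssole} holds with equality. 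The remaining estimates 1--4 follow from $u_x\in L^\infty$ and $m\in W^{1,\infty}$; in particular, $m^{(\alpha+1)/2}\in W^{1,2}(\Tt^1)$ because $m$ vanishes linearly at $x=\pm 1/4$. The non-uniqueness is then visible from
\[
\tilde u_x-\hat u_x=2\sqrt{-2\pi\cos(2\pi x)}\bigl(\chi_{(1/4,1/2)}-\chi_{(1/2,3/4)}\bigr),
\]
which does not vanish identically, so $\tilde u-\hat u$ is not constant.

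The only mildly delicate point is checking the regularity bound $\|m^{(\alpha+1)/2}\|_{W^{1,2}}\leq C$ near the free-boundary points $x=\pm 1/4$ where $m$ has a Lipschitz kink; this is straightforward since $m$ is bounded and Lipschitz and $(\alpha+1)/2\geq 1/2$ preserves square-integrability of the derivative. Everything else reduces to the elementary verification outlined above, entirely parallel to the computation in Section \ref{aee}.
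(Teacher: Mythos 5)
Your proposal is correct and follows essentially the same route as the paper: deduce $mu_x\equiv 0$ from the continuity equation and periodicity, use the complementarity relation \eqref{asdfghjk} to pin down $\Hh=0$ and $m=(\pi\cos(2\pi x))^{+}$, and then exploit the sign freedom of $u_x$ on $\{m=0\}$ subject to $|u_x|^2\leq -2\pi\cos(2\pi x)$ to produce two non-equivalent solutions. Your second solution $\hat u=-\tilde u$ differs only cosmetically from the paper's four-interval alternating choice, and your explicit verification of \eqref{ssole}, \eqref{asdfghjk}, \eqref{lpfp} and the estimates is, if anything, more detailed than the paper's.
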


\begin{proof}
The second equation in \eqref{exLP} gives that $mu_x \equiv 0$ almost everywhere. Thus, 
\begin{equation*}
\label{intweaksol22}
\lambda-\pi \cos(2\pi x)+m=0, 
\end{equation*}
almost everywhere in $\{x\in \Tt : \: m(x)>0\}$.
Then,
\[ 1=\int_{\Tt^1} m \mbox{ }dx=\int_{\{m>0\}} m \mbox{ }dx=\int_{\Tt^1} (\pi \cos(2\pi x)-\lambda)_+ \mbox{ }dx.\]
Accordingly, $\lambda=0$ and $m(x)=(\pi \cos (2\pi x))_+$.

On the other hand, from the first equation in \eqref{exLP}, we see that $u\in H^1(\Tt^1)$is a regular weak solution if $u$ satisfies
\begin{equation}\label{deri}
u_x \equiv 0 \quad \mathrm{a.e.} \: \mathrm{on} \: \{0<x<1/4\}\cup\{3/4<x<1\},
\end{equation}
 and
\begin{equation}\label{derivative}
|u_x|^2 \leq -2\pi \cos(2\pi x) \quad \mathrm{a.e.} \: \mathrm{on}  \: \{1/4<x<3/4\}.
\end{equation}
For example, the functions $\tilde u$ and $\hat u$ determined by
\begin{align*}
 \tilde u_x(x)=&\sqrt{(-2\pi \cos(2\pi x))^+}\cdot \chi_{\{\frac{1}{4}<x<\frac{1}{2}\}}-\sqrt{(-2\pi \cos(2\pi x))^+}\cdot \chi_{\{\frac{1}{2}<x<\frac{3}{4}\}},
 \end{align*}
and
 \begin{align*}
 \hat u_x(x)=-\sqrt{(-2\pi \cos(2\pi x))^+}\cdot \chi_{\{\frac{1}{4}<x<\frac{3}{8}\}\cup\{\frac{1}{2}<x<\frac{5}{8}\}}+\sqrt{(-2\pi \cos(2\pi x))^+}\cdot \chi_{\{\frac{3}{8}<x<\frac{1}{2}\}\cup\{\frac{5}{8}<x<\frac{3}{4}\}},
 \end{align*}
satisfy \eqref{deri} and \eqref{derivative}.
Thus, $(\tilde u,m, \Hh)$ and $(\hat u,m, \Hh)$ are regular weak solutions.
\end{proof}
By \eqref{criterion}, we get the following result:

\begin{proposition}
Let  $(u^\epsi,m^\epsi)$ be regular weak solution of \eqref{exDP}. Suppose that $u^\epsi-\int u^\epsi dx \to \bar{u}$ as $\epsi \to 0$ weakly in $H^1(\Tt^1)$.
Let $(u,m)$ be any regular weak solution of \eqref{exLP}.
Then,
\begin{equation*}
\int_{\Tt^1} \langle \bar u\rangle m \mbox{ }dx \leq \int_{\Tt^1} \langle u\rangle m \mbox{ }dx.
\end{equation*}
\end{proposition}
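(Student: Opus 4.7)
The plan is to recognize that this proposition is a direct application of Theorem \ref{result2}, specialized to the data $d=1$, $V(x)=\pi\cos(2\pi x)$, and $g(m)=m$. The first step is to verify that the present setting falls within that framework: with $g(m)=m$ one has $g'(m)=1$, so Assumption \ref{H3} is satisfied with $\alpha=1$ (taking $c_1=c_2=1$), and the threshold condition $\alpha>0$ required when $d\leq 4$ holds trivially. Thus Theorem \ref{ws} and Proposition \ref{stabilitypro} both apply to the sequence of regular weak solutions of \eqref{exDP}.

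Next I would check that the hypotheses of Theorem \ref{result2} are in force along $(u^\epsi,m^\epsi)$. The weak $H^1$ convergence of $\langle u^\epsi\rangle$ to $\bar u$ is given by assumption. The uniform bound $\|(m^\epsi)^{(\alpha+1)/2}\|_{W^{1,2}(\Tt^1)}\leq C$ from Theorem \ref{ws}, together with the Rellich--Kondrachov theorem, yields along a subsequence the strong convergence of $m^\epsi$ in $L^p(\Tt^1)$ for $p<\tfrac{2^*}{2}(\alpha+1)$, and in particular weak $L^1$ convergence to some $\bar m$. Combined with the computation carried out in the preceding proposition of this subsection, any regular weak solution of \eqref{exLP} must have density $m(x)=(\pi\cos(2\pi x))_+$ and ergodic constant $\Hh=0$; so the limit density $\bar m$ coincides with $m$ and the ergodic constant along the sequence is forced to $0$.

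With the hypotheses checked, I would invoke Theorem \ref{result2} with the given regular weak solution $(u,m)$ of \eqref{exLP}. Conclusion \eqref{criterion} of that theorem reads exactly
\[
\int_{\Tt^1}\langle \bar u\rangle\, m\, dx \leq \int_{\Tt^1}\langle u\rangle\, m\, dx,
\]
which is the desired inequality.

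There is essentially no obstacle beyond bookkeeping, since the analytic content, namely the Mather-measure argument and the monotonicity estimate \eqref{RRR}, has already been carried out in the proof of Theorem \ref{result2}. The only point that merits an explicit remark is that the identification of $\bar m=m$ and $\Hh=0$ is automatic from the structure of \eqref{exLP}, so \eqref{criterion} applies against \emph{every} regular weak solution $(u,m)$ of \eqref{exLP}, not only against one arising as a subsequential limit of $(u^\epsi,m^\epsi)$; this is precisely what allows the inequality to discriminate between the multiple solutions $\tilde u$ and $\hat u$ exhibited in the preceding proposition.
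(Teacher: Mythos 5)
Your proposal is correct and follows exactly the paper's route: the paper proves this proposition simply by citing conclusion \eqref{criterion} of Theorem \ref{result2}, which is what you do. Your additional bookkeeping (verifying Assumption \ref{H3} with $\alpha=1$, extracting the weak $L^1$ limit of $m^\epsi$ from the bounds in Theorem \ref{ws}, and using the uniqueness of $m$ and $\Hh=0$ from the preceding proposition to see that the hypotheses of Theorem \ref{result2} hold against every regular weak solution of \eqref{exLP}) is a faithful and slightly more explicit rendering of what the paper leaves implicit.
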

Using the above criterion, we can show that $u^\epsi-\int_{\Tt^1} u^\epsi dx$ fully converges weakly in $H^1(\Tt^1)$ sense and we can detect the unique limit of $u^\epsi -\int_{\Tt^1} u^\epsi dx$, as we show now.

\begin{proposition} Let $\tilde u\in H^1(\Tt^1)$ be determined by
 \begin{align*}
 \tilde u_x=&\sqrt{(-2\pi \cos(2\pi x))^+}\cdot \chi_{\{1/4<x<1/2\}}-\sqrt{(-2\pi \cos(2\pi x))^+}\cdot \chi_{\{1/2<x<3/4\}},
 \end{align*}
 and $\tilde u(0)=0$. Then $\tilde u$ is the unique minimizer of $ \int_{\Tt^1} \langle u\rangle m \mbox{ } dx$ over all weak solutions $u$ of \eqref{exLP}.
 \end{proposition}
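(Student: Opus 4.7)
The plan is to exploit the rigid structure of regular weak solutions of \eqref{exLP}, established in the previous proposition, to reduce the minimization to a one-dimensional constrained calculus of variations problem on the interval $(1/4,3/4)$, and then identify $\tilde u$ as the unique optimal profile via a pointwise comparison argument.

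First I would apply the characterization from the previous proposition: every regular weak solution $u\in H^1(\Tt^1)$ satisfies $u_x\equiv 0$ a.e.\ on $(0,1/4)\cup(3/4,1)$ and $|u_x|^2\le -2\pi\cos(2\pi x)$ a.e.\ on $(1/4,3/4)$. Since $H^1(\Tt^1)\hookrightarrow C(\Tt^1)$ and the set $\{m>0\}$ is a \emph{connected} arc on the torus, $u$ must equal a single constant $c$ throughout this arc. Writing $w(x)=u(x)-c$ on $[1/4,3/4]$, continuity gives $w(1/4)=w(3/4)=0$ and the gradient bound $|w_x|^2\le -2\pi\cos(2\pi x)$ persists. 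Using $\int_{\Tt^1}m\,dx=1$ and $u\equiv c$ on $\{m>0\}$, a direct computation yields
\begin{equation*}
\int_{\Tt^1}\langle u\rangle m\,dx \;=\; c-\int_{\Tt^1}u\,dx \;=\; -\int_{1/4}^{3/4}w(x)\,dx,
\end{equation*}
which depends only on $w$ (confirming the expected invariance under $u\mapsto u+\mathrm{const}$), so minimizing the cost is equivalent to maximizing $\int_{1/4}^{3/4}w\,dx$.

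Next I would establish the sharp pointwise bound $w(x)\le\tilde u(x)$ on $(1/4,3/4)$. Absolute continuity of one-dimensional Sobolev functions together with the gradient constraint gives
\begin{equation*}
w(x) \;\le\; \int_{1/4}^{x}\sqrt{-2\pi\cos(2\pi t)}\,dt \quad\text{and}\quad w(x) \;\le\; \int_{x}^{3/4}\sqrt{-2\pi\cos(2\pi t)}\,dt.
\end{equation*}
By the symmetry of $\cos(2\pi t)$ about $t=1/2$, the minimum of these two upper bounds coincides pointwise with $\tilde u$: it equals the first integral on $(1/4,1/2)$, matching $\tilde u_x=+\sqrt{-2\pi\cos(2\pi x)}$, and the second on $(1/2,3/4)$, matching $\tilde u_x=-\sqrt{-2\pi\cos(2\pi x)}$. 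Integrating, $-\int_{1/4}^{3/4}w\,dx\ge -\int_{1/4}^{3/4}\tilde u\,dx$, so $\tilde u$ is a minimizer; equality forces $w\equiv\tilde u$ almost everywhere, whence $u=c+\tilde u$, and the normalization $\tilde u(0)=0$ fixes the representative within the admissible family $\{\tilde u+c:c\in\Rr\}$.

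The main subtle point is reconciling the uniqueness claim with the manifest additive-constant invariance of the functional. This is resolved because, once the problem is reduced to the profile $w$, pointwise saturation of the bound at \emph{every} point of $(1/4,3/4)$ is forced by the equality case of the integral inequality, determining $w$ uniquely; the normalization $\tilde u(0)=0$ then singles out a unique $u$. No results from outside this example are required beyond the characterization of regular weak solutions supplied by the preceding proposition.
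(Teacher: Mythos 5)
Your proof is correct and is essentially the paper's own argument: both reduce the functional to $-\int_{\Tt^1} u\,dx$ using that $u$ is constant on $\{m>0\}$ (the paper via the normalization $u(0)=0$, you via the explicit shift $w=u-c$), and both establish the pointwise domination $u\le \tilde u+c$ on $(1/4,3/4)$ by integrating the gradient constraint \eqref{derivative} from the two endpoints. The only cosmetic differences are that you phrase the bound as a minimum of two envelopes $\min\bigl(\int_{1/4}^{x}\sqrt{-2\pi\cos(2\pi t)}\,dt,\ \int_{x}^{3/4}\sqrt{-2\pi\cos(2\pi t)}\,dt\bigr)=\tilde u(x)$, where the paper instead argues by contradiction on $(1/2,3/4)$, and that you spell out the equality case and the additive-constant normalization slightly more explicitly.
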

 
 \begin{proof}
 Let $u$ be any regular weak solution of \eqref{exLP}. Because the quantities $ \int_{\Tt^1} \langle u\rangle m \mbox{ }dx$ is invariant by addition of a constant to $u$, we can assume $u(0)=\tilde u(0)=0$, without loss of generality. Moreover, because of \eqref{deri} and by periodicity, we have $u(x)=\tilde u(x)=0$ for $x\in [0,1/4]\cup[3/4,1]$. Then,
 \begin{align*}
 \int_{\Tt^1} \langle u\rangle \mbox{ d} m
 &=\int_{\Tt^1} um\mbox{ }dx-\left(\int_{\Tt^1} u\mbox{ }dx \right) \left(\int_{\Tt^1} m\mbox{ }dx \right)\\
 &=\int_{[1/4,3/4]} um\mbox{ }dx-\int_{\Tt^1} u\mbox{ }dx=-\int_{\Tt^1} u\mbox{ }dx.
 \end{align*}
 Hence, it suffices to discuss the quantities of $-\int_{\Tt^1} u\mbox{ }dx$.
 
 Because of \eqref{derivative}, we can see that $\tilde u(x) \geq u(x)$ in $x\in [1/4,1/2]$. On the other hand, it holds that  $\tilde u(x) \geq u(x)$ in $x\in [1/2,3/4]$. Indeed, suppose that there exists $x_0 \in [1/2,3/4]$ and solution $u_0$ such that $\tilde u(x_0) < u_0(x_0)$. Then, it follows from \eqref{derivative} that $u_0(3/4)>\tilde u(3/4)=0$, which is a contradiction.
 Thus, $\tilde u(x)\geq u(x)$ for $x\in \Tt^1$ and we see $\int_{\Tt^1}\tilde u\mbox{ }dx\geq \int_{\Tt^1}u\mbox{ }dx$.
 \end{proof}


\bibliographystyle{plain}

\IfFileExists{"/Users/gomesd/mfgDGOFFICE.bib"}{\bibliography{/Users/gomesd/mfgDGOFFICE.bib}}{\bibliography{mfg.bib}}

\end{document}